\numberwithin{equation}{section}
\declaretheoremstyle[
  bodyfont=\normalfont\itshape,
  headformat=\NAME\ \NUMBER\NOTE,
]{myplain}
\declaretheoremstyle[
  headformat=\NAME\ \NUMBER\NOTE,
]{mydefinition}
\newcommand{\envqed}{{\lower-0.3ex\hbox{$\triangleleft$}}}
\declaretheorem[style=myplain,numberwithin=section]{theorem}
\declaretheorem[style=mydefinition,numberlike=theorem,qed=\envqed]{definition}
\declaretheorem[style=mydefinition,numberlike=theorem,qed=\envqed]{remark}
\let\epsilon\varepsilon
\let\phi\varphi
\let\rho\varrho
\newcommand{\equalcontrib}{\textsuperscript{$\star$}}
\newcommand{\orcid}[1]{ORCID:~\href{https://orcid.org/#1}{#1}}
\newenvironment{keywords}{\par\textbf{Key words.}}{\par}
\newenvironment{AMS}{\par\textbf{AMS subject classification.}}{\par}
\title{GPU-Accelerated Energy-Conserving Methods for the Two-Dimensional Hyperbolized Serre--Green--Naghdi Equations}
\author[1,2]{Collin~Wittenstein\equalcontrib\thanks{\orcid{0009-0006-8591-278X}}}
\affil[1]{Department of Electrical Engineering and Computer Science, Massachusetts Institute of Technology, Cambridge, MA 02139, USA}
\affil[2]{Institute of Mathematics, Johannes Gutenberg University Mainz, 55099 Mainz, Germany}
\author[2]{Vincent~Marks\equalcontrib\thanks{\orcid{0009-0007-4973-5074}}}
\author[3]{Mario Ricchiuto\thanks{\orcid{0000-0002-1679-7339}}}
\affil[3]{INRIA, U. Bordeaux, CNRS, Bordeaux INP, IMB, UMR 5251, 200 Av. de la Vieille Tour, 33400 Talence, France}
\author[2]{Hendrik~Ranocha\thanks{\orcid{0000-0002-3456-2277}}}
\date{May 26, 2026}
\begin{document}
\maketitle

{\let\thefootnote\relax\footnotetext{$\star$These authors contributed equally to this work.}}

\begin{abstract}
  \noindent
  We develop energy-conserving numerical methods for a two-dimensional hyperbolic
approximation of the Serre--Green--Naghdi equations with variable bathymetry
and either periodic or reflecting boundary conditions. The hyperbolic
formulation avoids the costly inversion of an elliptic operator present in the
classical model. Our schemes combine split forms with summation-by-parts (SBP)
operators to construct semi-discretizations that conserve the total water mass
and the total energy. We provide analytical proofs of these conservation
properties and also verify them numerically. While the framework is general,
our implementation focuses on second-order finite-difference SBP operators. The
methods are implemented in Julia for CPU and GPU architectures (AMD and NVIDIA)
and achieve substantial speedups on modern accelerators. We validate the
approach through convergence studies based on solitary-wave and
manufactured-solution tests, and by comparisons to analytical, experimental,
and existing numerical results. All source code to reproduce our results is
available online.

\end{abstract}

\begin{keywords}
  summation-by-parts operators,
  dispersive wave equations,
  finite difference methods,
  structure-preserving methods,
  energy-conserving methods,
  entropy-stable methods
\end{keywords}

\begin{AMS}
  65M06, 
  65M12,  
  65M20 
\end{AMS}

\section{Introduction}

Dispersive free-surface waves are a fundamental feature of coastal and riverine
hydrodynamics. They arise in diverse settings, ranging from tsunami
propagation~\cite{nhess-13-1507-2013} to wave transformation over complex
bathymetry in estuaries and engineered
channels~\cite{Treske1994,Chassagne_Filippini_Ricchiuto_Bonneton_2019,JOUY2024170}.
To capture these effects in practical large-scale simulations, many operational
hazard-assessment codes rely on depth-averaged Boussinesq-type
models~\cite{Kazolea2024FullNI}. Within this class of equations, the
Serre--Green--Naghdi (SGN)
system~\cite{Serre1953,GN76} is of particular
interest, as it combines full nonlinearity with weakly dispersive
characteristics.

The aim of this work is to develop structure-preserving numerical schemes in
two space dimensions for the SGN equations with variable bathymetry,
accommodating both periodic and reflecting boundary conditions. For clarity, we
summarize the main physical variables used throughout this work in
Table~\ref{table:notation} and visualize them in
Figure~\ref{fig:Sketch_variables}.

\begin{figure}[htbp]
  \centering
  \includegraphics[width=0.6\textwidth]{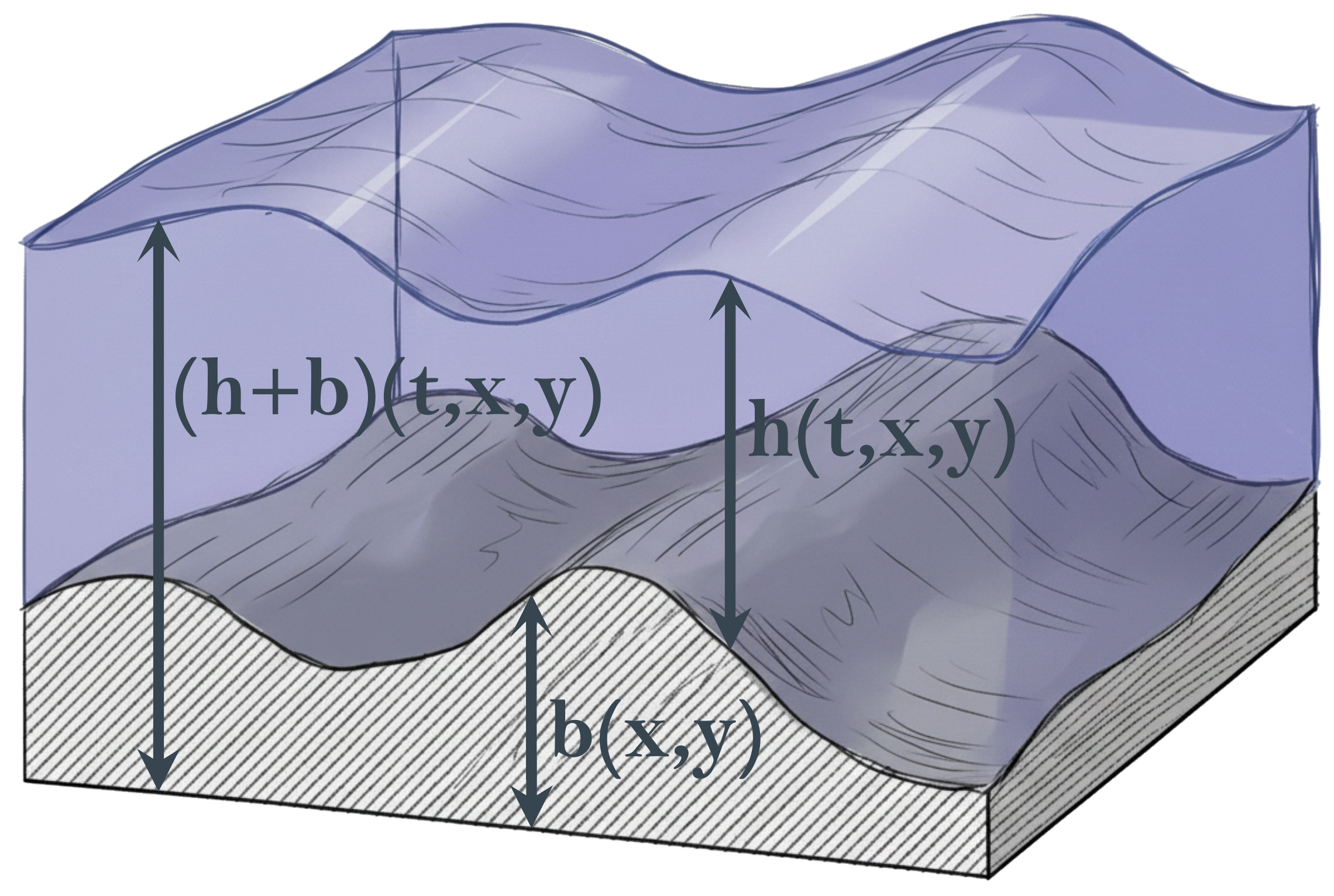}
  \caption{Sketch of the variables: total water height $h(t,x,y) + b(x,y)$, water height above the bathymetry $h(t,x,y)$, and bathymetry $b(x,y)$.}
  \label{fig:Sketch_variables}
\end{figure}

\begin{table}[!htbp]
  \centering
  \caption{Notation used in this work.}
  \label{table:notation}
  \begin{tabular}{ll}
    \toprule
    \textbf{Symbol} & \textbf{Description}                          \\
    \midrule
    $h + b$         & total water height (free-surface elevation)   \\
    $h$             & water height above the bathymetry             \\
    $b$             & bathymetry                                    \\
    $u, v$          & velocity components in $x$- and $y$-direction \\
    $g$             & gravitational acceleration                    \\
    \bottomrule
  \end{tabular}
\end{table}

Two main formulations of the SGN equations have been employed in the literature
for numerical treatment: the classical form (equation \eqref{eq:2D_classic_SGN} below)
involving the inversion of a nonlinear elliptic operator~\cite{Maria_article,gavrilyuk20242d}
and a hyperbolic approximation (equation
\eqref{eq:2D_hyperbolic_SGN} below)~\cite{Favrie_2017,Busto2021}, see also
\cite{guermond2022well,guermond2022hyperbolic}.
Such hyperbolizations can be implemented efficiently on modern hardware
such as GPUs and have been used in the context of other dispersive wave
models as well~\cite{escalante2019efficient}.
In this work, we consider the hyperbolization of~\cite{Busto2021}, which
includes a relaxation parameter~$\lambda$ and converges to the original
SGN system as $\lambda \to \infty$. It offers computational advantages on
relatively coarse meshes by avoiding the expensive solution of an elliptic
problem at each time step.

While the energy of the (hyperbolized) Serre--Green--Naghdi system is
conserved for smooth solutions, methods from the realm of hyperbolic
conservation laws suggest relaxing this requirement to energy stability
as a generalization of entropy stability~\cite{parisot2019entropy,noelle2022class}.
Some recent works have shown that numerical dissipation can lead to significant
underestimation of wave amplitudes on coarse meshes~\cite{JOUY2024170,ranocha2025structure}.
As long as only wave propagation is considered, without wave breaking or wetting and drying effects for which
energy dissipation is required (see, e.g., \cite{Kazolea_Ricchiuto_2018})
and positivity-preserving methods play an important role, discrete energy preservation
has very appealing features.
In this work, we focus
on energy-conserving numerical methods, similar to recent developments for
other dispersive wave models~\cite{antonopoulos2025bona,lampert2025structure,mitsotakis2021conservative}.

To construct numerical methods that conserve the energy at the semi-discrete
level, we utilize the framework of summation-by-parts (SBP) operators (see for
example~\cite{svard2014review,fernandez2014review}) and split
forms~\cite{FISHER2013353}. This approach enables a systematic derivation of
energy-conserving semi-discretizations by mimicking the properties employed at
the continuous level: SBP operators mimic integration by parts, while split
forms mimic the product and chain rules. Compared to existing work on
energy-conserving methods for the SGN equations~\cite{ranocha2025structure}, we
extend the methodology to the fully two-dimensional SGN system with variable
bathymetry and reflecting boundary conditions. For our implementation, we
mainly rely on the Julia packages
OrdinaryDiffEq.jl~\cite{DifferentialEquations.jl-2017} and
KernelAbstractions.jl~\cite{Churavy_KernelAbstractions_jl}, enabling
vendor-agnostic simulations on both CPU and GPU platforms.

The remainder of this article is organized as follows. In
Section~\ref{sect:split-sbp}, we review the techniques of split forms and SBP
operators that form the foundation of our spatial discretization.
Section~\ref{sect:sgn-review} provides an overview of the SGN equations in both
classical and hyperbolic approximation forms, along with their energy
conservation laws. In Section~\ref{sect:energy-conserving}, we develop
energy-conserving semi-discretizations for the two-dimensional hyperbolic SGN
system with variable bathymetry, treating both periodic and reflecting boundary
conditions.
In Section~\ref{sect:computing_solitary_waves}, we describe how we compute solitary wave solutions numerically.
We validate our implementation and present numerical experiments in
Section~\ref{sec:numerical_experiments}, including convergence studies, comparisons
with experimental data, and performance benchmarks across different hardware
platforms. Finally, we summarize our results and provide an outlook on future
work in Section~\ref{sect:conclusions}.

\section{Review of Split Forms and Summation-by-Parts Operators} \label{sect:split-sbp}

In this section, we review the techniques employed for the spatial
discretization of the SGN equations. We utilize split forms combined with
SBP operators to guarantee conservation of a discrete
energy. Following the method of lines approach, we first discretize in space
and subsequently perform the time integration.

\subsection{Split Forms}

In general, discrete derivative operators do not satisfy discrete analogs of
the product and chain rules~\cite{Ranocha_2018}. To address this limitation, we
employ split forms~\cite{FISHER2013353} that mimic those, thereby enabling us
to prove energy conservation solely through integration by parts.

For example, consider the classical one-dimensional shallow water equations with constant
bathymetry
\begin{equation} \label{eq:shallow_water}
  \begin{aligned}
     & h_t + (hu)_x = 0,                                   \\
     & (hu)_t + \left(hu^2 + \frac{1}{2}gh^2\right)_x = 0.
  \end{aligned}
\end{equation}
with $h$ the depth, $u$ the velocity, and $g$ the gravitational acceleration.
For smooth solutions, these equations satisfy the energy conservation law
\begin{equation}
  \biggl(\underbrace{\frac{1}{2}gh^2 + \frac{1}{2}hu^2}_{=E}\biggr)_t + \biggl(\underbrace{gh^2u + \frac{1}{2}hu^3}_{=F}\biggr)_x = 0.
\end{equation}
A split form of equation~\eqref{eq:shallow_water} is given by~\cite{ranocha2025structure,ranocha2017shallow}
\begin{equation}
  \begin{aligned}
     & h_t + h_x u + h u_x = 0,                                                                                         \\
     & hu_t + g(h^2)_x - ghh_x + \frac{1}{2}h(u^2)_x - \frac{1}{2}h_x u^2 + \frac{1}{2}(hu)_x u - \frac{1}{2}huu_x = 0.
  \end{aligned}
\end{equation}
Using integration by parts, we can verify that the energy is conserved for periodic or reflecting boundary conditions ($u|_{\partial\Omega} = 0$) as follows:
\begin{equation}
  \label{eq:energy_conservation_shallow_water}
  \begin{aligned}
    \frac{\dif}{\dif t} \int E \dif x
     & = \frac{\dif}{\dif t} \int \left(\frac{1}{2}gh^2 + \frac{1}{2}hu^2\right) \dif x                                                              \\
     & = \int \left(ghh_t + \frac{1}{2}h_t u^2 + huu_t\right) \dif x                                                                          \\
     & = \int \left[gh\left(-h_x u - hu_x\right) + \frac{1}{2}u^2\left(-h_x u - hu_x\right)\right] \dif x                                     \\
     & \quad - \int u\left[g(h^2)_x - ghh_x + \frac{1}{2}h(u^2)_x - \frac{1}{2}h_x u^2 + \frac{1}{2}(hu)_x u - \frac{1}{2}huu_x\right] \dif x \\
     & = \int \Bigl(-ghh_x u + ghh_x u - gh^2u_x - gu(h^2)_x - \frac{1}{2}u^3h_x + \frac{1}{2}h_x u^3                                       \\
     & \qquad - \frac{1}{2}u^2hu_x + \frac{1}{2}hu^2u_x - \frac{1}{2}hu(u^2)_x - \frac{1}{2}u^2(hu)_x\Bigr) \dif x
     = 0.
  \end{aligned}
\end{equation}

\subsection{Summation-by-Parts Operators}

SBP operators are discrete derivative operators that mimic
integration by parts and thereby enable the transfer of analytical results from
the continuous setting to discrete settings. An SBP operator consists of a
derivative operator and a mass matrix $M$ that defines a quadrature rule.
Originally proposed in the finite difference context~\cite{Kreiss1974,strand1994summation,carpenter1994time}, SBP
operators have a broad range of applications, including finite volume
\cite{Nordstrom2001}, continuous finite element~\cite{Hicken2016,hicken2020entropy,abgrall2020analysisI},
discontinuous Galerkin~\cite{Gassner_2013,carpenter2014entropy},
flux reconstruction~\cite{huynh2007flux,vincent2011newclass,ranocha2016summation},
active flux methods~\cite{eymann2011active,barsukow2025stability},
and meshfree schemes~\cite{hicken2025constructing,kwan2025robust}.

The definitions and notations described in this section are standard
in the SBP literature, e.g.,~\cite{svard2014review,fernandez2014review,Ranocha_2021}.
Here, we focus on one-dimensional SBP operators and extend them to multiple
dimensions via tensor products as usual for finite difference methods.

We discretize the spatial domain $[x_\mathrm{min}, x_\mathrm{max}]$ using a nodal collocation method.
Thus, we first introduce the grid $\boldsymbol{x} =
  (\boldsymbol{x}_1,\ldots,\boldsymbol{x}_\mathcal{N})^T$, where $x_\mathrm{min} = \boldsymbol{x}_1
  \leq \boldsymbol{x}_2 \leq \ldots \leq \boldsymbol{x}_\mathcal{N} = x_\mathrm{max}$.
The discrete representation $\boldsymbol{u}$ of a function
$u$ consists of its values at the grid points, i.e., $\boldsymbol{u}_i =
  u(\boldsymbol{x}_i)$.
Nonlinear operations such as $(\boldsymbol{u}^2)_i = \boldsymbol{u}_i^2$ are
evaluated componentwise.
Moreover, $\boldsymbol{1} = (1, \ldots, 1)^T$ denotes the vector of ones.
Finally, we use the notation
$\boldsymbol{e}_L=(1,0, \ldots, 0)^T$, $\boldsymbol{e}_R=(0, \ldots, 0,1)^T$.

\begin{definition}
  Given a grid $\boldsymbol{x}$, a $p$-th order accurate $i$-th derivative matrix $D^{(i)}$ is a matrix that satisfies
  \begin{equation}
    \forall k \in\{0, \ldots, p\}: \quad D^{(i)} \boldsymbol{x}^k=k(k-1) \ldots(k-i+1) \boldsymbol{x}^{k-i},
  \end{equation}
  with the convention $\boldsymbol{x}^0=\boldsymbol{1}$ and $0 \boldsymbol{x}^k=\boldsymbol{0}$. We say $D^{(i)}$ is consistent if $p \geq 0$.
\end{definition}

\begin{definition}\label{def:singledim_sbp}
  A first-derivative SBP operator consists of a grid $\boldsymbol{x}$, a consistent first-derivative matrix $D$, and a symmetric and positive-definite matrix $M$ such that
  \begin{equation}
    M D+D^T M=\boldsymbol{e}_R \boldsymbol{e}_R^T-\boldsymbol{e}_L \boldsymbol{e}_L^T.
  \end{equation}
  We refer to $M$ as a mass matrix or norm matrix. In the periodic case, we define $\boldsymbol{e}_{L/R}= \boldsymbol{0}$ such that
  \begin{equation}
    M D+D^T M=0.
  \end{equation}
\end{definition}

This definition yields a discrete version of integration by parts as
\begin{equation}
  \begin{aligned}
    \underbrace{\boldsymbol{u}^T M D \boldsymbol{v}+\boldsymbol{u}^T D^T M \boldsymbol{v}}_{\rotatebox{90}{$\approx$}}
     & =
    \underbrace{\boldsymbol{u}^T \boldsymbol{e}_R \boldsymbol{e}_R^T \boldsymbol{v}-\boldsymbol{u}^T \boldsymbol{e}_L \boldsymbol{e}_L^T \boldsymbol{v}}_{\rotatebox{90}{$\approx$}}
    \\
    \overbrace{\int_{x_{\min}}^{x_{\max}} u\,(\partial_x v)\,\dif x
      + \int_{x_{\min}}^{x_{\max}} (\partial_x u)\,v\,\dif x}
     & =
    \overbrace{u(x_{\max})\,v(x_{\max}) - u(x_{\min})\,v(x_{\min})}.
  \end{aligned}
\end{equation}

Since we are dealing with a two-dimensional problem, we also need to consider
SBP operators in multiple dimensions. Following~\cite{Ranocha2017ComparisonOS},
we provide the following definition:

\begin{definition}[Multidimensional SBP operator]\label{def:multidim_sbp}
  An SBP operator on a $d$-dimensional element $\Omega$ with order of accuracy $p \in \mathbb{N}$ consists of the following components:
  \begin{itemize}
    \item \textbf{Derivative operators} $D_j$, $j \in \{1, \ldots, d\}$, approximating the partial derivative in the $j$-th coordinate direction. These are required to be exact for polynomials of degree $\leq p$.

    \item \textbf{Mass matrix} $M$, approximating the $L_2$ scalar product on $\Omega$ via
          \begin{equation}
            \boldsymbol{u}^T M \boldsymbol{v} = \langle\boldsymbol{u}, \boldsymbol{v}\rangle_M \approx \langle u, v \rangle_{L_2(\Omega)} = \int_{\Omega} u v\,\dif x,
          \end{equation}
          where $u, v$ are functions on $\Omega$ and $\boldsymbol{u}, \boldsymbol{v}$ their approximations in the SBP basis (i.e., their projections onto the grid).

    \item \textbf{Restriction operator} $R$, performing interpolation of functions from the volume $\Omega$ to the boundary $\partial\Omega$.

    \item \textbf{Boundary mass matrix} $B$, approximating the $L_2$ scalar product on $\partial\Omega$ via
          \begin{equation}
            \boldsymbol{u}_B^T B \boldsymbol{v}_B = \langle\boldsymbol{u}_B, \boldsymbol{v}_B\rangle_B \approx \langle u_B, v_B \rangle_{L_2(\partial\Omega)} = \int_{\partial\Omega} u_B v_B\,\dif s,
          \end{equation}
          where $u_B, v_B$ are functions on $\partial\Omega$ and $\boldsymbol{u}_B, \boldsymbol{v}_B$ their approximations in the SBP basis.

    \item \textbf{Multiplication operators} $N_j$, $j \in \{x, y, z, \ldots\}$, performing multiplication of functions on the boundary $\partial\Omega$ with the $j$-th component $n_j$ of the outer unit normal vector. Thus, if $\boldsymbol{u}$ is the approximation of a function $u|_{\Omega}$ in the SBP basis, then $R\boldsymbol{u}$ is the approximation of $u|_{\partial\Omega}$ on the boundary, and $N_j R \boldsymbol{u}$ is the approximation of $n_j u|_{\partial\Omega}$, where $n_j$ is the $j$-th component of the outer unit normal at $\partial\Omega$.

    \item The restriction and boundary operators satisfy
          \begin{equation}
            \boldsymbol{u}^T R^T B N_j R \boldsymbol{v} \approx \int_{\partial\Omega} u v n_j\,\dif s,
          \end{equation}
          where $n_j$ is the $j$-th component of the outer unit normal vector $n$, and this approximation has to be exact for polynomials of degree $\leq p$.

    \item Finally, the \textbf{SBP property}
          \begin{equation}
            M D_j + D_j^T M = R^T B N_j R
          \end{equation}
          has to be fulfilled, mimicking the divergence theorem on a discrete level
          \begin{equation}
            \begin{aligned}
               & \int_{\Omega} u\,(\partial_j v)\,\dif x + \int_{\Omega} (\partial_j u)\,v\,\dif x                                                           \\
               & \quad \approx \boldsymbol{u}^T M D_j \boldsymbol{v} + \boldsymbol{u}^T D_j^T M \boldsymbol{v}
               = \boldsymbol{u}^T R^T B N_j R \boldsymbol{v}
              \approx \int_{\partial\Omega} u v n_j\,\dif s.
            \end{aligned}
          \end{equation}
  \end{itemize}
  In one space dimension, the indices of the derivative and multiplication operators $D_x$, $N_x$ are typically dropped. Furthermore, the boundary matrix is the $2 \times 2$ identity matrix $B = \operatorname{diag}(1, 1)$, and multiplication with the outer normal is given by $N = \operatorname{diag}(-1, 1)$, i.e.,
  $B N = \boldsymbol{e}_R \boldsymbol{e}_R^T - \boldsymbol{e}_L \boldsymbol{e}_L^T$.
\end{definition}

Further information about SBP operators in multidimensional settings can be
found in~\cite{Hicken2016,Ranocha_MastersThesis_2016}. The concrete operators employed in this work are specified in Remark~\ref{rem:concrete_sbp}.

\section{Review of the Serre--Green--Naghdi Equations} \label{sect:sgn-review}

In this section, we review the two-dimensional SGN
equations in both their classical form and their hyperbolic approximation
with variable bathymetry. For each formulation, we present the corresponding
energy conservation law.

\subsection{Classical Form with Variable Bathymetry}

The classical two-dimensional SGN equations with variable bathymetry~\cite{gavrilyuk20242d} read with similar notation to \eqref{eq:shallow_water}
\begin{equation}
  \label{eq:2D_classic_SGN}
  \begin{aligned}
    & h_t + \frac{\partial (hu)}{\partial x} + \frac{\partial (hv)}{\partial y} = 0, \\
    & (hu)_t + \frac{\partial}{\partial x}\left(hu^2
      + \frac{g h^2}{2} + \frac{h^2}{3}\left(\ddot{h}
      + \frac{3}{2} \ddot{b}\right)\right) + \frac{\partial (huv)}{\partial y}
      + \left(g h + h\left(\ddot{b} + \frac{1}{2} \ddot{h}\right)\right) \frac{\partial b}{\partial x} = 0, \\
    & (hv)_t + \frac{\partial (hvu)}{\partial x} + \frac{\partial}{\partial y}\left(hv^2
      + \frac{g h^2}{2} + \frac{h^2}{3}\left(\ddot{h}
      + \frac{3}{2} \ddot{b}\right)\right)
      + \left(g h + h\left(\ddot{b} + \frac{1}{2} \ddot{h}\right)\right) \frac{\partial b}{\partial y} = 0,
  \end{aligned}
\end{equation}
where now $(u,v)$ denote the $x$ and $y$ components of the velocity, and $b$ is the bathymetry level.
Here, the dot notation denotes the material derivative along the velocity
\begin{equation}
  \dot{f} = \frac{\partial f}{\partial t} + u \frac{\partial f}{\partial x}
          + v \frac{\partial f}{\partial y}
\end{equation}
for any scalar function $f(t,x,y)$, and two dots denote the
corresponding second material derivative.
For $b_t = 0$, the system satisfies the energy conservation law
\begin{equation}
  (h \mathcal{E})_t + \frac{\partial}{\partial x}\left(\left(h \mathcal{E}
+ \frac{g h^2}{2} + \frac{h^2}{3}\left(\ddot{h}
+ \frac{3}{2} \ddot{b}\right)\right) u\right)
+ \frac{\partial}{\partial y}\left(\left(h \mathcal{E}
+ \frac{g h^2}{2} + \frac{h^2}{3}\left(\ddot{h}
+ \frac{3}{2} \ddot{b}\right)\right) v\right) = 0
\end{equation}
with
\begin{equation}
  \mathcal{E} = \frac{u^2 + v^2}{2} + g\left(\frac{h}{2} + b\right)
    + \frac{1}{6}\left(\dot{h} + \frac{3}{2} \dot{b}\right)^2
    + \frac{1}{8} \dot{b}^2.
\end{equation}

\subsection{Hyperbolic Approximation with Variable Bathymetry}
The classical SGN equations require solving a time-independent elliptic system at each time step to obtain
the velocity at the new time level~\cite{gavrilyuk20242d}. This
introduces considerable overhead and is not well suited for certain implementations,
notably on GPUs. An alternative approach is
to use a hyperbolic approximation of the SGN equations~\cite{Favrie_2017}.
In this reference, the authors propose a new model obtained
by perturbing the Lagrangian of the original SGN equations, following the so-called
augmented Lagrangian method. The resulting Lagrangian involves a larger number of unknowns,
but the corresponding Euler--Lagrange equations constitute a hyperbolic PDE system.
The variational nature of this system also guarantees the existence of an energy conservation law
for an augmented version of the SGN energy.
Previous work has derived one- and two-dimensional hyperbolic approximations of the SGN
equations~\cite{Busto2021,Favrie_2017} and rigorously established their
properties~\cite{Duch_ne_2019}.

%

The two-dimensional hyperbolic SGN equations with variable bathymetry introduced in~\cite{Busto2021} can be written as
\begin{equation}
  \label{eq:2D_hyperbolic_SGN}
  \begin{aligned}
    \frac{\partial h}{\partial t} + \frac{\partial (hu)}{\partial x} + \frac{\partial (hv)}{\partial y}                                                                                                                 & = 0,                                      \\
    \frac{\partial (hu)}{\partial t} + \frac{\partial}{\partial x}\left(hu^2 + \frac{1}{2}gh^2 + hp\right) + \frac{\partial (huv)}{\partial y} + \left(gh + \frac{3}{2}\frac{h}{\eta}p\right)\frac{\partial b}{\partial x} & = 0,                                      \\
    \frac{\partial (hv)}{\partial t} + \frac{\partial (hvu)}{\partial x} + \frac{\partial}{\partial y}\left(hv^2 + \frac{1}{2}gh^2 + hp\right) + \left(gh + \frac{3}{2}\frac{h}{\eta}p\right)\frac{\partial b}{\partial y} & = 0,                                      \\
    \frac{\partial (h\eta)}{\partial t} + \frac{\partial (h\eta u)}{\partial x} + \frac{\partial (h\eta v)}{\partial y} + \frac{3}{2}h\left(u\frac{\partial b}{\partial x} + v\frac{\partial b}{\partial y}\right)        & = hw,                                     \\
    \frac{\partial (hw)}{\partial t} + \frac{\partial (hwu)}{\partial x} + \frac{\partial (hwv)}{\partial y}                                                                                                            & = \lambda\left(1 - \frac{\eta}{h}\right), \\
    \frac{\partial b}{\partial t}                                                                                                                                                                                     & = 0,
  \end{aligned}
\end{equation}
where
\begin{equation}
  p(h, \eta) = \frac{\lambda}{3}\frac{\eta}{h}\left(1 - \frac{\eta}{h}\right).
\end{equation}
This system satisfies the energy conservation law
\begin{equation}
  \label{eq:energy_conservation_2D_hyperbolic_SGN}
  \frac{\partial E}{\partial t}
  + \frac{\partial}{\partial x}\left(\left(E + P\right) u\right)
  + \frac{\partial}{\partial y}\left(\left(E + P\right) v\right) = 0
\end{equation}
with
\begin{equation}
  \begin{aligned}
    E          & = h\left(\frac{u^2 + v^2}{2} + e\right), \\
    e          & = \frac{1}{6}w^2 + \frac{g}{2}(h + 2b) + \frac{\lambda}{6}\left(\frac{\eta}{h} - 1\right)^2, \\
    P          & = \frac{gh^2}{2} + \frac{\lambda}{3}\eta\left(1 - \frac{\eta}{h}\right).
  \end{aligned}
\end{equation}
In the limit $\lambda \to \infty$, we (formally) obtain
\begin{equation}
  \eta = h, \quad w = -h \cdot (\partial_x u + \partial_y v) + \frac{3}{2} \cdot (u\partial_x b + v\partial_y b).
\end{equation}
\begin{remark}
Note that the hyperbolic approximation presented here is based on
the mild-slope approximation. In~\cite{Busto2021}, the augmented
Lagrangian used to derive the hyperbolic formulation is simplified
by assuming time-independent bathymetry ($b_t = 0$) and small gradients of $b$.
See also~\cite{ranocha2025structure} for a discussion and an alternative formulation.
\end{remark}

\section{Energy-Conserving Methods for the 2D Hyperbolization} \label{sect:energy-conserving}
In this section, we extend existing energy-conserving methods for the SGN
equations in one space dimension~\cite{ranocha2025structure} to the two-dimensional
hyperbolic SGN equations with variable bathymetry for periodic and reflecting
boundary conditions.

\subsection{Periodic Boundary Conditions}

For periodic boundary conditions, we propose the semi-discretization
\begin{equation} \label{eq:sd_periodic_2D_SGN}
   \begin{aligned}
     & \boldsymbol{h}_t + \boldsymbol{u}D_x \boldsymbol{h} + \boldsymbol{h}D_x\boldsymbol{u} + \boldsymbol{v}D_y \boldsymbol{h} + \boldsymbol{h}D_y\boldsymbol{v} = \boldsymbol{0},                                                                                                                                                        \\[0.5em]
     & \boldsymbol{h}\boldsymbol{u}_t + gD_x\left(\boldsymbol{h}(\boldsymbol{h}+\boldsymbol{b})\right) - g(\boldsymbol{h}+\boldsymbol{b})D_x \boldsymbol{h} + \frac{1}{2}\boldsymbol{h}D_x(\boldsymbol{u}^2) - \frac{1}{2}\boldsymbol{u}^2 D_x \boldsymbol{h} + \frac{1}{2}\boldsymbol{u}D_x(\boldsymbol{h}\boldsymbol{u}) - \frac{1}{2}\boldsymbol{h}\boldsymbol{u}D_x \boldsymbol{u}                                                                                                       \\
     & \quad + \frac{1}{2}D_y(\boldsymbol{h}\boldsymbol{u}\boldsymbol{v}) - \frac{1}{2} \boldsymbol{u}\boldsymbol{v}D_y\boldsymbol{h} + \frac{1}{2} \boldsymbol{h}\boldsymbol{v}D_y \boldsymbol{u} - \frac{1}{2} \boldsymbol{h}\boldsymbol{u}D_y \boldsymbol{v}                                                                                                            \\
     & \quad + \frac{\lambda}{6}\frac{\boldsymbol{\eta}^{2}}{\boldsymbol{h}^{2}}D_x \boldsymbol{h} + \frac{\lambda}{3}D_x \boldsymbol{\eta} - \frac{\lambda}{3}\frac{\boldsymbol{\eta}}{\boldsymbol{h}}D_x \boldsymbol{\eta} - \frac{\lambda}{6}D_x\left(\frac{\boldsymbol{\eta}^{2}}{\boldsymbol{h}}\right) + \frac{\lambda}{2}\left(1-\frac{\boldsymbol{\eta}}{\boldsymbol{h}}\right)D_x \boldsymbol{b} = \boldsymbol{0}, \\[0.5em]
     & \boldsymbol{h}\boldsymbol{v}_t + gD_y\left(\boldsymbol{h}(\boldsymbol{h}+\boldsymbol{b})\right) - g(\boldsymbol{h}+\boldsymbol{b})D_y\boldsymbol{h} + \frac{1}{2}\boldsymbol{h}D_y(\boldsymbol{v}^2) - \frac{1}{2}\boldsymbol{v}^2 D_y \boldsymbol{h} + \frac{1}{2}\boldsymbol{v}D_y(\boldsymbol{h}\boldsymbol{v}) - \frac{1}{2}\boldsymbol{h}\boldsymbol{v}D_y \boldsymbol{v}                                                                                                        \\
     & \quad + \frac{1}{2}D_x(\boldsymbol{h}\boldsymbol{u}\boldsymbol{v}) - \frac{1}{2} \boldsymbol{u}\boldsymbol{v}D_x \boldsymbol{h} + \frac{1}{2} \boldsymbol{h}\boldsymbol{u}D_x \boldsymbol{v} - \frac{1}{2} \boldsymbol{h}\boldsymbol{v}D_x \boldsymbol{u}                                                                                                                                                       \\
     & \quad + \frac{\lambda}{6}\frac{\boldsymbol{\eta}^{2}}{\boldsymbol{h}^{2}}D_y \boldsymbol{h} + \frac{\lambda}{3}D_y \boldsymbol{\eta} - \frac{\lambda}{3}\frac{\boldsymbol{\eta}}{\boldsymbol{h}}D_y \boldsymbol{\eta} - \frac{\lambda}{6}D_y\left(\frac{\boldsymbol{\eta}^{2}}{\boldsymbol{h}}\right) + \frac{\lambda}{2}\left(1-\frac{\boldsymbol{\eta}}{\boldsymbol{h}}\right)D_y \boldsymbol{b} = \boldsymbol{0}, \\[0.5em]
     & \boldsymbol{h}\boldsymbol{w}_t + \frac{1}{2}D_x(\boldsymbol{h}\boldsymbol{u}\boldsymbol{w}) + \frac{1}{2}\boldsymbol{h}\boldsymbol{u}D_x \boldsymbol{w} - \frac{1}{2}\boldsymbol{u}\boldsymbol{w}D_x \boldsymbol{h} - \frac{1}{2}\boldsymbol{h}\boldsymbol{w}D_x \boldsymbol{u}                                                                                                                                                   \\
     & \quad + \frac{1}{2}D_y(\boldsymbol{h}\boldsymbol{v}\boldsymbol{w}) + \frac{1}{2}\boldsymbol{h}\boldsymbol{v}D_y \boldsymbol{w} - \frac{1}{2}\boldsymbol{v}\boldsymbol{w}D_y \boldsymbol{h} - \frac{1}{2}\boldsymbol{h}\boldsymbol{w}D_y \boldsymbol{v} = \lambda\left(1-\frac{\boldsymbol{\eta}}{\boldsymbol{h}}\right),                                                                                                          \\[0.5em]
     & \boldsymbol{\eta}_t + \boldsymbol{u}D_x \boldsymbol{\eta} + \boldsymbol{v}D_y \boldsymbol{\eta} + \frac{3}{2}\boldsymbol{u}D_x \boldsymbol{b} + \frac{3}{2}\boldsymbol{v}D_y \boldsymbol{b} = \boldsymbol{w}.
  \end{aligned}
\end{equation}

\begin{theorem} \label{theorem:splitform_2D_SGN_periodic}
  Consider the semi-discretization \eqref{eq:sd_periodic_2D_SGN} of the two-dimensional hyperbolic approximation of the
  SGN equations \eqref{eq:2D_hyperbolic_SGN} with periodic boundary conditions.
  If $D_x,D_y$ are periodic first-derivative SBP operators
  with diagonal mass/norm matrix $M$,
  \begin{enumerate}
    \item the total water mass $\boldsymbol{1}^T M \boldsymbol{h}$ is conserved.
    \item the total energy $\boldsymbol{1}^T M \boldsymbol{E}$ is conserved.
  \end{enumerate}
\end{theorem}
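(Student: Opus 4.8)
The plan is to transfer the continuous energy argument — the analogue of \eqref{eq:energy_conservation_shallow_water} for the system \eqref{eq:2D_hyperbolic_SGN} — to the semidiscrete level, with the periodic SBP identity $MD_x+D_x^TM=0$ (and likewise for $D_y$) playing the role of integration by parts and the split terms in \eqref{eq:sd_periodic_2D_SGN} compensating for the failure of the discrete product and chain rules in the spatial variables; in the time variable the ordinary product and chain rules apply. Two consequences of the hypotheses are used throughout: since $M$ is diagonal, $\mathbf{1}^TM(\boldsymbol a\boldsymbol b)=\boldsymbol a^TM\boldsymbol b=\boldsymbol b^TM\boldsymbol a$ for grid functions $\boldsymbol a,\boldsymbol b$ (componentwise product), and more generally $\mathbf{1}^TM(\boldsymbol a\boldsymbol b\boldsymbol c)=\boldsymbol a^TM(\boldsymbol b\boldsymbol c)$; and from periodicity, (i) $\boldsymbol a^TMD\boldsymbol b+\boldsymbol b^TMD\boldsymbol a=0$ for all $\boldsymbol a,\boldsymbol b$ and $D\in\{D_x,D_y\}$, and (ii) $\mathbf{1}^TMD\boldsymbol c=-\boldsymbol c^TMD\mathbf{1}=0$ since $D\mathbf{1}=\boldsymbol 0$ by consistency. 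We assume $\boldsymbol h>\boldsymbol 0$ componentwise so that $\boldsymbol E$ is well defined. Mass conservation is then immediate: by the first line of \eqref{eq:sd_periodic_2D_SGN}, $\tfrac{\dif}{\dif t}\mathbf{1}^TM\boldsymbol h=\mathbf{1}^TM\boldsymbol h_t=-\bigl(\boldsymbol u^TMD_x\boldsymbol h+\boldsymbol h^TMD_x\boldsymbol u+\boldsymbol v^TMD_y\boldsymbol h+\boldsymbol h^TMD_y\boldsymbol v\bigr)$, and the $x$- and $y$-pairs each vanish by (i).

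For the energy, rewrite the discrete energy density as $\boldsymbol E=\tfrac12\boldsymbol h(\boldsymbol u^2+\boldsymbol v^2)+\tfrac16\boldsymbol h\boldsymbol w^2+\tfrac{g}{2}\boldsymbol h^2+g\boldsymbol h\boldsymbol b+\tfrac{\lambda}{6}\boldsymbol h\bigl(\tfrac{\boldsymbol\eta}{\boldsymbol h}-1\bigr)^2$, differentiate in time, and collect the coefficients of $\boldsymbol h_t,\boldsymbol u_t,\boldsymbol v_t,\boldsymbol w_t,\boldsymbol\eta_t$. One finds that the coefficient of $\boldsymbol u_t$ is $\boldsymbol h\boldsymbol u$, that of $\boldsymbol v_t$ is $\boldsymbol h\boldsymbol v$, that of $\boldsymbol w_t$ is $\tfrac13\boldsymbol h\boldsymbol w$, that of $\boldsymbol\eta_t$ is $-\tfrac{\lambda}{3}\bigl(1-\tfrac{\boldsymbol\eta}{\boldsymbol h}\bigr)$, and that of $\boldsymbol h_t$ is the discrete ``entropy variable'' $\boldsymbol\psi:=\tfrac12(\boldsymbol u^2+\boldsymbol v^2)+\tfrac16\boldsymbol w^2+g(\boldsymbol h+\boldsymbol b)-\tfrac{\lambda}{6}\tfrac{\boldsymbol\eta^2}{\boldsymbol h^2}+\tfrac{\lambda}{6}$. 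The point is that $\boldsymbol h_t$, $\boldsymbol h\boldsymbol u_t$, $\boldsymbol h\boldsymbol v_t$, $\boldsymbol h\boldsymbol w_t$, $\boldsymbol\eta_t$ are exactly the combinations isolated on the left of the five lines of \eqref{eq:sd_periodic_2D_SGN}; substituting the corresponding right-hand sides and using the diagonality of $M$ rewrites $\tfrac{\dif}{\dif t}\mathbf{1}^TM\boldsymbol E$ as a finite sum of terms of the form $\boldsymbol a^TMD_x\boldsymbol b$, $\boldsymbol a^TMD_y\boldsymbol b$, and $\mathbf{1}^TMD_{x/y}(\boldsymbol c)$.

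It remains to show this sum vanishes, which we organize into three blocks. (a) \emph{Relaxation sources.} The coefficient $-\tfrac{\lambda}{3}(1-\tfrac{\boldsymbol\eta}{\boldsymbol h})$ of $\boldsymbol\eta_t$ acting on the source $\boldsymbol w$ of the last line gives $-\tfrac{\lambda}{3}\mathbf{1}^TM[(1-\tfrac{\boldsymbol\eta}{\boldsymbol h})\boldsymbol w]$, while the coefficient $\tfrac13\boldsymbol w$ of $\boldsymbol w_t$ acting on the source $\lambda(1-\tfrac{\boldsymbol\eta}{\boldsymbol h})$ of the fourth line gives $+\tfrac{\lambda}{3}\mathbf{1}^TM[\boldsymbol w(1-\tfrac{\boldsymbol\eta}{\boldsymbol h})]$; they cancel. (b) \emph{Shallow-water and bathymetry block.} The non-$\lambda$ part of $\boldsymbol\psi\boldsymbol h_t$, together with the non-$\lambda$ split terms of the $\boldsymbol u$-, $\boldsymbol v$-, and $\boldsymbol w$-equations, is treated exactly as the two-dimensional analogue of \eqref{eq:energy_conservation_shallow_water}: each split term in those equations, multiplied by $\boldsymbol u$, $\boldsymbol v$, or $\tfrac13\boldsymbol w$ respectively, pairs via (i) with a term from $\bigl(\tfrac12(\boldsymbol u^2+\boldsymbol v^2)+\tfrac16\boldsymbol w^2+g(\boldsymbol h+\boldsymbol b)\bigr)\boldsymbol h_t$ or collapses through (ii), and the cross terms $\tfrac12D_y(\boldsymbol h\boldsymbol u\boldsymbol v)$ and $\tfrac12D_x(\boldsymbol h\boldsymbol u\boldsymbol v)$ are matched against their mirror images in the two momentum equations. (c) \emph{Dispersive pressure block.} The remaining $\lambda$-terms — those of the $\boldsymbol u$-, $\boldsymbol v$-, and $\boldsymbol\eta$-equations (including the $\tfrac32\boldsymbol uD_x\boldsymbol b$, $\tfrac32\boldsymbol vD_y\boldsymbol b$ contributions of the last line weighted by $-\tfrac{\lambda}{3}(1-\tfrac{\boldsymbol\eta}{\boldsymbol h})$), together with the $\bigl(-\tfrac{\lambda}{6}\tfrac{\boldsymbol\eta^2}{\boldsymbol h^2}+\tfrac{\lambda}{6}\bigr)$ part of $\boldsymbol\psi$ times the advective part of $\boldsymbol h_t$ — must cancel; here one uses that the chosen split, e.g.\ $\tfrac{\lambda}{6}\tfrac{\boldsymbol\eta^2}{\boldsymbol h^2}D_x\boldsymbol h+\tfrac{\lambda}{3}D_x\boldsymbol\eta-\tfrac{\lambda}{3}\tfrac{\boldsymbol\eta}{\boldsymbol h}D_x\boldsymbol\eta-\tfrac{\lambda}{6}D_x\bigl(\tfrac{\boldsymbol\eta^2}{\boldsymbol h}\bigr)+\tfrac{\lambda}{2}(1-\tfrac{\boldsymbol\eta}{\boldsymbol h})D_x\boldsymbol b$ in the second line, is built so that after multiplication by $\boldsymbol u$ everything telescopes through (i) and (ii); in particular $\tfrac{\lambda}{2}(1-\tfrac{\boldsymbol\eta}{\boldsymbol h})D_x\boldsymbol b$ cancels $-\tfrac{\lambda}{3}(1-\tfrac{\boldsymbol\eta}{\boldsymbol h})\cdot\tfrac32\boldsymbol uD_x\boldsymbol b$.

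I expect block (c) to be the main obstacle: it is the only place where the hyperbolic pressure and the relaxation variable $\boldsymbol\eta$ enter, it couples four of the five equations, and it has no counterpart in the shallow-water computation \eqref{eq:energy_conservation_shallow_water}; verifying that the particular split telescopes requires carefully tracking the $\tfrac{\boldsymbol\eta^2}{\boldsymbol h}$-, $\tfrac{\boldsymbol\eta}{\boldsymbol h}$-, and $\boldsymbol\eta$-weighted contributions and repeatedly invoking $\mathbf{1}^TM(\boldsymbol a\boldsymbol b\boldsymbol c)=\boldsymbol a^TM(\boldsymbol b\boldsymbol c)$ for the diagonal $M$. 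The two-dimensional cross-advection bookkeeping of block (b), though lengthy, is routine by comparison, and block (a) is immediate.
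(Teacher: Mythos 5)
Your proposal is correct and follows essentially the same route as the paper: mass conservation via the periodic SBP identity applied to the first equation, and energy conservation via the time chain rule with exactly the energy gradients $\partial_{\boldsymbol{u}}\boldsymbol{E}=\boldsymbol{h}\boldsymbol{u}$, $\partial_{\boldsymbol{w}}\boldsymbol{E}=\tfrac13\boldsymbol{h}\boldsymbol{w}$, $\partial_{\boldsymbol{\eta}}\boldsymbol{E}=-\tfrac{\lambda}{3}(1-\boldsymbol{\eta}/\boldsymbol{h})$, $\partial_{\boldsymbol{h}}\boldsymbol{E}=\tfrac12(\boldsymbol{u}^2+\boldsymbol{v}^2)+\tfrac16\boldsymbol{w}^2+g(\boldsymbol{h}+\boldsymbol{b})+\tfrac{\lambda}{6}(1-\boldsymbol{\eta}^2/\boldsymbol{h}^2)$ used in the paper, followed by the same decomposition into a shallow-water block and a dispersive/relaxation block whose terms cancel pairwise by the periodic SBP property (your blocks (a) and (c) together form the paper's non-hydrostatic part, and the specific cancellations you single out, such as the source terms and the $\tfrac{\lambda}{2}(1-\boldsymbol{\eta}/\boldsymbol{h})D_x\boldsymbol{b}$ against $-\tfrac{\lambda}{3}(1-\boldsymbol{\eta}/\boldsymbol{h})\cdot\tfrac32\boldsymbol{u}D_x\boldsymbol{b}$, are exactly those appearing there). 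The only difference is that the paper carries out the term-by-term bookkeeping explicitly, which confirms that the telescoping you anticipate in block (c) does go through.
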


\begin{proof}
Part 1:
We get
  \begin{equation*}
    \begin{aligned}
      \partial_t(\boldsymbol{1}^TM\boldsymbol{h}) & =\boldsymbol{1}^TM\partial_t\boldsymbol{h}                                                                                  \\
                        & = - \boldsymbol{1}^TM \boldsymbol{u}D_x \boldsymbol{h}
                            - \boldsymbol{1}^T M\boldsymbol{h} D_x \boldsymbol{u}
                            - \boldsymbol{1}^T M\boldsymbol{v}D_y \boldsymbol{h}
                            - \boldsymbol{1}^T M \boldsymbol{h} D_y \boldsymbol{v}                                        \\
                        & =-(\boldsymbol{u}^T MD_x \boldsymbol{h}  + \boldsymbol{h}^T MD_x\boldsymbol{u} + \boldsymbol{v}^TMD_y\boldsymbol{h} + \boldsymbol{h}^T MD_y \boldsymbol{v})
                        = 0
    \end{aligned}
  \end{equation*}
  using the periodic SBP property $M D_d + D_d^T M = 0$ for $d \in \{x,y\}$.

  Part 2:
To show energy conservation, we have to show that
  \begin{equation} \label{eq:splitcondition}
    \partial_t \boldsymbol{E} = \partial_{\boldsymbol{q}}\boldsymbol{E} \cdot \partial_t \boldsymbol{q} = \partial_{\boldsymbol{h}} \boldsymbol{E} \cdot \partial_t \boldsymbol{h} + \partial_{\boldsymbol{u}} \boldsymbol{E} \cdot \partial_t \boldsymbol{u} + \partial_{\boldsymbol{v}} \boldsymbol{E} \cdot \partial_t \boldsymbol{v} + \partial_{\boldsymbol{w}} \boldsymbol{E} \cdot \partial_t \boldsymbol{w} + \partial_{\boldsymbol{\eta}} \boldsymbol{E} \cdot \partial_t \boldsymbol{\eta}    \\
  \end{equation}
  satisfies $\boldsymbol{1}^T M \partial_t \boldsymbol{E} = 0$, with the energy
  \begin{equation*}
    \boldsymbol{E} = \boldsymbol{h}\left(\frac{\boldsymbol{u}^2+\boldsymbol{v}^2}{2} + \frac{1}{6}\boldsymbol{w}^2 + \frac{g}{2}(\boldsymbol{h}+2\boldsymbol{b}) + \frac{\lambda}{6}\left(\frac{\boldsymbol{\eta}}{\boldsymbol{h}}-1\right)^2\right)
  \end{equation*}
  and its derivatives
  \begin{equation}
    \begin{aligned}
      \partial_{\boldsymbol{u}} \boldsymbol{E}    & = \boldsymbol{h} \boldsymbol{u},                                                           \\
      \partial_{\boldsymbol{v}} \boldsymbol{E}    & = \boldsymbol{h} \boldsymbol{v},                                                           \\
      \partial_{\boldsymbol{w}} \boldsymbol{E}    & = \tfrac{1}{3} \boldsymbol{h} \boldsymbol{w},                                              \\
      \partial_{\boldsymbol{\eta}} \boldsymbol{E} & = -\tfrac{\lambda}{3}\!\left(1-\tfrac{\boldsymbol{\eta}}{\boldsymbol{h}}\right),           \\
      \partial_{\boldsymbol{h}} \boldsymbol{E}    & = \tfrac{1}{2}\boldsymbol{u}^2 + \tfrac{1}{2}\boldsymbol{v}^2 + \tfrac{1}{6} \boldsymbol{w}^2 + g\boldsymbol{h} + g\boldsymbol{b}
      + \tfrac{\lambda}{6}\!\left(1-\tfrac{\boldsymbol{\eta}^2}{\boldsymbol{h}^2}\right).
    \end{aligned}
  \end{equation}
  We compute
  \begin{equation*}
    \begin{aligned}
       & -\partial_t(\boldsymbol{1}^TM\boldsymbol{E})                                                                                                                                                                                                                                                                           \\
       & = -\left(\boldsymbol{1}^{T} M \partial_{\boldsymbol{h}} \boldsymbol{E} \partial_t \boldsymbol{h}+\boldsymbol{1}^T M \partial_{\boldsymbol{u}} \boldsymbol{E} \partial_t \boldsymbol{u}+\boldsymbol{1}^T M \partial_{\boldsymbol{v}} \boldsymbol{E} \partial_t \boldsymbol{v}+\boldsymbol{1}^T M \partial_{\boldsymbol{w}} \boldsymbol{E} \partial_t \boldsymbol{w}+\boldsymbol{1}^T M \partial_{\boldsymbol{\eta}} \boldsymbol{E} \partial_t \boldsymbol{\eta}\right)                                                                                                      \\
       & =\boldsymbol{1}^{T} M\left(g \boldsymbol{h}+g\boldsymbol{b}+\frac{1}{2} \boldsymbol{u}^2+\frac{1}{2} \boldsymbol{v}^2+\frac{1}{6} \boldsymbol{w}^2+\frac{\lambda}{6}\left(1-\frac{\boldsymbol{\eta}^2}{\boldsymbol{h}^2}\right)\right) \left(\boldsymbol{u} D_x \boldsymbol{h}+\boldsymbol{h} D_x \boldsymbol{u}+\boldsymbol{v} D_y \boldsymbol{h}+\boldsymbol{h} D_y \boldsymbol{v} \right)                                                                                                                                                                                      \\
       & +\boldsymbol{1}^{T} M\boldsymbol{u}\left( g D_x(\boldsymbol{h}(\boldsymbol{h}+\boldsymbol{b}))-g(\boldsymbol{h}+\boldsymbol{b}) D_x \boldsymbol{h}+\frac{1}{2} \boldsymbol{h} D_x \boldsymbol{u}^2 - \frac{1}{2} \boldsymbol{u}^2 D_x \boldsymbol{h}+\frac{1}{2} \boldsymbol{u} D_x(\boldsymbol{h} \boldsymbol{u})  \right.                                                                                                                                                                                    \\
       & -\frac{1}{2} \boldsymbol{h} \boldsymbol{u} D_x \boldsymbol{u} + \frac{1}{2} D_y(\boldsymbol{h} \boldsymbol{u} \boldsymbol{v})+\frac{1}{2}\left(-\boldsymbol{u} \boldsymbol{v} D_y \boldsymbol{h}+\boldsymbol{h} \boldsymbol{v} D_y \boldsymbol{u}-\boldsymbol{h} \boldsymbol{u} D_y \boldsymbol{v}\right) \\
       & \left. +\frac{\lambda}{6} \frac{\boldsymbol{\eta}^2}{\boldsymbol{h}^2} D_x \boldsymbol{h}+\frac{\lambda}{3} D_x \boldsymbol{\eta}-\frac{\lambda}{3} \frac{\boldsymbol{\eta}}{\boldsymbol{h}} D_x \boldsymbol{\eta} -\frac{\lambda}{6} D_x\left(\frac{\boldsymbol{\eta}^2}{\boldsymbol{h}}\right)+\frac{\lambda}{2}\left(1-\frac{\boldsymbol{\eta}}{\boldsymbol{h}}\right) D_x \boldsymbol{b} \right) \\
       & +\boldsymbol{1}^TM  \boldsymbol{v}\Bigl(g D_y(\boldsymbol{h}(\boldsymbol{h}+\boldsymbol{b}))-g(\boldsymbol{h}+\boldsymbol{b}) D_y \boldsymbol{h}+\frac{1}{2} \boldsymbol{h} D_y\left(\boldsymbol{v}^2\right)-\frac{1}{2} \boldsymbol{v}^2 D_y \boldsymbol{h}+\frac{1}{2} \boldsymbol{v} D_y(\boldsymbol{h} \boldsymbol{v})\\
       & -\frac{1}{2} \boldsymbol{h} \boldsymbol{v} D_y \boldsymbol{v}+\frac{1}{2} D_x(\boldsymbol{h} \boldsymbol{u} \boldsymbol{v}) +\frac{1}{2}\left(-\boldsymbol{u} \boldsymbol{v} D_x \boldsymbol{h}+\boldsymbol{h} \boldsymbol{u} D_x \boldsymbol{v}-\boldsymbol{h} \boldsymbol{v} D_x \boldsymbol{u}\right)+\frac{\lambda}{6} \frac{\boldsymbol{\eta}^2}{\boldsymbol{h}^2} D_y \boldsymbol{h}+\frac{\lambda}{3} D_y \boldsymbol{\eta} \\
       & -\frac{\lambda}{3} \frac{\boldsymbol{\eta}}{\boldsymbol{h}} D_y \boldsymbol{\eta}-\frac{\lambda}{6} D_y\left(\frac{\boldsymbol{\eta}^2}{\boldsymbol{h}}\right)+\frac{\lambda}{2}\left(1-\frac{\boldsymbol{\eta}}{\boldsymbol{h}}\right) D_y \boldsymbol{b}\Bigr) \\
       & +\boldsymbol{1}^T M\left(\frac{1}{3}  \boldsymbol{w}\right)\left(\frac{1}{2} D_x(\boldsymbol{h} \boldsymbol{u} \boldsymbol{w})+\frac{1}{2} \boldsymbol{h} \boldsymbol{u} D_x \boldsymbol{w}-\frac{1}{2} \boldsymbol{u} \boldsymbol{w} D_x \boldsymbol{h}-\frac{1}{2} \boldsymbol{h} \boldsymbol{w} D_x \boldsymbol{u}\right.                                                                                                          \\
       & \left. +\frac{1}{2} D_y(\boldsymbol{h} \boldsymbol{v} \boldsymbol{w})+\frac{1}{2} \boldsymbol{h} \boldsymbol{v} D_y \boldsymbol{w} -\frac{1}{2} \boldsymbol{v} \boldsymbol{w} D_y \boldsymbol{h}-\frac{1}{2} \boldsymbol{h} \boldsymbol{w} D_y \boldsymbol{v}-\lambda\left(1-\frac{\boldsymbol{\eta}}{\boldsymbol{h}}\right)\right)                                                                                                                                                                                               \\
       & +\boldsymbol{1}^{T} M\left(-\frac{\lambda}{3}\left(1-\frac{\boldsymbol{\eta}}{\boldsymbol{h}}\right)\right)\left(\boldsymbol{u} D_x \boldsymbol{\eta}+\boldsymbol{v} D_y \boldsymbol{\eta}+\frac{3}{2} \boldsymbol{u} D_x \boldsymbol{b}+\frac{3}{2} \boldsymbol{v} D_y \boldsymbol{b}-\boldsymbol{w}\right).                                                                                                                                                          \\
    \end{aligned}
  \end{equation*}
    To simplify the equation above, we decompose the expression for
  $\partial_t(\boldsymbol{1}^TM\boldsymbol{E})$ into a shallow water (SW) component and a nonhydrostatic (NHS)
  component. In the subsequent calculation, several terms cancel either due to
  appearing twice with opposite signs or because of the periodic SBP property.
  First, the SW part is
\begin{equation*}
    \begin{aligned}
       & g(\boldsymbol{h}\boldsymbol{u})^{T}MD_x \boldsymbol{h} + g\left(\boldsymbol{h}^2\right)^{T}MD_x \boldsymbol{u} + g(\boldsymbol{h}\boldsymbol{v})^{T}MD_y \boldsymbol{h} + g\left(\boldsymbol{h}^2\right)^{T}MD_y \boldsymbol{v}                                                                                            \\
       &  + \frac{1}{2}\left(\boldsymbol{u}^3\right)^{T}MD_x \boldsymbol{h} + \frac{1}{2}\left(\boldsymbol{u}^2 \boldsymbol{h}\right)^{T}MD_x \boldsymbol{u} + \frac{1}{2}\left(\boldsymbol{u}^2 \boldsymbol{v}\right)^{T}MD_y \boldsymbol{h} + \frac{1}{2}\left(\boldsymbol{u}^2 \boldsymbol{h}\right)^{T}MD_y \boldsymbol{v}                                  \\
       & + \frac{1}{2}\left(\boldsymbol{v}^2 \boldsymbol{u}\right)^{T}MD_x \boldsymbol{h} + \frac{1}{2}\left(\boldsymbol{v}^2 \boldsymbol{h}\right)^{T}MD_x \boldsymbol{u} + \frac{1}{2}\left(\boldsymbol{v}^3\right)^{T}MD_y \boldsymbol{h} \\
       &  + \frac{1}{2}\left(\boldsymbol{v}^2 \boldsymbol{h}\right)^{T}MD_y \boldsymbol{v}  + g(\boldsymbol{h}\boldsymbol{u})^{T}MD_x \boldsymbol{h}                                                         \\
       &  - g\left(\boldsymbol{u} (\boldsymbol{h}+\boldsymbol{b})\right)^{T}MD_x \boldsymbol{h} + \frac{1}{2}(\boldsymbol{h}\boldsymbol{u})^{T}MD_x\left(\boldsymbol{u}^2\right) - \frac{1}{2}\left(\boldsymbol{u}^3\right)^{T}MD_x \boldsymbol{h} + \frac{1}{2}\left(\boldsymbol{u}^2\right)^{T}MD_x(\boldsymbol{h}\boldsymbol{u})                                                                              \\
       &  - \frac{1}{2}\left(\boldsymbol{h}\boldsymbol{u}^2\right)^{T}MD_x \boldsymbol{u} + \frac{1}{2}(\boldsymbol{u})^{T}MD_y(\boldsymbol{h}\boldsymbol{u}\boldsymbol{v}) - \frac{1}{2}\left(\boldsymbol{u}^2 \boldsymbol{v}\right)^{T}MD_y \boldsymbol{h} + \frac{1}{2}(\boldsymbol{h}\boldsymbol{u}\boldsymbol{v})^{T}MD_y \boldsymbol{u}                                                     \\
       &  - \frac{1}{2}\left(\boldsymbol{h}\boldsymbol{u}^2\right)^{T}MD_y \boldsymbol{v} + g(\boldsymbol{v})^{T}MD_y\left(\boldsymbol{h}(\boldsymbol{h}+\boldsymbol{b})\right) - g\left(\boldsymbol{v} (\boldsymbol{h}+\boldsymbol{b})\right)^{T}MD_y \boldsymbol{h} + \frac{1}{2}(\boldsymbol{h}\boldsymbol{v})^{T}MD_y\left(\boldsymbol{v}^2\right)                                       \\
       &  - \frac{1}{2}\left(\boldsymbol{v}^3\right)^{T}MD_y \boldsymbol{h} + \frac{1}{2}\left(\boldsymbol{v}^2\right)^{T}MD_y(\boldsymbol{h}\boldsymbol{v}) - \frac{1}{2}\left(\boldsymbol{h}\boldsymbol{v}^2\right)^{T}MD_y \boldsymbol{v} + \frac{1}{2}(\boldsymbol{v})^{T}MD_x(\boldsymbol{h}\boldsymbol{u}\boldsymbol{v})                                           \\
       &  - \frac{1}{2}\left(\boldsymbol{v}^2 \boldsymbol{u}\right)^{T}MD_x \boldsymbol{h} + \frac{1}{2}\left(\boldsymbol{h}\boldsymbol{u}\boldsymbol{v}\right)^{T}MD_x \boldsymbol{v} - \frac{1}{2}\left(\boldsymbol{h}\boldsymbol{v}^2\right)^{T}MD_x \boldsymbol{u} + g(\boldsymbol{b}\boldsymbol{u})^TMD_x\boldsymbol{h}                                                                                                                                                                                               \\
       &  + g(\boldsymbol{h}\boldsymbol{b})^TMD_x\boldsymbol{u} + g(\boldsymbol{b}\boldsymbol{v})^TMD_y \boldsymbol{h} + g(\boldsymbol{h}\boldsymbol{b})^TMD_y\boldsymbol{v} \\
       & = g(\boldsymbol{h}\boldsymbol{u})^TMD_x\boldsymbol{h} + g(\boldsymbol{b}\boldsymbol{u})^TMD_x\boldsymbol{h} - g\left(\boldsymbol{u}(\boldsymbol{h}+\boldsymbol{b})\right)^TMD_x\boldsymbol{h}                                                                                                                                  \\
       &  + g\left(\boldsymbol{h}^2\right)^TMD_x\boldsymbol{u} + g(\boldsymbol{u})^TMD_x\boldsymbol{h}^2 + g\left(\boldsymbol{h}^2\right)^TMD_y\boldsymbol{v} + g(\boldsymbol{v})^TMD_y\boldsymbol{h}^2                                                                                                                                                                                                                \\
       &  + \frac{1}{2}\left(\boldsymbol{u}^2\right)^TMD_x(\boldsymbol{h}\boldsymbol{u}) + \frac{1}{2}(\boldsymbol{h}\boldsymbol{u})^TMD_x\boldsymbol{u}^2  + \frac{1}{2}(\boldsymbol{h}\boldsymbol{u}\boldsymbol{v})^TMD_x\boldsymbol{v} + \frac{1}{2}(\boldsymbol{v})^TMD_x(\boldsymbol{h}\boldsymbol{u}\boldsymbol{v})                                                                                                                                            \\
       &  + \frac{1}{2}(\boldsymbol{h}\boldsymbol{u}\boldsymbol{v})^TMD_y\boldsymbol{u} + \frac{1}{2}(\boldsymbol{u})^TMD_y\boldsymbol{h}\boldsymbol{u}\boldsymbol{v} + \frac{1}{2}\left(\boldsymbol{v}^2\right)^TMD_x\boldsymbol{v}\boldsymbol{h} + (\boldsymbol{v}\boldsymbol{h})^TMD_x\boldsymbol{v}^2                                                                                                                                                                                                     \\
       &  + g(\boldsymbol{h}\boldsymbol{b})^TMD_x\boldsymbol{u} + g(\boldsymbol{u})^TMD_x(\boldsymbol{h}\boldsymbol{b}) + g(\boldsymbol{h}\boldsymbol{b})^TMD_y\boldsymbol{v} + g(\boldsymbol{v})^TMD_y(\boldsymbol{h}\boldsymbol{b})                                                                                                                                                                                           \\
       & = 0.
    \end{aligned}
  \end{equation*}
  The NHS part is
  \begin{align*}
       & \frac{1}{6}\left(\boldsymbol{w}^2\boldsymbol{u}\right)^TMD_x \boldsymbol{h} + \frac{1}{6}\left(\boldsymbol{w}^2\boldsymbol{h}\right)^TMD_x\boldsymbol{u}+ \frac{1}{6}\left(\boldsymbol{w}^2\boldsymbol{v}\right)^TMD_y\boldsymbol{h}+ \frac{1}{6}\left(\boldsymbol{w}^2\boldsymbol{h}\right)^TMD_y\boldsymbol{v}                                                                                                                                   \\
       & +\frac{\lambda}{6}\left(\left(1-\frac{\boldsymbol{\eta}^2}{\boldsymbol{h}^2}\right)\boldsymbol{u}\right)^TMD_x \boldsymbol{h} + \frac{\lambda}{6}\left(\left(1-\frac{\boldsymbol{\eta}^2}{\boldsymbol{h}^2}\right)\boldsymbol{h}\right)^TMD_x\boldsymbol{u}+ \frac{\lambda}{6}\left(\left(1-\frac{\boldsymbol{\eta}^2}{\boldsymbol{h}^2}\right)\boldsymbol{v}\right)^TMD_y\boldsymbol{h}         \\
       & + \frac{\lambda}{6}\left(\left(1-\frac{\boldsymbol{\eta}^2}{\boldsymbol{h}^2}\right)\boldsymbol{h}\right)^TMD_y\boldsymbol{v} +\frac{1}{6}\left(\boldsymbol{w}\right)^T M D_x\left(\boldsymbol{h} \boldsymbol{u} \boldsymbol{w}\right)+\frac{1}{6}\left(\boldsymbol{w}\boldsymbol{h}\boldsymbol{u}\right)^T M D_x\left(\boldsymbol{w}\right)                                          \\
       & -\frac{1}{6}\left(\boldsymbol{w}^2\boldsymbol{u}\right)^T M D_x\left(\boldsymbol{h}\right) -\frac{1}{6}\left(\boldsymbol{w}^2\boldsymbol{h}\right)^T M D_x\left(\boldsymbol{u}\right) + \frac{1}{6}\left(\boldsymbol{w}\right)^T M D_y\left(\boldsymbol{h} \boldsymbol{v} \boldsymbol{w}\right) + \frac{1}{6}\left(\boldsymbol{w}\boldsymbol{h}\boldsymbol{v}\right)^T M D_y\left(\boldsymbol{w}\right)                                                                                    \\
       & -\frac{1}{6}\left(\boldsymbol{w}^2\boldsymbol{v}\right)^T M D_y\left(\boldsymbol{h}\right)-\frac{1}{6}\left(\boldsymbol{w}^2\boldsymbol{h}\right)^T M D_y\left(\boldsymbol{v}\right)-\frac{1}{3}\left(\boldsymbol{w}\lambda\left(1-\frac{\boldsymbol{\eta}}{\boldsymbol{h}}\right)\right)^TM                                                                                            \\
       & -\frac{\lambda}{3}\left(\left(1-\frac{\boldsymbol{\eta}}{\boldsymbol{h}}\right)\boldsymbol{u}\right)^TMD_x \boldsymbol{\eta} -\frac{\lambda}{3}\left(\left(1-\frac{\boldsymbol{\eta}}{\boldsymbol{h}}\right)\boldsymbol{v}\right)^TMD_y\boldsymbol{\eta}-\frac{\lambda}{2}\left(\left(1-\frac{\boldsymbol{\eta}}{\boldsymbol{h}}\right)\boldsymbol{u}\right)^TMD_x\boldsymbol{b}                                                                                                                                                                         \\
       & - \frac{\lambda}{2}\left(\left(1-\frac{\boldsymbol{\eta}}{\boldsymbol{h}}\right)\boldsymbol{v}\right)^TMD_y\boldsymbol{b} +\frac{\lambda}{3}\left(\left(1-\frac{\boldsymbol{\eta}}{\boldsymbol{h}}\right)\boldsymbol{w}\right)^T M  +\frac{\lambda}{6} \left(\boldsymbol{u}\frac{\boldsymbol{\eta}^2}{\boldsymbol{h}^2}\right)^TM D_x \boldsymbol{h}+\frac{\lambda}{3}\left(\boldsymbol{u}\right)^TM D_x \boldsymbol{\eta}\\
       & -\frac{\lambda}{3} \left(\boldsymbol{u} \frac{\boldsymbol{\eta}}{\boldsymbol{h}}\right)^TM D_x \boldsymbol{\eta} -\frac{\lambda}{6}\left(\boldsymbol{u}\right)^TM D_x\left(\frac{\boldsymbol{\eta}^2}{\boldsymbol{h}}\right)+\frac{\lambda}{2}\left(\boldsymbol{u}\left(1-\frac{\boldsymbol{\eta}}{\boldsymbol{h}}\right)\right)^TM D_x \boldsymbol{b} +\frac{\lambda}{6} \left(\boldsymbol{v}\frac{\boldsymbol{\eta}^2}{\boldsymbol{h}^2}\right)^TM D_y \boldsymbol{h} \\
       & +\frac{\lambda}{3}\left(\boldsymbol{v}\right)^T M D_y \boldsymbol{\eta} - \frac{\lambda}{3}\left(\boldsymbol{v} \frac{\boldsymbol{\eta}}{\boldsymbol{h}}\right)^TM D_y \boldsymbol{\eta}-\frac{\lambda}{6}\left(\boldsymbol{v}\right)^TM D_y\left(\frac{\boldsymbol{\eta}^2}{\boldsymbol{h}}\right)+\frac{\lambda}{2}\left(\boldsymbol{v}\left(1-\frac{\boldsymbol{\eta}}{\boldsymbol{h}}\right)\right)^TM D_y \boldsymbol{b} \\
       & = \frac{1}{6} \left(\boldsymbol{w}\right)^T MD_x\left(\boldsymbol{h}\boldsymbol{u}\boldsymbol{w}\right)+\frac{1}{6} \left(\boldsymbol{w}\boldsymbol{h}\boldsymbol{u}\right)^TMD_x\boldsymbol{w} +\frac{1}{6} \left(\boldsymbol{w}\right)^T MD_y\left(\boldsymbol{h}\boldsymbol{v}\boldsymbol{w}\right)+\frac{1}{6} \left(\boldsymbol{w}\boldsymbol{h}\boldsymbol{v}\right)^TMD_y\boldsymbol{w}                                                                                                                                 \\
       & +\frac{\lambda}{6} \left(\boldsymbol{h}\right)^TMD_x\boldsymbol{u} + \frac{\lambda}{6} \boldsymbol{u}^T MD_x \boldsymbol{h} -\frac{\lambda}{6} \left(\frac{\boldsymbol{\eta}^2}{\boldsymbol{h}} \right)^TMD_x\boldsymbol{u}-\frac{\lambda}{6}\boldsymbol{u}^TMD_x \frac{\boldsymbol{\eta}^2}{\boldsymbol{h}}                                                                                                                                     \\
       & +\frac{\lambda}{6} \left(\boldsymbol{h}\right)^TMD_y\boldsymbol{v} + \frac{\lambda}{6} \boldsymbol{v}^T MD_y \boldsymbol{h} -\frac{\lambda}{6} \left(\frac{\boldsymbol{\eta}^2}{\boldsymbol{h}} \right)^TMD_y\boldsymbol{v}-\frac{\lambda}{6}\boldsymbol{v}^TMD_y \frac{\boldsymbol{\eta}^2}{\boldsymbol{h}}                                                                                                                                     \\
       & =0.
  \end{align*}
  Thus, the semi-discretization conserves the energy.
\end{proof}

\begin{remark}
  For $\lambda = 0$, the hyperbolic system reduces to the shallow-water equations.
  Consequently, Theorem~\ref{theorem:splitform_2D_SGN_periodic} provides, as a special case,
  an energy-conserving semi-discretization of the two-dimensional shallow water equations with variable bathymetry,
  given by
\begin{equation} \label{eq:splitform_2D_shallow_water}
    \begin{aligned}
            & \boldsymbol{h}_t + \boldsymbol{u} D_x \boldsymbol{h} + \boldsymbol{h} D_x \boldsymbol{u} + \boldsymbol{v} D_y \boldsymbol{h} + \boldsymbol{h} D_y \boldsymbol{v} = 0, \\
     & \boldsymbol{h} \boldsymbol{u}_t + g D_x(\boldsymbol{h}(\boldsymbol{h}+\boldsymbol{b})) - g(\boldsymbol{h}+\boldsymbol{b}) D_x \boldsymbol{h} + \frac{1}{2} \boldsymbol{h} D_x\left(\boldsymbol{u}^2\right) - \frac{1}{2} \boldsymbol{u}^2 D_x \boldsymbol{h} + \frac{1}{2} \boldsymbol{u} D_x(\boldsymbol{h} \boldsymbol{u}) - \frac{1}{2} \boldsymbol{h} \boldsymbol{u} D_x \boldsymbol{u} \\
            & + \frac{1}{2} D_y(\boldsymbol{h} \boldsymbol{u} \boldsymbol{v}) + \frac{1}{2}\left(-\boldsymbol{u} \boldsymbol{v} D_y \boldsymbol{h} + \boldsymbol{h} \boldsymbol{v} D_y \boldsymbol{u} - \boldsymbol{h} \boldsymbol{u} D_y \boldsymbol{v}\right) = 0, \\
     & \boldsymbol{h} \boldsymbol{v}_t + g D_y(\boldsymbol{h}(\boldsymbol{h}+\boldsymbol{b})) - g(\boldsymbol{h}+\boldsymbol{b}) D_y \boldsymbol{h} + \frac{1}{2} \boldsymbol{h} D_y\left(\boldsymbol{v}^2\right) - \frac{1}{2} \boldsymbol{v}^2 D_y \boldsymbol{h} + \frac{1}{2} \boldsymbol{v} D_y(\boldsymbol{h} \boldsymbol{v}) - \frac{1}{2} \boldsymbol{h} \boldsymbol{v} D_y \boldsymbol{v} \\
            & + \frac{1}{2} D_x(\boldsymbol{h} \boldsymbol{u} \boldsymbol{v}) + \frac{1}{2}\left(-\boldsymbol{u} \boldsymbol{v} D_x \boldsymbol{h} + \boldsymbol{h} \boldsymbol{u} D_x \boldsymbol{v} - \boldsymbol{h} \boldsymbol{v} D_x \boldsymbol{u}\right) = 0.
    \end{aligned}
\end{equation}
See also~\cite{wintermeyer2017entropy} for an alternative split form
in conservative variables.
\end{remark}

\subsection{Weakly Enforced Reflecting Boundary Conditions}

For reflecting (solid wall) boundary conditions, the normal velocity at
the boundary is zero. We impose this condition weakly using
simultaneous approximation terms (SATs)~\cite{carpenter1994time}.
Thus, we propose the semi-discretization
\begin{equation} \label{eq:semi_2D}
  \begin{aligned}
     & \boldsymbol{h}_t + \boldsymbol{u}D_x \boldsymbol{h} + \boldsymbol{h}D_x\boldsymbol{u} + \boldsymbol{v}D_y \boldsymbol{h} + \boldsymbol{h}D_y\boldsymbol{v} - M^{-1}R^TBN_xR(\boldsymbol{h}\boldsymbol{u}) - M^{-1}R^TBN_yR(\boldsymbol{h}\boldsymbol{v}) = \boldsymbol{0},                                                                                                                                                        \\[0.5em]
     & \boldsymbol{h}\boldsymbol{u}_t + gD_x\left(\boldsymbol{h}(\boldsymbol{h}+\boldsymbol{b})\right) - g(\boldsymbol{h}+\boldsymbol{b})D_x \boldsymbol{h} + \frac{1}{2}\boldsymbol{h}D_x(\boldsymbol{u}^2) - \frac{1}{2}\boldsymbol{u}^2 D_x \boldsymbol{h} + \frac{1}{2}\boldsymbol{u}D_x(\boldsymbol{h}\boldsymbol{u}) - \frac{1}{2}\boldsymbol{h}\boldsymbol{u}D_x \boldsymbol{u}                                                                                                       \\
     & \quad + \frac{1}{2}D_y(\boldsymbol{h}\boldsymbol{u}\boldsymbol{v}) - \frac{1}{2} \boldsymbol{u}\boldsymbol{v}D_y\boldsymbol{h} + \frac{1}{2} \boldsymbol{h}\boldsymbol{v}D_y \boldsymbol{u} - \frac{1}{2} \boldsymbol{h}\boldsymbol{u}D_y \boldsymbol{v}                                                                                                                                                          \\
     & \quad + \frac{\lambda}{6}\frac{\boldsymbol{\eta}^{2}}{\boldsymbol{h}^{2}}D_x \boldsymbol{h} + \frac{\lambda}{3}D_x \boldsymbol{\eta} - \frac{\lambda}{3}\frac{\boldsymbol{\eta}}{\boldsymbol{h}}D_x \boldsymbol{\eta} - \frac{\lambda}{6}D_x\left(\frac{\boldsymbol{\eta}^{2}}{\boldsymbol{h}}\right) + \frac{\lambda}{2}\left(1-\frac{\boldsymbol{\eta}}{\boldsymbol{h}}\right)D_x \boldsymbol{b} = \boldsymbol{0}, \\[0.5em]
     & \boldsymbol{h}\boldsymbol{v}_t + gD_y\left(\boldsymbol{h}(\boldsymbol{h}+\boldsymbol{b})\right) - g(\boldsymbol{h}+\boldsymbol{b})D_y\boldsymbol{h} + \frac{1}{2}\boldsymbol{h}D_y(\boldsymbol{v}^2) - \frac{1}{2}\boldsymbol{v}^2 D_y \boldsymbol{h} + \frac{1}{2}\boldsymbol{v}D_y(\boldsymbol{h}\boldsymbol{v}) - \frac{1}{2}\boldsymbol{h}\boldsymbol{v}D_y \boldsymbol{v}                                                                                                        \\
     & \quad + \frac{1}{2}D_x(\boldsymbol{h}\boldsymbol{u}\boldsymbol{v}) - \frac{1}{2} \boldsymbol{u}\boldsymbol{v}D_x \boldsymbol{h} + \frac{1}{2} \boldsymbol{h}\boldsymbol{u}D_x \boldsymbol{v} - \frac{1}{2} \boldsymbol{h}\boldsymbol{v}D_x \boldsymbol{u}                                                                                                                                                          \\
     & \quad + \frac{\lambda}{6}\frac{\boldsymbol{\eta}^{2}}{\boldsymbol{h}^{2}}D_y \boldsymbol{h} + \frac{\lambda}{3}D_y \boldsymbol{\eta} - \frac{\lambda}{3}\frac{\boldsymbol{\eta}}{\boldsymbol{h}}D_y \boldsymbol{\eta} - \frac{\lambda}{6}D_y\left(\frac{\boldsymbol{\eta}^{2}}{\boldsymbol{h}}\right) + \frac{\lambda}{2}\left(1-\frac{\boldsymbol{\eta}}{\boldsymbol{h}}\right)D_y \boldsymbol{b} = \boldsymbol{0}, \\[0.5em]
     & \boldsymbol{h}\boldsymbol{w}_t + \frac{1}{2}D_x(\boldsymbol{h}\boldsymbol{u}\boldsymbol{w}) + \frac{1}{2}\boldsymbol{h}\boldsymbol{u}D_x \boldsymbol{w} - \frac{1}{2}\boldsymbol{u}\boldsymbol{w}D_x \boldsymbol{h} - \frac{1}{2}\boldsymbol{h}\boldsymbol{w}D_x \boldsymbol{u}                                                                                                                                                   \\
     & \quad + \frac{1}{2}D_y(\boldsymbol{h}\boldsymbol{v}\boldsymbol{w}) + \frac{1}{2}\boldsymbol{h}\boldsymbol{v}D_y \boldsymbol{w} - \frac{1}{2}\boldsymbol{v}\boldsymbol{w}D_y \boldsymbol{h} - \frac{1}{2}\boldsymbol{h}\boldsymbol{w}D_y \boldsymbol{v} = \lambda\left(1-\frac{\boldsymbol{\eta}}{\boldsymbol{h}}\right),                                                                                                          \\[0.5em]
     & \boldsymbol{\eta}_t + \boldsymbol{u}D_x \boldsymbol{\eta} + \boldsymbol{v}D_y \boldsymbol{\eta} + \frac{3}{2}\boldsymbol{u}D_x \boldsymbol{b} + \frac{3}{2}\boldsymbol{v}D_y \boldsymbol{b} = \boldsymbol{w}.
  \end{aligned}
\end{equation}
Since we use a non-conservative form of the equations for the velocities
and $\eta$, we do not need to include SATs there. Thus, the only equation
that is modified (in addition to choosing non-periodic SBP operators)
compared to the periodic case \eqref{eq:sd_periodic_2D_SGN} is the
first one.

\begin{theorem}\label{theorem:conservation_reflecting}
  Consider the semi-discretization \eqref{eq:semi_2D} of the two-dimensional hyperbolic approximation of the
  SGN equations \eqref{eq:2D_hyperbolic_SGN} with reflecting boundary conditions.
  If $D_x,D_y$ are non-periodic first-derivative SBP operators
  with diagonal mass/norm matrix $M$,
  \begin{enumerate}
    \item the total water mass $\boldsymbol{1}^T M \boldsymbol{h}$ is conserved.
    \item the total energy $\boldsymbol{1}^T M \boldsymbol{E}$ is conserved.
  \end{enumerate}
\end{theorem}

\begin{proof}
  Part 1:
We get
  \begin{equation*}
  \begin{aligned}
      \partial_t(\boldsymbol{1}^TM\boldsymbol{h}) & =\boldsymbol{1}^TM\partial_t\boldsymbol{h}                                                                                  \\
                        & = -\boldsymbol{1}^TM \boldsymbol{u}D_x \boldsymbol{h} -\boldsymbol{1}^T M\boldsymbol{v}D_y \boldsymbol{h} - \boldsymbol{1}^T M\boldsymbol{h} D_x \boldsymbol{u} - \boldsymbol{1}^T M \boldsymbol{h} D_y \boldsymbol{v}  \\
                        & \quad + \boldsymbol{1}^T R^T BN_xR(\boldsymbol{h}\boldsymbol{u}) + \boldsymbol{1}^T R^TBN_yR(\boldsymbol{h}\boldsymbol{v}) \\
                        & = -\boldsymbol{u}^T MD_x \boldsymbol{h}  - \boldsymbol{h}^T MD_x\boldsymbol{u}-\boldsymbol{v}^TMD_y\boldsymbol{h} - \boldsymbol{h}^T MD_y \boldsymbol{v}             \\
                        & \quad +\boldsymbol{1}^T R^TBN_xR(\boldsymbol{h}\boldsymbol{u}) + \boldsymbol{1}^T R^TBN_yR(\boldsymbol{h}\boldsymbol{v})  \\
                        & = - \boldsymbol{u}^T(R^TBN_xR - D_x^TM)\boldsymbol{h} - \boldsymbol{h}^T MD_x\boldsymbol{u}  +\boldsymbol{h}^T(R^TBN_xR)\boldsymbol{u}                                          \\
                        & \quad - \boldsymbol{v}^T(R^TBN_yR-D_y^TM)\boldsymbol{h} - \boldsymbol{h}^TMD_y\boldsymbol{v} + \boldsymbol{h}^T(R^TBN_yR)\boldsymbol{v}                                         \\
                        & = -\boldsymbol{u}^T(R^TBN_xR)\boldsymbol{h} + \boldsymbol{h}^T(R^TBN_xR)\boldsymbol{u} - \boldsymbol{v}^T(R^TBN_yR)\boldsymbol{h} + \boldsymbol{h}^T(R^TBN_yR)\boldsymbol{v}                              \\
                        & \quad +  \boldsymbol{u}^T D_x^T M \boldsymbol{h} - \boldsymbol{h}^T M D_x \boldsymbol{u} + \boldsymbol{v}^TD_y^T M\boldsymbol{h}-\boldsymbol{h}^TMD_y \boldsymbol{v}                                      \\
                        & = 0
    \end{aligned}
 \end{equation*}
using the SBP property.

 Part 2: We compute
  \begin{equation*}
    \begin{aligned}
       & -\partial_t(\boldsymbol{1}^TM\boldsymbol{E})                                                                                                                                                                                                                                                                           \\
       & = -\left(\boldsymbol{1}^{T} M \partial_{\boldsymbol{h}} \boldsymbol{E} \partial_t \boldsymbol{h}+\boldsymbol{1}^T M \partial_{\boldsymbol{u}} \boldsymbol{E} \partial_t \boldsymbol{u}+\boldsymbol{1}^T M \partial_{\boldsymbol{v}} \boldsymbol{E} \partial_t \boldsymbol{v}+\boldsymbol{1}^T M \partial_{\boldsymbol{w}} \boldsymbol{E} \partial_t \boldsymbol{w}+\boldsymbol{1}^T M \partial_{\boldsymbol{\eta}} \boldsymbol{E} \partial_t \boldsymbol{\eta}\right)                                                                                                      \\
       & =\boldsymbol{1}^{T} M\left(g \boldsymbol{h}+g\boldsymbol{b}+\frac{1}{2} \boldsymbol{u}^2+\frac{1}{2} \boldsymbol{v}^2+\frac{1}{6} \boldsymbol{w}^2+\frac{\lambda}{6}\left(1-\frac{\boldsymbol{\eta}^2}{\boldsymbol{h}^2}\right)\right)                                                                                                                                                                            \\
       & \cdot \left(\boldsymbol{u} D_x \boldsymbol{h}+\boldsymbol{h} D_x \boldsymbol{u}+\boldsymbol{v} D_y \boldsymbol{h}+\boldsymbol{h} D_y \boldsymbol{v}+ M^{-1} R^{T} B N_x R(\boldsymbol{h} \boldsymbol{u})+M^{-1} R^{T} B N_y R(\boldsymbol{h} \boldsymbol{v})\right)                                                                                                                                                                                      \\
       & +\boldsymbol{1}^{T} M\boldsymbol{u}\Bigl(g D_x(\boldsymbol{h}(\boldsymbol{h}+\boldsymbol{b}))-g(\boldsymbol{h}+\boldsymbol{b}) D_x \boldsymbol{h}+\frac{1}{2} \boldsymbol{h} D_x \boldsymbol{u}^2 - \frac{1}{2} \boldsymbol{u}^2 D_x \boldsymbol{h}+\frac{1}{2} \boldsymbol{u} D_x(\boldsymbol{h} \boldsymbol{u}) -\frac{1}{2} \boldsymbol{h} \boldsymbol{u} D_x \boldsymbol{u}                                                                                                                       \\
       & +\frac{1}{2} D_y(\boldsymbol{h} \boldsymbol{u} \boldsymbol{v}) +\frac{1}{2}\left(-\boldsymbol{u} \boldsymbol{v} D_y \boldsymbol{h}+\boldsymbol{h} \boldsymbol{v} D_y \boldsymbol{u}-\boldsymbol{h} \boldsymbol{u} D_y \boldsymbol{v}\right)+\frac{\lambda}{6} \frac{\boldsymbol{\eta}^2}{\boldsymbol{h}^2} D_x \boldsymbol{h}+\frac{\lambda}{3} D_x \boldsymbol{\eta}-\frac{\lambda}{3} \frac{\boldsymbol{\eta}}{\boldsymbol{h}} D_x \boldsymbol{\eta}\\
       & -\frac{\lambda}{6} D_x\left(\frac{\boldsymbol{\eta}^2}{\boldsymbol{h}}\right)+\frac{\lambda}{2}\left(1-\frac{\boldsymbol{\eta}}{\boldsymbol{h}}\right) D_x \boldsymbol{b}\Bigr) \\
       & +\boldsymbol{1}^TM  \boldsymbol{v}\Bigl(g D_y(\boldsymbol{h}(\boldsymbol{h}+\boldsymbol{b}))-g(\boldsymbol{h}+\boldsymbol{b}) D_y \boldsymbol{h}+\frac{1}{2} \boldsymbol{h} D_y\left(\boldsymbol{v}^2\right)-\frac{1}{2} \boldsymbol{v}^2 D_y \boldsymbol{h}+\frac{1}{2} \boldsymbol{v} D_y(\boldsymbol{h} \boldsymbol{v})                                                                                                               \\
       & -\frac{1}{2} \boldsymbol{h} \boldsymbol{v} D_y \boldsymbol{v}+\frac{1}{2} D_x(\boldsymbol{h} \boldsymbol{u} \boldsymbol{v}) +\frac{1}{2}\left(-\boldsymbol{u} \boldsymbol{v} D_x \boldsymbol{h}+\boldsymbol{h} \boldsymbol{u} D_x \boldsymbol{v}-\boldsymbol{h} \boldsymbol{v} D_x \boldsymbol{u}\right)+\frac{\lambda}{6} \frac{\boldsymbol{\eta}^2}{\boldsymbol{h}^2} D_y \boldsymbol{h}+\frac{\lambda}{3} D_y \boldsymbol{\eta}\\
       & -\frac{\lambda}{3} \frac{\boldsymbol{\eta}}{\boldsymbol{h}} D_y \boldsymbol{\eta}-\frac{\lambda}{6} D_y\left(\frac{\boldsymbol{\eta}^2}{\boldsymbol{h}}\right)+\frac{\lambda}{2}\left(1-\frac{\boldsymbol{\eta}}{\boldsymbol{h}}\right) D_y \boldsymbol{b}\Bigr) \\
       & +\boldsymbol{1}^T M\left(\frac{1}{3}  \boldsymbol{w}\right)\Bigl(\frac{1}{2} D_x(\boldsymbol{h} \boldsymbol{u} \boldsymbol{w})+\frac{1}{2} \boldsymbol{h} \boldsymbol{u} D_x \boldsymbol{w}-\frac{1}{2} \boldsymbol{u} \boldsymbol{w} D_x \boldsymbol{h}-\frac{1}{2} \boldsymbol{h} \boldsymbol{w} D_x \boldsymbol{u}+\frac{1}{2} D_y(\boldsymbol{h} \boldsymbol{v} \boldsymbol{w})                                                                                                       \\
       & +\frac{1}{2} \boldsymbol{h} \boldsymbol{v} D_y \boldsymbol{w}  -\frac{1}{2} \boldsymbol{v} \boldsymbol{w} D_y \boldsymbol{h}-\frac{1}{2} \boldsymbol{h} \boldsymbol{w} D_y \boldsymbol{v}-\lambda\left(1-\frac{\boldsymbol{\eta}}{\boldsymbol{h}}\right)\Bigr)                                                                                                                                                                                               \\
       & +\boldsymbol{1}^{T} M\left(-\frac{\lambda}{3}\left(1-\frac{\boldsymbol{\eta}}{\boldsymbol{h}}\right)\right)\left(\boldsymbol{u} D_x \boldsymbol{\eta}+\boldsymbol{v} D_y \boldsymbol{\eta}+\frac{3}{2} \boldsymbol{u} D_x \boldsymbol{b}+\frac{3}{2} \boldsymbol{v} D_y \boldsymbol{b}-\boldsymbol{w}\right)                                                                                                                                                          \\
    \end{aligned}
  \end{equation*}
As in Theorem~\ref{theorem:splitform_2D_SGN_periodic}, we split this expression into SW and NHS parts. In the subsequent calculation, several terms cancel either because they appear twice with opposite signs or because of the SBP property.
  The SW part is
  \begin{align*}
       & g(\boldsymbol{h}\boldsymbol{u})^{T}MD_x \boldsymbol{h} + g\left(\boldsymbol{h}^2\right)^{T}MD_x \boldsymbol{u} + g(\boldsymbol{h}\boldsymbol{v})^{T}MD_y \boldsymbol{h} + g\left(\boldsymbol{h}^2\right)^{T}MD_y \boldsymbol{v} - g\left(\boldsymbol{h}^2\right)^{T}\left(R^{T}BN_xR\right)\boldsymbol{u}                                                                                            \\
       &  - g\left(\boldsymbol{h}^2\right)^{T}\left(R^{T}BN_yR\right)\boldsymbol{v} + \frac{1}{2}\left(\boldsymbol{u}^3\right)^{T}MD_x \boldsymbol{h} + \frac{1}{2}\left(\boldsymbol{u}^2 \boldsymbol{h}\right)^{T}MD_x \boldsymbol{u} + \frac{1}{2}\left(\boldsymbol{u}^2 \boldsymbol{v}\right)^{T}MD_y \boldsymbol{h}                                  \\
       &  + \frac{1}{2}\left(\boldsymbol{u}^2 \boldsymbol{h}\right)^{T}MD_y \boldsymbol{v} - \frac{1}{2}\left(\boldsymbol{u}^2 \boldsymbol{h}\right)^{T}\left(R^{T}BN_xR\right)\boldsymbol{u} - \frac{1}{2}\left(\boldsymbol{u}^2 \boldsymbol{h}\right)^{T}\left(R^{T}BN_yR\right)\boldsymbol{v} + \frac{1}{2}\left(\boldsymbol{v}^2 \boldsymbol{u}\right)^{T}MD_x \boldsymbol{h} \\
       &  + \frac{1}{2}\left(\boldsymbol{v}^2 \boldsymbol{h}\right)^{T}MD_x \boldsymbol{u} + \frac{1}{2}\left(\boldsymbol{v}^3\right)^{T}MD_y \boldsymbol{h} + \frac{1}{2}\left(\boldsymbol{v}^2 \boldsymbol{h}\right)^{T}MD_y \boldsymbol{v} - \frac{1}{2}\left(\boldsymbol{v}^2\boldsymbol{h}\right)^{T}\left(R^{T}BN_xR\right)\boldsymbol{u}                                                         \\
       &  - \frac{1}{2}\left(\boldsymbol{v}^2 \boldsymbol{h}\right)^{T}\left(R^{T}BN_yR\right)\boldsymbol{v} + g(\boldsymbol{h}\boldsymbol{u})^{T}MD_x \boldsymbol{h} - g\left(\boldsymbol{u} (\boldsymbol{h}+\boldsymbol{b})\right)^{T}MD_x \boldsymbol{h}  \\
       &  + \frac{1}{2}(\boldsymbol{h}\boldsymbol{u})^{T}MD_x\left(\boldsymbol{u}^2\right) - \frac{1}{2}\left(\boldsymbol{u}^3\right)^{T}MD_x \boldsymbol{h} + \frac{1}{2}\left(\boldsymbol{u}^2\right)^{T}MD_x(\boldsymbol{h}\boldsymbol{u})                                                                              \\
       &  - \frac{1}{2}\left(\boldsymbol{h}\boldsymbol{u}^2\right)^{T}MD_x \boldsymbol{u} + \frac{1}{2}(\boldsymbol{u})^{T}MD_y(\boldsymbol{h}\boldsymbol{u}\boldsymbol{v}) - \frac{1}{2}\left(\boldsymbol{u}^2 \boldsymbol{v}\right)^{T}MD_y \boldsymbol{h} + \frac{1}{2}(\boldsymbol{h}\boldsymbol{u}\boldsymbol{v})^{T}MD_y \boldsymbol{u}                                                    \\
       &  - \frac{1}{2}\left(\boldsymbol{h}\boldsymbol{u}^2\right)^{T}MD_y \boldsymbol{v}  + g(\boldsymbol{v})^{T}MD_y\left(\boldsymbol{h}(\boldsymbol{h}+\boldsymbol{b})\right) - g\left(\boldsymbol{v} (\boldsymbol{h}+\boldsymbol{b})\right)^{T}MD_y \boldsymbol{h} + \frac{1}{2}(\boldsymbol{h}\boldsymbol{v})^{T}MD_y\left(\boldsymbol{v}^2\right) \\
       & - \frac{1}{2}\left(\boldsymbol{v}^3\right)^{T}MD_y \boldsymbol{h} + \frac{1}{2}\left(\boldsymbol{v}^2\right)^{T}MD_y(\boldsymbol{h}\boldsymbol{v})    - \frac{1}{2}\left(\boldsymbol{h}\boldsymbol{v}^2\right)^{T}MD_y \boldsymbol{v}                                   \\
       &   + \frac{1}{2}(\boldsymbol{v})^{T}MD_x(\boldsymbol{h}\boldsymbol{u}\boldsymbol{v}) - \frac{1}{2}\left(\boldsymbol{v}^2 \boldsymbol{u}\right)^{T}MD_x \boldsymbol{h} + \frac{1}{2}\left(\boldsymbol{h}\boldsymbol{u}\boldsymbol{v}\right)^{T}MD_x \boldsymbol{v} - \frac{1}{2}\left(\boldsymbol{h}\boldsymbol{v}^2\right)^{T}MD_x \boldsymbol{u}                                          \\
       &  + g(\boldsymbol{b}\boldsymbol{u})^TMD_x\boldsymbol{h} + g(\boldsymbol{h}\boldsymbol{b})^TMD_x\boldsymbol{u} + g(\boldsymbol{b}\boldsymbol{v})^TMD_y \boldsymbol{h} + g(\boldsymbol{h}\boldsymbol{b})^TMD_y\boldsymbol{v}                                                                                                                                                              \\
       &  - g(\boldsymbol{b}\boldsymbol{h})^T\left(R^TBN_xR\right)\boldsymbol{u} - g(\boldsymbol{b}\boldsymbol{h})^T\left(R^TBN_yR\right)\boldsymbol{v}                                                                                                                                                                                                                         \\
       & = g(\boldsymbol{h}\boldsymbol{u})^TMD_x\boldsymbol{h} + g(\boldsymbol{b}\boldsymbol{u})^TMD_x\boldsymbol{h} - g\left(\boldsymbol{u}(\boldsymbol{h}+\boldsymbol{b})\right)^TMD_x\boldsymbol{h}                                                                                                                                  \\
       &  + g\left(\boldsymbol{h}^2\right)^TMD_x\boldsymbol{u} + g(\boldsymbol{u})^TMD_x\boldsymbol{h}^2 - g(\boldsymbol{h})^T\left(R^TBN_xR\right)(\boldsymbol{h}\boldsymbol{u})                                                                                                                                                                              \\
       &  + g\left(\boldsymbol{h}^2\right)^TMD_y\boldsymbol{v} + g(\boldsymbol{v})^TMD_y\boldsymbol{h}^2 - g(\boldsymbol{h})^T\left(R^TBN_yR\right)(\boldsymbol{h}\boldsymbol{v})                                                                                                                                                                              \\
       &  + \frac{1}{2}\left(\boldsymbol{u}^2\right)^TMD_x(\boldsymbol{h}\boldsymbol{u}) + \frac{1}{2}(\boldsymbol{h}\boldsymbol{u})^TMD_x\boldsymbol{u}^2 - \frac{1}{2}\left(\boldsymbol{u}^2\right)^T\left(R^TBN_xR\right)(\boldsymbol{h}\boldsymbol{u}) + \frac{1}{2}(\boldsymbol{h}\boldsymbol{u}\boldsymbol{v})^TMD_x\boldsymbol{v}                                                                                 \\
       &  + \frac{1}{2}(\boldsymbol{v})^TMD_x(\boldsymbol{h}\boldsymbol{u}\boldsymbol{v})  - \frac{1}{2}\left(\boldsymbol{u}^2\right)^T\left(R^TBN_yR\right)(\boldsymbol{h}\boldsymbol{v}) + \frac{1}{2}(\boldsymbol{h}\boldsymbol{u}\boldsymbol{v})^TMD_y\boldsymbol{u} + \frac{1}{2}(\boldsymbol{u})^TMD_y\boldsymbol{h}\boldsymbol{u}\boldsymbol{v}                                                                                      \\
       &  - \frac{1}{2}\left(\boldsymbol{v}^2\right)^T\left(R^TBN_xR\right)(\boldsymbol{h}\boldsymbol{u}) + \frac{1}{2}\left(\boldsymbol{v}^2\right)^TMD_y\boldsymbol{v}\boldsymbol{h} +  \frac{1}{2} (\boldsymbol{v}\boldsymbol{h})^TMD_y\boldsymbol{v}^2                                                                                                                                             \\
       &  - \frac{1}{2}\left(\boldsymbol{v}^2\right)^T\left(R^TBN_yR\right)(\boldsymbol{v}\boldsymbol{h}) + g(\boldsymbol{h}\boldsymbol{b})^TMD_x\boldsymbol{u} + g(\boldsymbol{u})^TMD_x(\boldsymbol{h}\boldsymbol{b}) - g(\boldsymbol{b})^T\left(R^TBN_xR\right)(\boldsymbol{h}\boldsymbol{u})                                                                                                                                                        \\
       &  + g(\boldsymbol{h}\boldsymbol{b})^TMD_y\boldsymbol{v} + g(\boldsymbol{v})^TMD_y(\boldsymbol{h}\boldsymbol{b})  - g(\boldsymbol{b})^T\left(R^TBN_yR\right)(\boldsymbol{h}\boldsymbol{v})                                                                                                                                                                                                                         \\
       & = 0.
  \end{align*}
  The NHS part is
 \begin{align*}
       & \frac{1}{6}\left(\boldsymbol{w}^2\boldsymbol{u}\right)^TMD_x \boldsymbol{h} + \frac{1}{6}\left(\boldsymbol{w}^2\boldsymbol{h}\right)^TMD_x\boldsymbol{u}+ \frac{1}{6}\left(\boldsymbol{w}^2\boldsymbol{v}\right)^TMD_y\boldsymbol{h}+ \frac{1}{6}\left(\boldsymbol{w}^2\boldsymbol{h}\right)^TMD_y\boldsymbol{v}                                                                                                                                   \\
       & - \frac{1}{6}\left(\boldsymbol{w}^2\right)^T R^TBN_xR\left(\boldsymbol{h}\boldsymbol{u}\right) - \frac{1}{6}\left(\boldsymbol{w}^2\right)^TR^TBN_yR\left(\boldsymbol{h}\boldsymbol{v}\right)                                                                                                                                                                                                 \\
       & +\frac{\lambda}{6}\left(\left(1-\frac{\boldsymbol{\eta}^2}{\boldsymbol{h}^2}\right)\boldsymbol{u}\right)^TMD_x \boldsymbol{h} + \frac{\lambda}{6}\left(\left(1-\frac{\boldsymbol{\eta}^2}{\boldsymbol{h}^2}\right)\boldsymbol{h}\right)^TMD_x\boldsymbol{u}+ \frac{\lambda}{6}\left(\left(1-\frac{\boldsymbol{\eta}^2}{\boldsymbol{h}^2}\right)\boldsymbol{v}\right)^TMD_y\boldsymbol{h}        \\
       & + \frac{\lambda}{6}\left(\left(1-\frac{\boldsymbol{\eta}^2}{\boldsymbol{h}^2}\right)\boldsymbol{h}\right)^TMD_y\boldsymbol{v}  -\frac{\lambda}{6}\left(1-\frac{\boldsymbol{\eta}^2}{\boldsymbol{h}^2}\right)^T R^TBN_xR\left(\boldsymbol{h}\boldsymbol{u}\right) - \frac{\lambda}{6}\left(1-\frac{\boldsymbol{\eta}^2}{\boldsymbol{h}^2}\right)^TR^TBN_yR\left(\boldsymbol{h}\boldsymbol{v}\right)                                                                                                                                                    \\
       & +\frac{1}{6}\left(\boldsymbol{w}\right)^T M D_x\left(\boldsymbol{h} \boldsymbol{u} \boldsymbol{w}\right)+\frac{1}{6}\left(\boldsymbol{w}\boldsymbol{h}\boldsymbol{u}\right)^T M D_x\left(\boldsymbol{w}\right)-\frac{1}{6}\left(\boldsymbol{w}^2\boldsymbol{u}\right)^T M D_x\left(\boldsymbol{h}\right) -\frac{1}{6}\left(\boldsymbol{w}^2\boldsymbol{h}\right)^T M D_x\left(\boldsymbol{u}\right)                                             \\
       & + \frac{1}{6}\left(\boldsymbol{w}\right)^T M D_y\left(\boldsymbol{h} \boldsymbol{v} \boldsymbol{w}\right) + \frac{1}{6}\left(\boldsymbol{w}\boldsymbol{h}\boldsymbol{v}\right)^T M D_y\left(\boldsymbol{w}\right)-\frac{1}{6}\left(\boldsymbol{w}^2\boldsymbol{v}\right)^T M D_y\left(\boldsymbol{h}\right)-\frac{1}{6}\left(\boldsymbol{w}^2\boldsymbol{h}\right)^T M D_y\left(\boldsymbol{v}\right)                                                                                     \\
       & -\frac{1}{3}\left(\boldsymbol{w}\lambda\left(1-\frac{\boldsymbol{\eta}}{\boldsymbol{h}}\right)\right)^TM-\frac{\lambda}{3}\left(\left(1-\frac{\boldsymbol{\eta}}{\boldsymbol{h}}\right)\boldsymbol{u}\right)^TMD_x \boldsymbol{\eta} -\frac{\lambda}{3}\left(\left(1-\frac{\boldsymbol{\eta}}{\boldsymbol{h}}\right)\boldsymbol{v}\right)^TMD_y\boldsymbol{\eta}                                                                                          \\
       & -\frac{\lambda}{2}\left(\left(1-\frac{\boldsymbol{\eta}}{\boldsymbol{h}}\right)\boldsymbol{u}\right)^TMD_x\boldsymbol{b} - \frac{\lambda}{2}\left(\left(1-\frac{\boldsymbol{\eta}}{\boldsymbol{h}}\right)\boldsymbol{v}\right)^TMD_y\boldsymbol{b} +\frac{\lambda}{3}\left(\left(1-\frac{\boldsymbol{\eta}}{\boldsymbol{h}}\right)\boldsymbol{w}\right)^T M                                                                                                                                                                        \\
       & +\frac{\lambda}{6} \left(\boldsymbol{u}\frac{\boldsymbol{\eta}^2}{\boldsymbol{h}^2}\right)^TM D_x \boldsymbol{h}+\frac{\lambda}{3}\left(\boldsymbol{u}\right)^TM D_x \boldsymbol{\eta}-\frac{\lambda}{3} \left(\boldsymbol{u} \frac{\boldsymbol{\eta}}{\boldsymbol{h}}\right)^TM D_x \boldsymbol{\eta}-\frac{\lambda}{6}\left(\boldsymbol{u}\right)^TM D_x\left(\frac{\boldsymbol{\eta}^2}{\boldsymbol{h}}\right) \\
       & +\frac{\lambda}{2}\left(\boldsymbol{u}\left(1-\frac{\boldsymbol{\eta}}{\boldsymbol{h}}\right)\right)^TM D_x \boldsymbol{b} +\frac{\lambda}{6} \left(\boldsymbol{v}\frac{\boldsymbol{\eta}^2}{\boldsymbol{h}^2}\right)^TM D_y \boldsymbol{h}+\frac{\lambda}{3}\left(\boldsymbol{v}\right)^T M D_y \boldsymbol{\eta}-\frac{\lambda}{3}\left(\boldsymbol{v} \frac{\boldsymbol{\eta}}{\boldsymbol{h}}\right)^TM D_y \boldsymbol{\eta}\\
       & -\frac{\lambda}{6}\left(\boldsymbol{v}\right)^TM D_y\left(\frac{\boldsymbol{\eta}^2}{\boldsymbol{h}}\right)+\frac{\lambda}{2}\left(\boldsymbol{v}\left(1-\frac{\boldsymbol{\eta}}{\boldsymbol{h}}\right)\right)^TM D_y \boldsymbol{b} \\
       & = \frac{1}{6} \left(\boldsymbol{w}\right)^T MD_x\left(\boldsymbol{h}\boldsymbol{u}\boldsymbol{w}\right)+\frac{1}{6} \left(\boldsymbol{w}\boldsymbol{h}\boldsymbol{u}\right)^TMD_x\boldsymbol{w} -\frac{1}{6} \left(\boldsymbol{w}^2\right)^TR^TBN_xR\left(\boldsymbol{h}\boldsymbol{u}\right)                                                                                                                                                                 \\
       & +\frac{1}{6} \left(\boldsymbol{w}\right)^T MD_y\left(\boldsymbol{h}\boldsymbol{v}\boldsymbol{w}\right)+\frac{1}{6} \left(\boldsymbol{w}\boldsymbol{h}\boldsymbol{v}\right)^TMD_y\boldsymbol{w} -\frac{1}{6} \left(\boldsymbol{w}^2\right)^TR^TBN_yR\left(\boldsymbol{h}\boldsymbol{v}\right)                                                                                                                                                                  \\
       & +\frac{\lambda}{6} \left(\boldsymbol{h}\right)^TMD_x\boldsymbol{u} + \frac{\lambda}{6} \boldsymbol{u}^T MD_x \boldsymbol{h}- \frac{\lambda}{6} \left(\boldsymbol{1}\right)^T R^TBN_xR\left(\boldsymbol{h}\boldsymbol{u}\right)                                                                                                                                                                            \\
       & -\frac{\lambda}{6} \left(\frac{\boldsymbol{\eta}^2}{\boldsymbol{h}} \right)^TMD_x\boldsymbol{u}-\frac{\lambda}{6}\boldsymbol{u}^TMD_x \frac{\boldsymbol{\eta}^2}{\boldsymbol{h}}  +  \frac{\lambda}{6}\left(\frac{\boldsymbol{\eta}^2}{\boldsymbol{h}^2}\right)^TR^TBN_xR\left(\boldsymbol{h}\boldsymbol{u}\right)                                                                                                                               \\
       & +\frac{\lambda}{6} \left(\boldsymbol{h}\right)^TMD_y\boldsymbol{v} + \frac{\lambda}{6} \boldsymbol{v}^T MD_y \boldsymbol{h}- \frac{\lambda}{6} \left(\boldsymbol{1}\right)^T R^TBN_yR\left(\boldsymbol{h}\boldsymbol{v}\right)                                                                                                                                                                            \\
       & -\frac{\lambda}{6} \left(\frac{\boldsymbol{\eta}^2}{\boldsymbol{h}} \right)^TMD_y\boldsymbol{v}-\frac{\lambda}{6}\boldsymbol{v}^TMD_y \frac{\boldsymbol{\eta}^2}{\boldsymbol{h}}  +  \frac{\lambda}{6}\left(\frac{\boldsymbol{\eta}^2}{\boldsymbol{h}^2}\right)^TR^TBN_yR\left(\boldsymbol{h}\boldsymbol{v}\right)                                                                                                                               \\
       & =0.
  \end{align*}
  Thus, the energy is conserved.
\end{proof}

\begin{remark}[Concrete SBP operators and SATs]
  \label{rem:concrete_sbp}
  We provide explicit examples of one-dimensional second-order accurate SBP operators and their associated mass matrices for both periodic and reflecting boundary conditions. These operators are constructed on uniform grids with spacing $\Delta x$ and are employed in all numerical experiments in Section~\ref{sec:numerical_experiments}.

  For \textbf{periodic boundary conditions}, the derivative operator and mass matrix are given by~\cite{FORNBERG}
  \begin{align}
    D_x&=\frac{1}{2 \Delta x}\begin{pmatrix}
      0 & 1 & & & -1 \\
      -1 & 0 & 1 & & \\
      & \ddots & \ddots & \ddots & \\
      & & -1 & 0 & 1 \\
      1 & & & -1 & 0
    \end{pmatrix} \in \mathbb{R}^{\mathcal{N}_x \times \mathcal{N}_x},
    \quad
    M=\Delta x \operatorname{diag}(1, 1, \ldots, 1,  1).
  \end{align}
  One can verify that $M D_x + D_x^T M = 0$, as required for periodic SBP operators.

  For \textbf{reflecting boundary conditions}, the derivative operator employs the standard second-order accurate central finite difference stencil in the interior and first-order accurate one-sided finite differences at the boundaries~\cite{MATTSSON2004503}, yielding
  \begin{align}
    D_x&=\frac{1}{2\Delta x}\begin{pmatrix}
      -2 & 2 & & & \\
      -1 & 0 & 1 & & \\
      & \ddots & \ddots & \ddots & \\
      & & -1 & 0 & 1 \\
      & & & -2 & 2
    \end{pmatrix} \in \mathbb{R}^{\mathcal{N}_x \times \mathcal{N}_x},
    \quad
    M=\Delta x \operatorname{diag}(\tfrac{1}{2},1, \ldots, 1,\tfrac{1}{2}).
  \end{align}
  For these operators, the SBP property $M D_x + D_x^T M = \boldsymbol{e}_R \boldsymbol{e}_R^T - \boldsymbol{e}_L \boldsymbol{e}_L^T$ holds, where $\boldsymbol{e}_L = (1,0,\ldots,0)^T$ and $\boldsymbol{e}_R = (0,\ldots,0,1)^T$. Note that the reduced boundary weights of $1/2$ in the mass matrix arise from the trapezoidal quadrature rule.

  The \textbf{SATs} in the semi-discretization~\eqref{eq:semi_2D} act only at boundary nodes. The term $M^{-1}R^TBN_xR(\boldsymbol{h}\boldsymbol{u})$ evaluates to
  \begin{equation}
    \bigl(M^{-1}R^TBN_xR(\boldsymbol{h}\boldsymbol{u})\bigr)_{i,j} =
    \begin{cases}
      -\dfrac{2}{\Delta x}(\boldsymbol{h}\boldsymbol{u})_{1,j} & \text{if } i = 1, \\[0.8em]
      +\dfrac{2}{\Delta x}(\boldsymbol{h}\boldsymbol{u})_{\mathcal{N}_x,j} & \text{if } i = \mathcal{N}_x, \\[0.5em]
      0 & \text{otherwise},
    \end{cases}
  \end{equation}
  where the signs arise from the outer unit normal $-1$ at the left boundary and $+1$ at the right boundary, and the factor $2/\Delta x$ comes from the boundary weight of the inverted mass matrix. Analogously, the term $M^{-1}R^TBN_yR(\boldsymbol{h}\boldsymbol{v})$ evaluates to
  \begin{equation}
    \bigl(M^{-1}R^TBN_yR(\boldsymbol{h}\boldsymbol{v})\bigr)_{i,j} =
    \begin{cases}
      -\dfrac{2}{\Delta y}(\boldsymbol{h}\boldsymbol{v})_{i,1} & \text{if } j = 1, \\[0.8em]
      +\dfrac{2}{\Delta y}(\boldsymbol{h}\boldsymbol{v})_{i,\mathcal{N}_y} & \text{if } j = \mathcal{N}_y, \\[0.5em]
      0 & \text{otherwise}.
    \end{cases}
  \end{equation}
  At corner nodes, both contributions are added.
\end{remark}

\section{Computing Solitary Waves Numerically}
\label{sect:computing_solitary_waves}

The one-dimensional Serre--Green--Naghdi equations possess an exact solitary wave
solution~\cite{ranocha2025structure} of the form
\begin{equation} \label{eq:solitary_wave_analytical} \begin{split}
    h = h_{\infty} \left( 1 + \epsilon \operatorname{sech}^2\bigl( \kappa(x-Ct) \bigr) \right), \quad
    u = C \left( 1 - \dfrac{h_{\infty}}{h} \right),
  \end{split}\end{equation}
where $\epsilon = A/h_{\infty}$ with $A$ the soliton amplitude, and where
\begin{equation}
\label{eq:solitary_wave_parameters}
  \kappa^2 =\dfrac{3\epsilon}{4h_{\infty}^2(1+\epsilon)}\;,\quad
  C^2 = gh_{\infty}(1+\epsilon).
\end{equation}
Since we are not aware of a closed-form solitary wave solution of the hyperbolic SGN system, we determine a traveling wave profile numerically.
Substituting the usual ansatz with wave speed $c$, e.g., $h(t, x) = \tilde{h}(\xi)$ with $\xi = x - c t$, into the one-dimensional hyperbolic SGN equations in non-conservative form and with flat bathymetry,
\begin{equation}
  \begin{aligned}
  & h_t+(h u)_x=0, \\
  & h u_t+\frac{1}{2} g\left(h^2\right)_x+h u u_x+\left(\frac{\lambda}{3} \eta(1-\eta / h)\right)_x=0, \\
  & h w_t+h u w_x=\lambda(1-\eta / h), \\
  & \eta_t+\eta_x u=w,
  \end{aligned}
\end{equation}
we obtain
\begin{equation} \label{eq:system_solitary_wave}
  \begin{aligned}
  &-c \tilde{h}'+(\tilde{h} \tilde{u})'=0, \\
  & -c \tilde{h} \tilde{u}'+\frac{1}{2} g\left(\tilde{h}^2\right)' + \tilde{h} \tilde{u} \tilde{u}'+\left(\frac{\lambda}{3} \tilde{\eta} (1-\tilde{\eta} / \tilde{h})\right)'=0, \\
  & -c \tilde{h} \tilde{w}' +\tilde{h} \tilde{u} \tilde{w}'=\lambda(1-\tilde{\eta} / \tilde{h}), \\
  & -c \tilde{\eta}'+\tilde{\eta}' \tilde{u} = \tilde{w}.
  \end{aligned}
\end{equation}
When integrating with respect to $\xi$, we choose the integration constants for the variables $(\tilde{h},\tilde{u},\tilde{w},\tilde{\eta})$ as $(h_\infty,0,0,h_\infty)$.
Integrating the first equation of~\eqref{eq:system_solitary_wave} leads to
\begin{equation}
\label{eq:solitary_wave_h_u_relation}
- c \tilde{h} + \tilde{h} \tilde{u} = - c h_\infty.
\end{equation}
Using this to replace $\tilde{u}$ in the last equation of~\eqref{eq:system_solitary_wave} gives
\begin{equation}
-c \frac{h_\infty \tilde{\eta}'}{\tilde{h}} = \tilde{w}.
\end{equation}
Inserting this equation for $\tilde{w}$ into the third equation of~\eqref{eq:system_solitary_wave} yields
\begin{equation}
\tilde{\eta}'' + \frac{\lambda}{c^2 h_\infty^2} \tilde{\eta} = \frac{\lambda}{c^2 h_\infty^2} \tilde{h} + \frac{\tilde{h}'}{\tilde{h}} \tilde{\eta}'.
\end{equation}
Finally, writing the second equation of~\eqref{eq:system_solitary_wave} in conservative form, integrating, and inserting the expression for $\tilde{u}$ from~\eqref{eq:solitary_wave_h_u_relation} gives
\begin{equation}
-c^2 h_{\infty}\left(1-\frac{h_{\infty}}{\tilde{h}}\right)+\frac{1}{2} g \tilde{h}^2+\frac{\lambda}{3} \tilde{\eta}-\frac{\lambda}{3} \frac{\tilde{\eta}^2}{\tilde{h}}
- \frac{1}{2} g h_{\infty}^2 = 0.
\end{equation}
Denoting $\beta=\lambda / (c^2 h_\infty^2)$, we obtain the system
\begin{equation}
\begin{pmatrix}
 - \beta & \partial_\xi^2 + \beta \\
0 &  \lambda / 3
\end{pmatrix}
\begin{pmatrix}
\tilde{h} \\ \tilde{\eta}
\end{pmatrix}
=
\begin{pmatrix}
   (\partial_\xi \tilde{h}) (\partial_\xi \tilde{\eta}) / \tilde{h}  \\
    c^2 h_{\infty}\left(1-\frac{h_{\infty}}{\tilde{h}}\right) -\frac{1}{2} g \tilde{h}^2  + \frac{\lambda}{3} \frac{\tilde{\eta}^2}{\tilde{h}} + \frac{1}{2} g h_{\infty}^2
\end{pmatrix}
\end{equation}
or equivalently, but numerically better conditioned,
\begin{equation} \label{eq:solitary_wave_system}
\begin{pmatrix}
 -1 & \beta^{-1} \partial_\xi^2 + 1 \\
 0 &  1/3
\end{pmatrix}
\begin{pmatrix}
\tilde{h} \\ \tilde{\eta}
\end{pmatrix}
=
\begin{pmatrix}
   \beta^{-1} (\partial_\xi \tilde{h}) (\partial_\xi \tilde{\eta}) / \tilde{h}  \\
    \frac{c^2 h_{\infty}}{\lambda}\left(1-\frac{h_{\infty}}{\tilde{h}}\right) -\frac{g}{2\lambda} \tilde{h}^2  + \frac{1}{3} \frac{\tilde{\eta}^2}{\tilde{h}} + \frac{1}{2 \lambda} g h_{\infty}^2
\end{pmatrix}.
\end{equation}
We discretize this system in space using a Fourier collocation method that uses FFTW \cite{frigo2005design} wrapped by SummationByPartsOperators.jl~\cite{ranocha2021sbp}.
We initialize $\tilde{h} = \tilde{\eta}$ using the classical solitary wave solution~\eqref{eq:solitary_wave_analytical} (since we expect $\eta \to h$ as $\lambda \to \infty$, see Section~\ref{sect:sgn-review}).
The resulting nonlinear system is solved using the trust-region method provided by NLsolve.jl~\cite{NLsolve}, with a tolerance of $10^{-12}$ and up to $500$ iterations.

We choose $c = C$ from \eqref{eq:solitary_wave_parameters} with parameters $h_\infty = \SI{1}{m}$, $A = \SI{0.5}{m}$, and $g = \SI{9.81}{m/s^2}$, on the domain $[\SI{-40}{m}, \SI{40}{m}]$ with $\mathcal{N} = 2^{10}$ grid points to compute the solitary wave profile for various values of $\lambda$.
Figure~\ref{fig:convergence_solitary_wave} shows the maximum norm of the difference between the solitary wave profile of the hyperbolic SGN system and the classical analytical solitary wave solution of the original SGN equations, plotted as a function of the relaxation parameter $\lambda$.
As $\lambda$ increases, the error decreases as $\mathcal{O}(\lambda^{-1})$ for all components $(h, \eta, u, w)$.

\begin{figure}[htbp]
  \centering
  \includegraphics[width=0.6\textwidth]{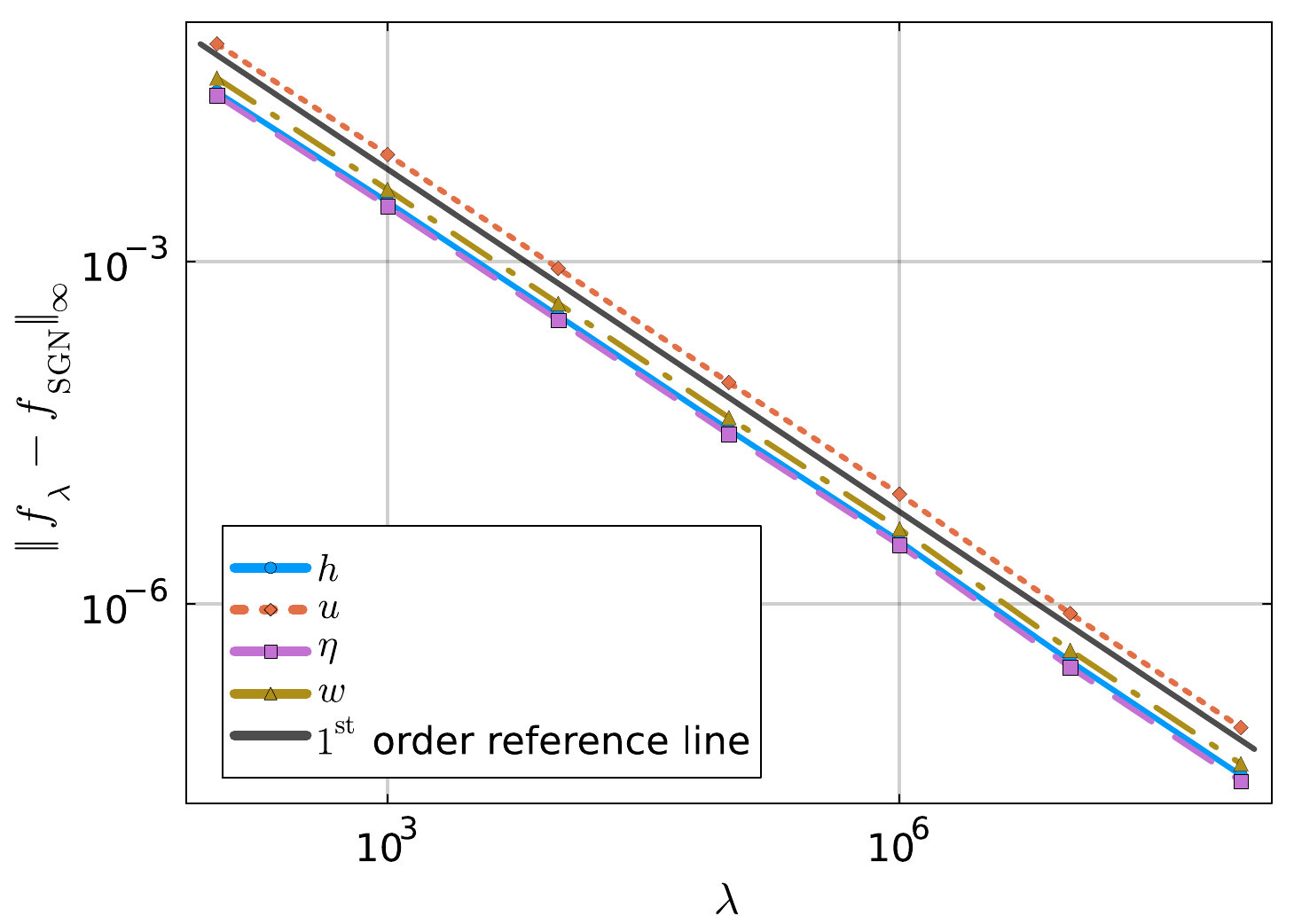}
  \caption{Maximum norm distance of the numerically computed traveling wave profile of the hyperbolization~\eqref{eq:solitary_wave_system} to the classical analytical soliton~\eqref{eq:solitary_wave_analytical} of the original SGN equations as a function of the relaxation parameter $\lambda$.
           The error decreases as $\mathcal{O}(\lambda^{-1})$, consistent with the asymptotic recovery of the original SGN equations as $\lambda \to \infty$.}
  \label{fig:convergence_solitary_wave}
\end{figure}

\section{Numerical Experiments}
\label{sec:numerical_experiments}

We have implemented the methods proposed in this work in Julia~\cite{julia2017}.
The spatial semi-discretization is implemented using vendor-agnostic kernels via
KernelAbstractions.jl~\cite{Churavy_KernelAbstractions_jl}, and time integration
is performed with OrdinaryDiffEq.jl~\cite{DifferentialEquations.jl-2017}; together,
these enable seamless execution on both CPU and GPU architectures with minimal
programming overhead. For visualization of our results, we use Plots.jl~\cite{PlotsJL}. All source code to reproduce our numerical results is available online~\cite{repo}. The code has been tested on two CPUs (Intel Core i7-1185G7, x86; Apple M4, ARM), an AMD GPU (Instinct MI210), and an NVIDIA GPU (H200).

The spatial discretization employs the second-order accurate finite difference SBP operators, as described in Remark~\ref{rem:concrete_sbp}.
Time integration is performed using an explicit third-order, five-stage
Runge--Kutta method with an embedded error estimator for adaptive step size
control. This method utilizes the \textit{First Same As Last} (FSAL) property
and has been specifically optimized for discretizations of hyperbolic
conservation laws when the time step size is constrained by stability rather
than accuracy~\cite{ranocha2021optimized}. The relative and absolute tolerances
of the time integrator are set to $10^{-6}$, and the relaxation parameter
$\lambda$ is set to 500 in all subsequent experiments unless otherwise
specified.

For one-dimensional setups, we uniformly repeat the computational domain in the
other direction to ensure compatibility with our two-dimensional equations.
This effectively reduces the problem to a one-dimensional one, with the same
solution in the other direction, as all cross terms vanish.

Unless otherwise stated, we use SI units for all quantities. The gravitational
acceleration is set to $g = \SI{9.81}{m/s^2}$.
For all experiments, we initialize the auxiliary variables of the hyperbolic
approximation as
\begin{equation}
\boldsymbol{\eta} = \boldsymbol{h}, \quad \boldsymbol{w} = -\boldsymbol{h} \cdot (D_x \boldsymbol{u} + D_y \boldsymbol{v}) + \frac{3}{2}(\boldsymbol{u} D_x \boldsymbol{b} + \boldsymbol{v} D_y \boldsymbol{b}).  \label{eq:initialization}
\end{equation}

\subsection{Convergence Studies}

To assess the spatial accuracy of our methods, we conduct convergence studies
using analytical and manufactured solutions.

\subsubsection{One-Dimensional Solitary Wave Solutions}
\label{sec:convergence_1d_soliton}

Solitary wave solutions provide a useful test case, even for our two-dimensional
implementation when the wave propagates in one spatial direction while
remaining constant in the other. We consider two complementary validation
scenarios based on the two solitary wave solutions available to us. In the
first, we set $\lambda = 30\,000$ to render the influence of the hyperbolic
approximation negligible, so that the two-dimensional hyperbolic system closely
approaches the two-dimensional Serre--Green--Naghdi equations and the classical
analytical solution~\eqref{eq:solitary_wave_analytical} of the original SGN
equations can be used as a reference, as shown in Figure~\ref{fig:convergence_solitary_wave}. In the second, we choose $\lambda = 50$
and validate against the numerically computed solitary wave profile of the
hyperbolic SGN system obtained from equation~\eqref{eq:solitary_wave_system}, which is an exact
traveling-wave solution of the hyperbolic system for any value of $\lambda$.

For both cases, the computational domain in one direction is $[-30, 30]$ and is
uniformly repeated in the other direction to ensure a two-dimensional setup,
using periodic boundary conditions in both directions. The time interval is
chosen such that the wave traverses the domain exactly once.

The convergence results are shown in Figure~\ref{fig:convergence_1d_x}. As
expected, second-order convergence is achieved for both water height and
velocity in both cases. The convergence results for propagation in the
$y$-direction are identical.

\begin{figure}[htbp]
  \centering
  \includegraphics[width=1.0\textwidth]{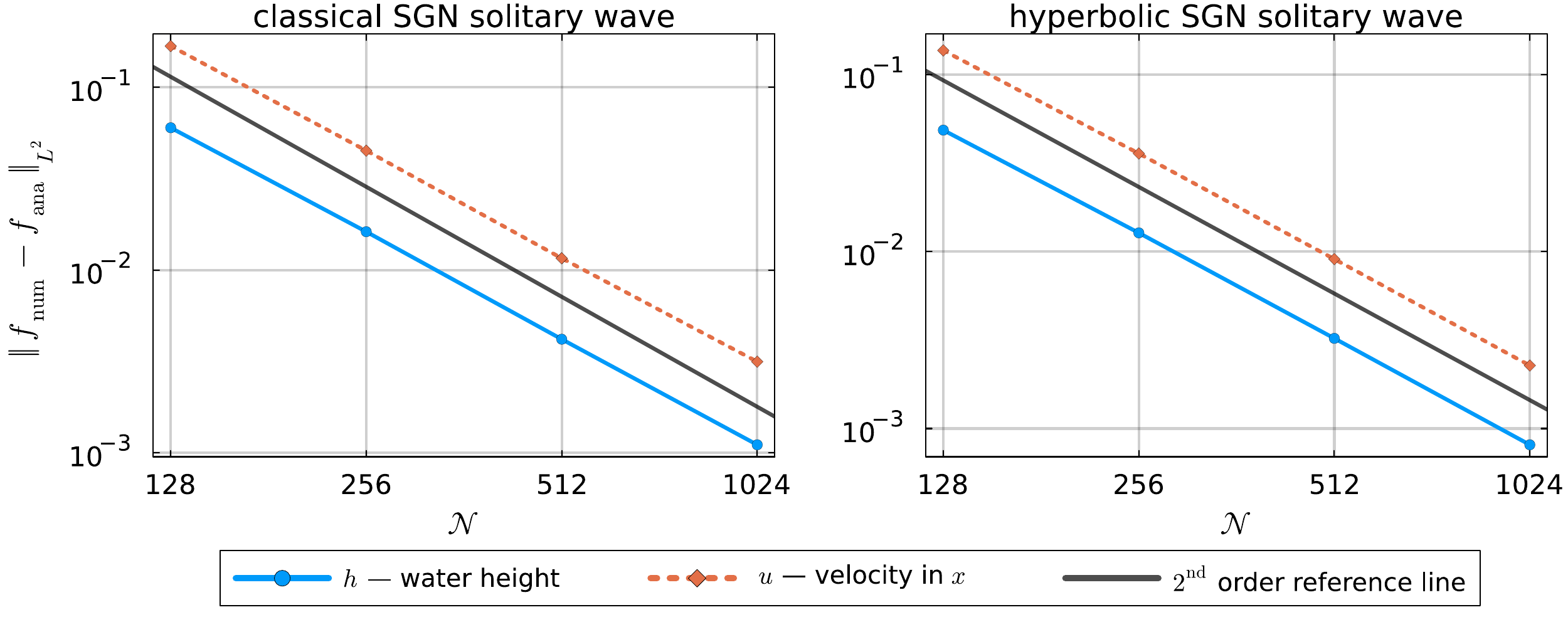}
  \caption{Convergence study for a one-dimensional solitary wave propagating in
    the $x$-direction and repeated in $y$. The computational domain in $x$ is
    $[-30, 30]$. Left: $\lambda = 30\,000$ using the analytical SGN
    solution~\eqref{eq:solitary_wave_analytical} as reference solution.
    Right: $\lambda = 50$ using the numerically computed solitary wave profile
    of the hyperbolic SGN system
    (cf.~\eqref{eq:solitary_wave_system}) as reference solution.
    Second-order convergence is achieved for both water height $h$ and velocity
    $u$ in both cases.}
  \label{fig:convergence_1d_x}
\end{figure}

While this test validates our implementation against both an analytical
reference and the numerically computed traveling-wave profile, we note that
many cross-derivative terms vanish when the wave propagates in only one
direction. Furthermore, reflecting boundary conditions are not exercised in
this test.

\subsubsection{Manufactured Solution for the Two-Dimensional System}
\label{sec:manufactured_solution}

To thoroughly validate the complete two-dimensional implementation with both
periodic and reflecting boundary conditions, we employ the method of
manufactured solutions. We choose the solution
\begin{align*}
  b    & = \frac{8}{100} \left( \cos(2 \pi x) \cos(2 \pi y) + \frac{1}{2} \cos(4 \pi x) \cos(4 \pi y) \right), \\
  h    & = 2 + \frac{1}{2} \sin(2\pi x) \sin(2\pi y) \cos(2\pi t) - b,                                         \\
  u    & = \frac{3}{10} \sin(2\pi x) \sin(2\pi t),                                                             \\
  v    & = \frac{3}{10} \sin(2\pi y) \sin(2\pi t),                                                             \\
  \eta &= h,                                                                                                   \\
   w   &= -h \cdot (\partial_x u + \partial_y v) + \frac{3}{2} \cdot (u\partial_x b + v\partial_y b)
\end{align*}
and add source terms to the equations so that these functions satisfy the governing
equations exactly. The computational domain is $[-1, 1] \times [-1, 1]$, and we analyze the grid convergence of the error at the final time $t=1$.

The results shown in Figure~\ref{fig:man_solution_convergence} confirm the
expected second-order convergence in all variables for both reflecting and
periodic boundary conditions. This comprehensive test validates our
implementation of all terms in the two-dimensional equations, including the
reflecting boundary treatment.

\begin{figure}[htbp]
  \centering
  \includegraphics[width=1.0\textwidth]{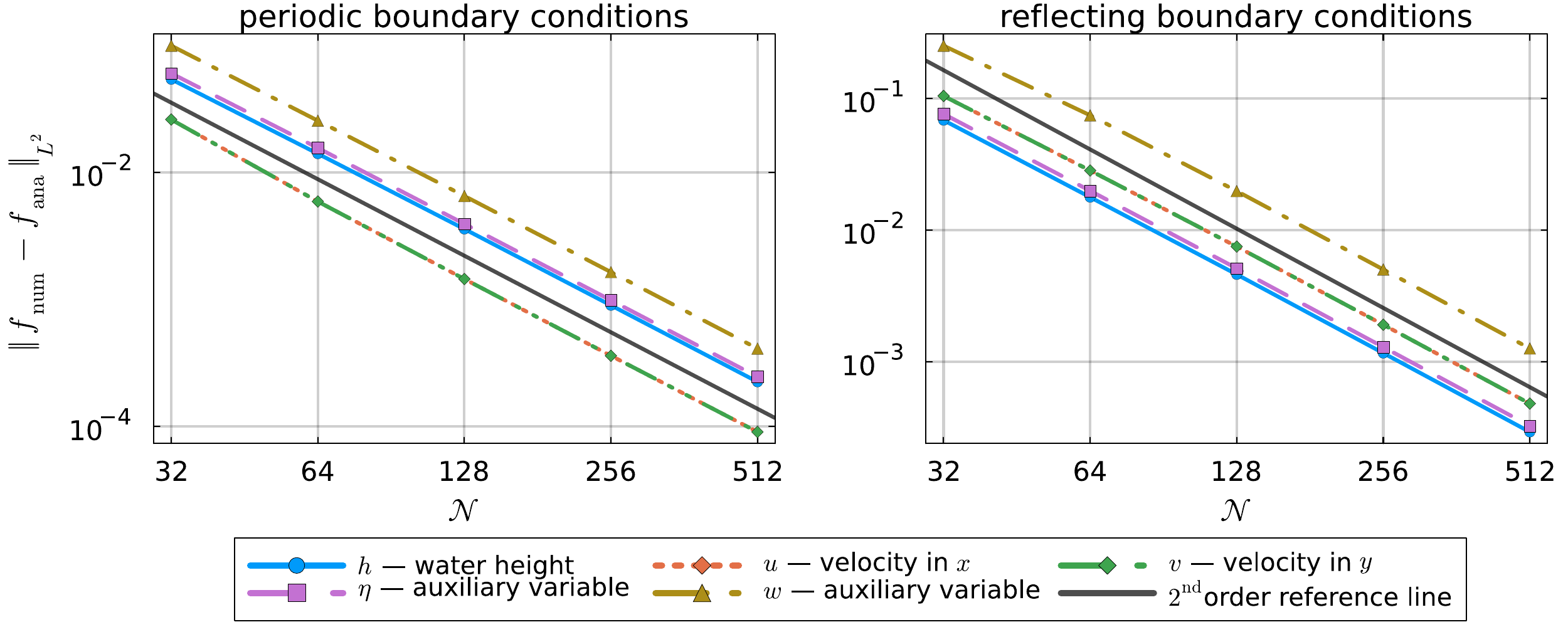}
  \caption{Convergence study using the method of manufactured solutions on the domain
    $[-1, 1] \times [-1, 1]$ with $\lambda = 500$ at final time $t=1$.
    Left: periodic boundary conditions. Right: reflecting boundary conditions.
    Second-order convergence is achieved for water height $h$, velocities
    $u$ and $v$, and auxiliary variables $\eta$ and $w$.}
  \label{fig:man_solution_convergence}
\end{figure}

\subsection{Semi-Discrete Energy Conservation}
\label{sec:energy_conservation}

As demonstrated in Theorem~\ref{theorem:conservation_reflecting}, the spatial
semi-discretization preserves the discrete energy exactly. While standard
explicit Runge--Kutta methods do not conserve nonlinear invariants such as
energy in the fully discrete scheme, a semi-discretization that conserves energy
exactly ensures that energy errors arise solely from time integration. This
typically leads to improved long-term qualitative behavior and numerical
stability. To verify the correctness of our implementation, we numerically
evaluate the time rate of change of energy directly from the semi-discretization
and confirm that it vanishes to machine precision.

We perform this verification using the setup from
Section~\ref{sec:gaussian_obstacle}, in which a solitary wave travels over a
Gaussian-shaped obstacle, providing a nontrivial fully two-dimensional flow
with wave-bathymetry interactions. Figure~\ref{fig:partial_energy_con} shows
the computed values of $\langle \partial_{\boldsymbol{q}} \boldsymbol{E}, \partial_t \boldsymbol{q}
  \rangle_M$ for both periodic and reflecting boundary conditions over an
extended simulation time. The energy time derivative remains at the level of
floating-point errors, confirming that the spatial semi-discretization conserves
energy to within machine precision. This validates both the theoretical
analysis and the correctness of our numerical implementation.

\begin{figure}[!htbp]
  \centering
  \includegraphics[width=1.0\textwidth]{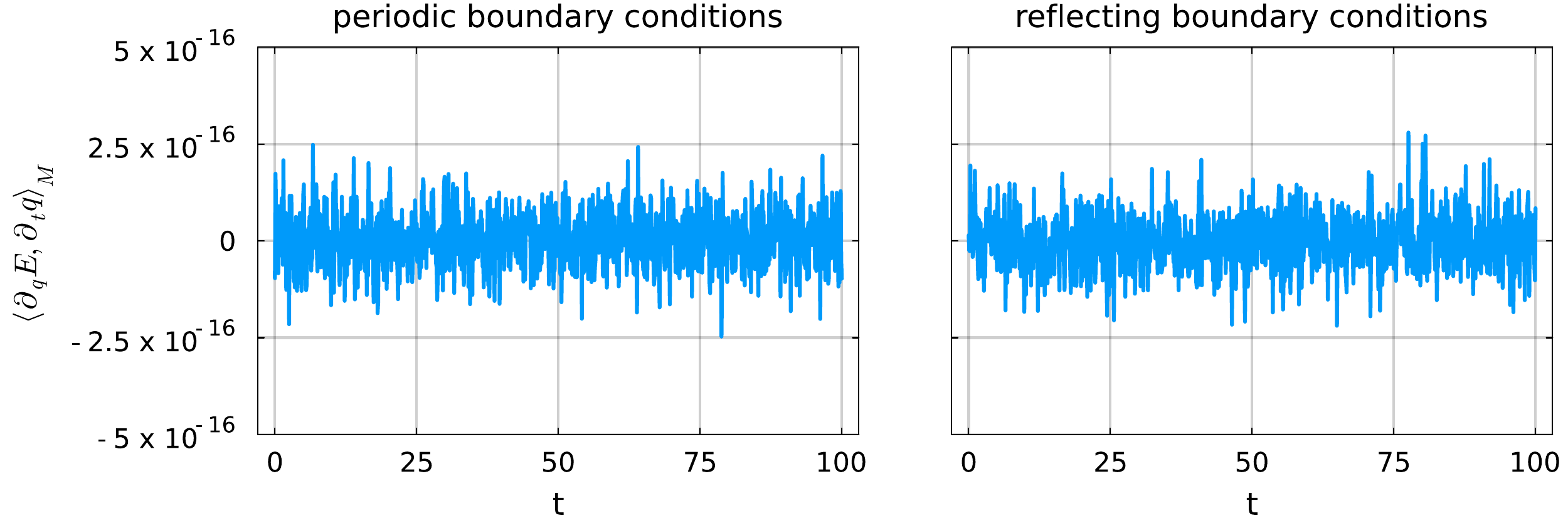}
  \caption{Numerical verification of energy conservation in the spatial semi-discretization using the wave over Gaussian test case from Section~\ref{sec:gaussian_obstacle}. The quantity $\langle \partial_{\boldsymbol{q}} \boldsymbol{E}, \partial_t \boldsymbol{q} \rangle_M$ is computed numerically. The energy time derivative fluctuates at machine precision throughout the simulation, confirming exact energy conservation of the spatial semi-discretization.}
  \label{fig:partial_energy_con}
\end{figure}

\subsection{Dingemans Experiment}
\label{sec:dingemans}

We validate our numerical model by comparing its results with
experimental data from Dingemans~\cite{Dingemans1994, Dingemans1997}. This
experiment involves wave propagation over a submerged trapezoidal bar and has
become a standard benchmark for dispersive wave models.
The bathymetry and initial condition, along with the positions of the six wave
gauges, are shown in Figure~\ref{fig:dingemans_bathymetry_initial_condition}.

\begin{figure}[!htb]
  \centering
  \includegraphics[width=0.6\textwidth]{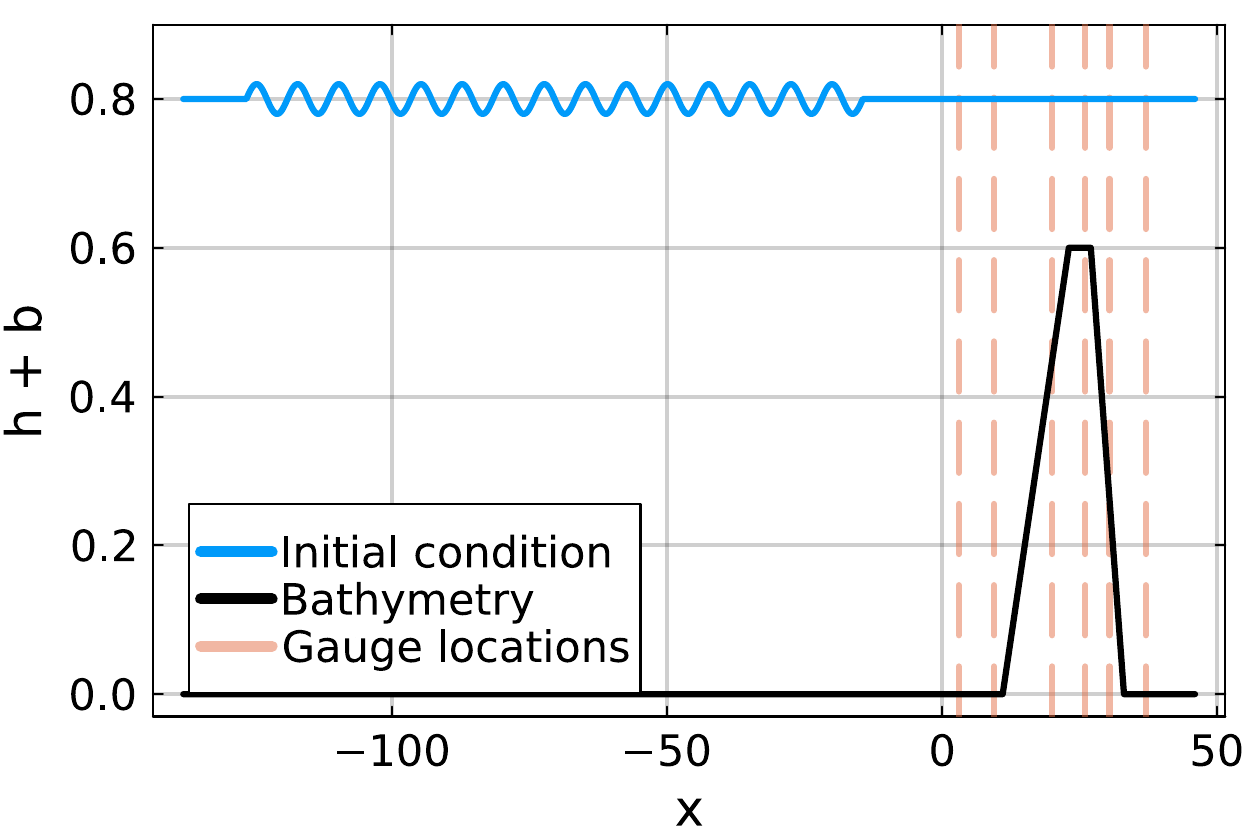}
  \caption{Initial setup for the Dingemans experiment showing the trapezoidal
    bathymetry $b(x)$, initial water surface elevation $h(x,0) + b(x)$,
    and locations of the six wave gauges (vertical dashed lines).
    The domain extends from $x = -138$ to $x = 46$ with periodic
    boundary conditions.}
  \label{fig:dingemans_bathymetry_initial_condition}
\end{figure}

Dingemans' original experiment~\cite{Dingemans1994, Dingemans1997} employed a
wave maker at $x = 0$ to generate waves with amplitude $A = 0.02$ moving to the
right. For our numerical simulation, we use a spatial domain of $[-138, 46]$ in
the $x$-direction with periodic boundary conditions. The setup is uniformly
repeated in the $y$-direction to ensure a two-dimensional implementation,
effectively reducing to a one-dimensional problem. Due to the second-order spatial accuracy of our implementation, we
employ a relatively fine grid resolution of $\Delta x = 0.008$ to ensure
adequate resolution of the wave dynamics.

The initial condition consists of a constant water depth $h + b = 0.8$ with a
sinusoidal perturbation of amplitude $A = 0.02$. The phase of the perturbation
and corresponding velocity field are determined from the linear dispersion
relation of the Euler equations, following the approach of~\cite{svard2025novel,
  lampert2025structure}. The horizontal offset of the perturbation is adjusted manually to
achieve good phase agreement with the experimental data at the first wave
gauge.

The bottom topography is defined by a trapezoidal bar
\begin{equation}
  b(x) = \begin{cases}
    0.6 \cdot \dfrac{x - 11.01}{12.03} & \text{if } 11.01 \leq x < 23.04, \\[0.2em]
    0.6                                & \text{if } 23.04 \leq x < 27.04, \\[0.2em]
    0.6 \cdot \dfrac{33.07 - x}{6.03}  & \text{if } 27.04 \leq x < 33.07, \\[0.2em]
    0                                  & \text{otherwise.}
  \end{cases}
\end{equation}

Figure~\ref{fig:dingemans_results} compares the numerical solution with the
experimental data at the six wave gauges. The agreement is excellent at the first
four gauges, which are located before and on the trapezoidal bar. At the final
two gauges, positioned beyond the bar, the agreement is somewhat reduced,
though the numerical solution still captures the correct amplitude and
qualitative behavior of the waves. This behavior is related
to the poor dispersive properties of the SGN equations, and
matches previous results obtained with non-enhanced Boussinesq-type models
in the literature (see, e.g., \cite{FILIPPINI2015109,ranocha2025structure}).

\begin{figure}[htb]
  \centering
  \includegraphics[width=0.8\textwidth]{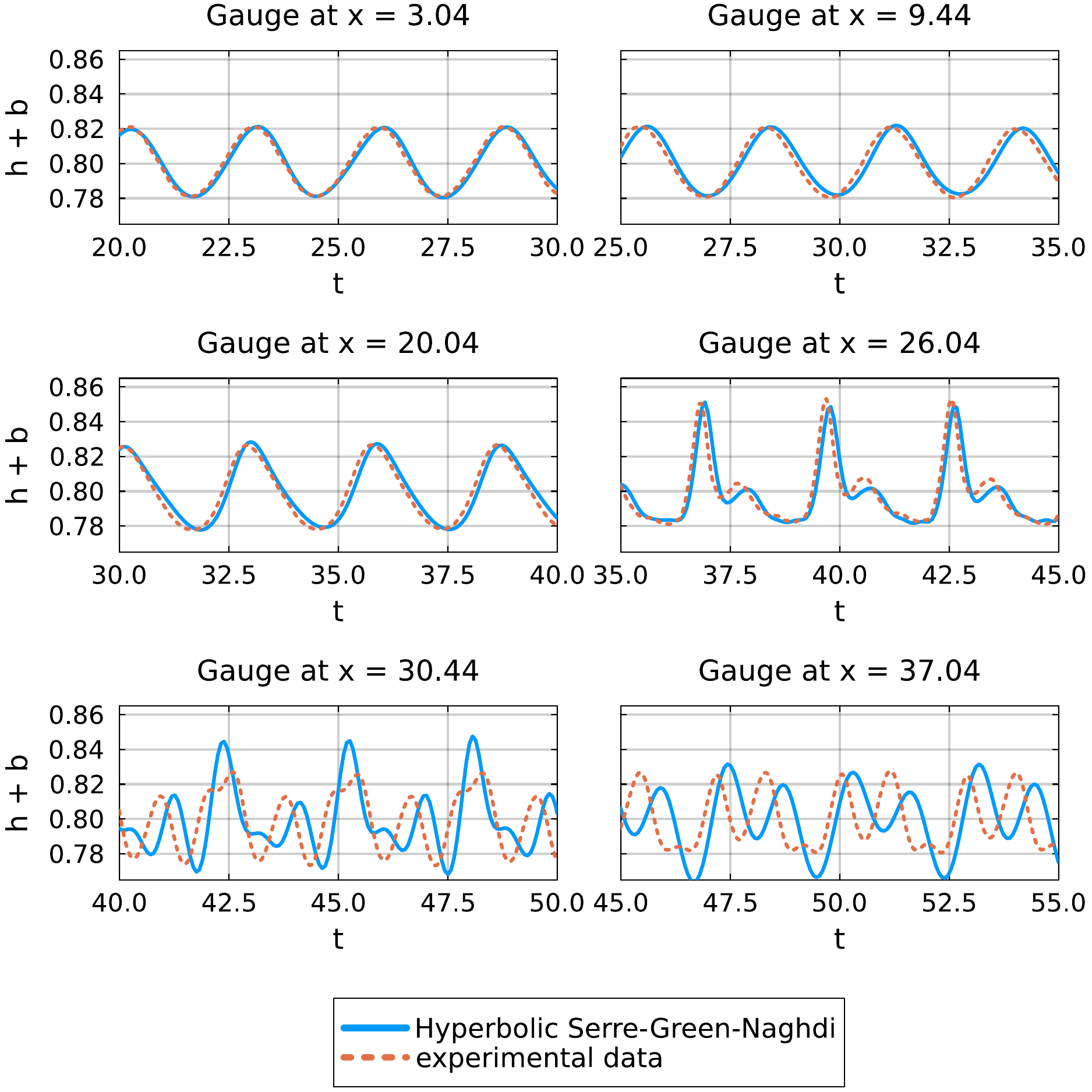}
  \caption{Comparison of numerical solution (solid blue line) with experimental data from Dingemans~\cite{Dingemans1994, Dingemans1997} (orange dotted line) at six wave gauge locations. The numerical solution shows excellent agreement at gauges 1--4 and reasonable agreement at gauges 5--6.}
  \label{fig:dingemans_results}
\end{figure}

\begin{figure}[htbp]
  \centering
  \includegraphics[width=1.0\textwidth]{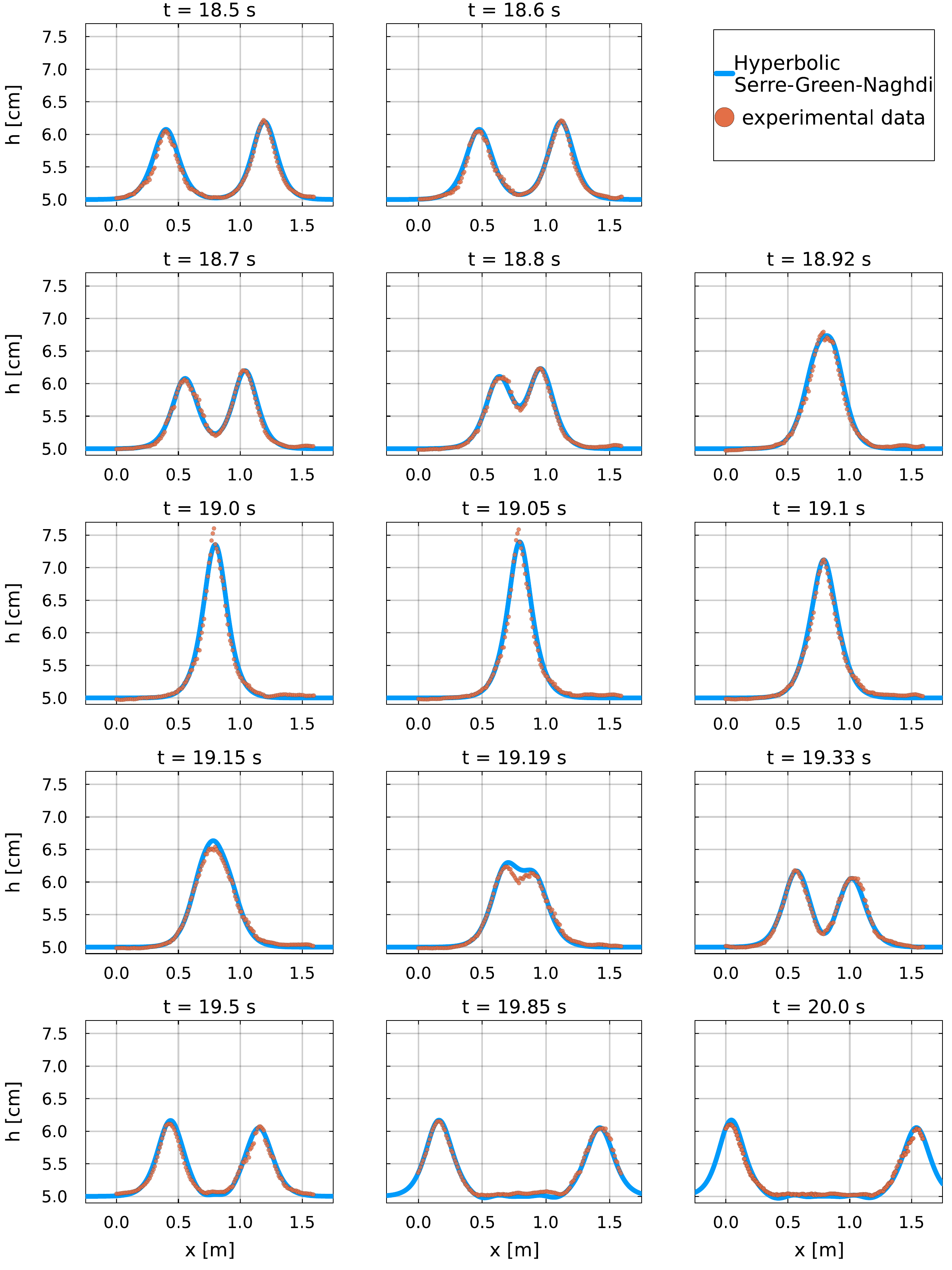}
  \caption{Head-on collision of two solitary waves. Comparison of numerical
    solution (solid blue line) with experimental data from~\cite{wave_interactions} (orange dots) at various times during
    the collision process. The computational domain is $[-10, 10]$
    with initial wave amplitudes $A = 0.01077$ (right-moving) and
    $A = 0.01195$ (left-moving). Very good agreement is observed
    throughout the collision and separation phases.}
  \label{fig:colliding_waves}
\end{figure}

\subsection{Head-on Collision of Solitary Waves}
\label{sec:colliding_waves}

To further validate our numerical model, we simulate the head-on collision of
two solitary waves and compare with experimental data
from~\cite{wave_interactions}. This test has already been used previously to
validate various wave models as in~\cite[Section~4.2]{DUTYKH20113035}
or~\cite[Section~5.2.1]{DUTYKH_CLAMOND_MILEWSKI_MITSOTAKIS_2013}.

The computational domain in the $x$-direction is $[-10, 10]$, chosen
sufficiently large to avoid boundary effects on the region of interest. As in
the previous test, the domain is uniformly extended in the $y$-direction for
the two-dimensional code, with vanishing cross-derivative terms. The initial
condition consists of two solitary waves: one with amplitude $A = 0.01077$
centered at $x = 0.4$ moving rightward, and another with amplitude $A =
  0.01195$ centered at $x = 1.195$ moving leftward. The simulation begins at $t =
  \SI{18.5}{s}$ to match the experimental timing.

Figure~\ref{fig:colliding_waves} shows very good agreement between our
numerical solution and the experimental data throughout the collision process.
The numerical method accurately captures the wave amplification during
collision, the subsequent separation, and the formation of the trailing wave
train. This validates both the hyperbolic approximation of the
Serre--Green--Naghdi equations and our numerical implementation.

\subsection{Riemann Problem}
\label{sec:riemann_problem}

Motivated by the setup in~\cite{tkachenko2023extended,ranocha2025structure}, we consider a
Riemann-type configuration with a smoothed jump given by
\begin{equation}
  h(x,0) = h_R + \frac{h_L - h_R}{2}\bigl(1 - \tanh(x/2)\bigr),
  \qquad
  u(x,0) = 0.
\end{equation}
Using the Riemann invariants of the shallow-water equations together with the
Whitham modulation theory for the SGN system~\cite{ElGrimshawSmyth2006,gavrilyuk2020stationary,tkachenko2023extended}, one can obtain
approximate expressions for the mean state $\left(h^*,u^*\right)$ separating
the rarefaction region from the dispersive shock zone
\begin{equation}
  h^* = \frac{\bigl(\sqrt{h_L} + \sqrt{h_R}\bigr)^2}{4},
  \qquad
  u^* = 2\bigl(\sqrt{g h^*} - \sqrt{g h_R}\bigr),
  \qquad
  a^{+} = \delta_0 - \frac{1}{12}\delta_0^2 + \mathcal{O}(\delta_0^3).
\end{equation}
Here $a^{+}$ denotes a second-order asymptotic approximation of the leading soliton amplitude with $\delta_0 = \vert h_L - h_R \vert$. In the following, we choose $h_L = 1.8$ and $h_R = 1.0$ and solve up to time $t = 47.434$. The spatial domain in the $x$-direction is $[-600,600]$ with $\Delta x = 0.3$.
As before, the setup is uniformly repeated in the $y$-direction.
For these values of $h_L$ and $h_R$, we theoretically expect $h^{*} \approx 1.37$ and $a^{+} \approx 0.747$.
Note that $a^{+}$ is the amplitude above the background height of $1$.
Hence, the maximum water height is $h^m = 1 + a^{+} \approx 1.74$.
Figure~\ref{fig:riemann_problem} displays the solution in the interval $[-300,300]$. Both $h^m$ and $h^*$ are in good agreement with the theoretical predictions.

\begin{figure}[H]
  \centering
  \includegraphics[width=0.6\textwidth]{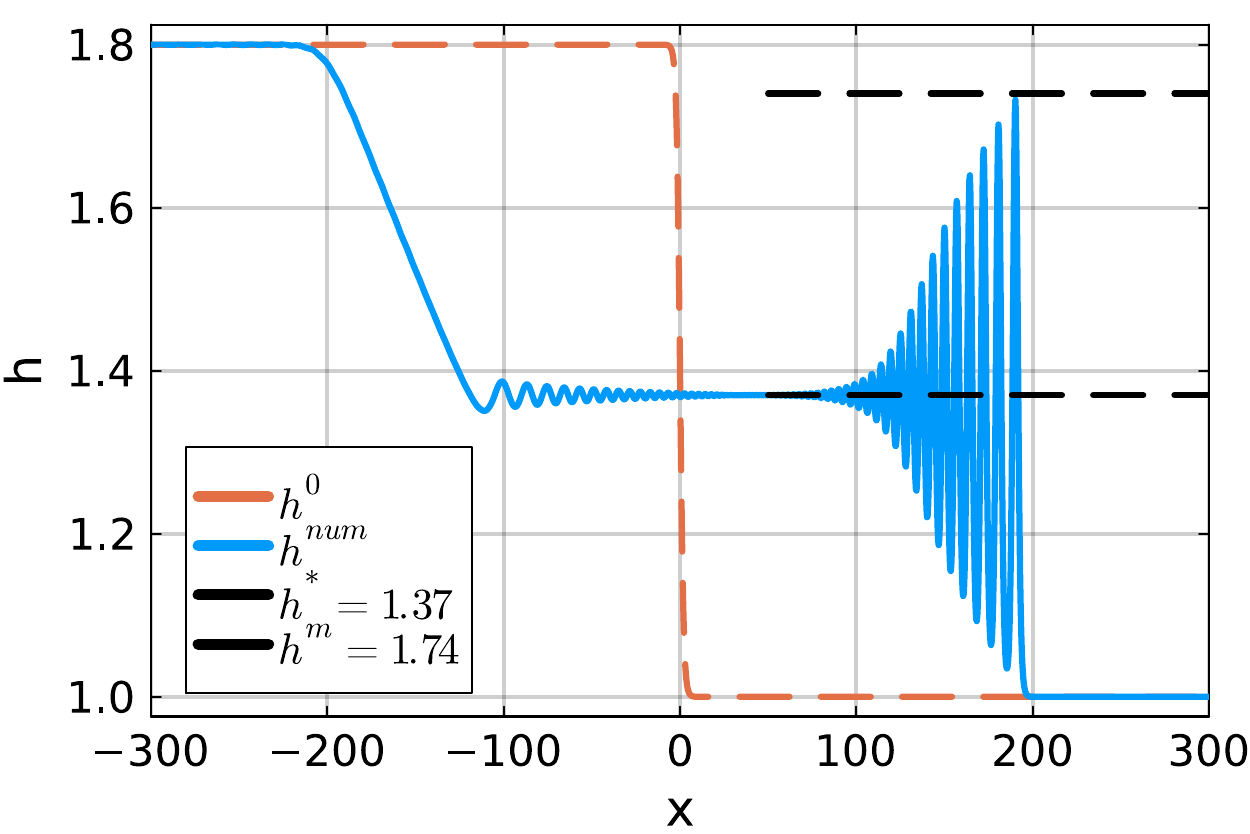}
  \caption{Riemann problem solved on the domain $[-600,600]$ with $\Delta x = 0.3$. The theoretically predicted intermediate water height $h^* \approx 1.37$ and the leading soliton amplitude $h^m \approx 1.74$ are well captured by the numerical solution.}
  \label{fig:riemann_problem}
\end{figure}

\subsection{Favre Waves}
\label{sec:Favre_waves}

We also test our semi-discretization using Favre waves, following the setups in~\cite{Chassagne_Filippini_Ricchiuto_Bonneton_2019,JOUY2024170}. A
bore represents a transition between two streams with different water depths.
The intensity of such a bore can be characterized by the Froude number
\begin{equation}\label{eq:froude_numbers}
  \text{Fr} = \sqrt{(1+\varepsilon)(1+\varepsilon/2)},
\end{equation}
where $\varepsilon$ denotes the
nonlinearity parameter. Larger values of $\varepsilon$ correspond to greater
height differences between the water streams. For small Froude numbers
($\text{Fr} \leq 1.3$), the bore exhibits a smooth profile followed by undular
wave trains, whereas larger Froude numbers lead to increasingly steep bores
that eventually form breaking waves. In this work, we focus on bores with small
Froude numbers, commonly referred to as Favre waves.

The initial condition for this setup is
\begin{equation}
  \begin{aligned}
     & h(x, t=0):=h_0+\frac{\llbracket h \rrbracket}{2}\left\{1-\tanh \left(\frac{x-x_0}{\alpha}\right)\right\}, \\
     & u(x, t=0):=u_0+\frac{\llbracket u \rrbracket}{2}\left\{1-\tanh \left(\frac{x-x_0}{\alpha}\right)\right\},
  \end{aligned}
\end{equation}
where
\begin{equation}
  \llbracket h \rrbracket:=\varepsilon h_0,
\end{equation}
with $\varepsilon$ being the nonlinearity parameter. The velocity jump $\llbracket u
  \rrbracket$ satisfies the shallow-water Rankine--Hugoniot relations, in
particular
\begin{equation}
  \llbracket u \rrbracket=\sqrt{g \frac{h_1+h_0}{2 h_0 h_1}} \llbracket h \rrbracket,
\end{equation}
where $h_1 = h_0 + \llbracket h \rrbracket = h_0(1+\varepsilon)$ is the water depth on the downstream side of the bore.
We use $h_0 = 0.2$, $\alpha = 5 h_0 = 1.0$, $x_0 = 0$, and $u_0 = 0$.

\subsubsection{Evolution of the Free-Surface Elevation}

First, we study how the bore profile evolves over time for different nonlinearity values.
The computational domain in the $x$-direction is chosen as $[-50, 50]$ with
periodic boundary conditions. We use a grid spacing of $\Delta x = 0.05$ to
adequately resolve the wave dynamics. As in the previous tests, the setup is
uniformly repeated in the $y$-direction, effectively reducing to a
one-dimensional problem.

Following~\cite{Chassagne_Filippini_Ricchiuto_Bonneton_2019, JOUY2024170}, we
examine the short-time bore evolution for three nonlinearity values
$\varepsilon \in \{0.1,0.2,0.3\}$ and compare the free-surface elevation at
different dimensionless times $\tilde{t}:=t \sqrt{g / h_0}$ with fully
nonlinear potential flow solutions from~\cite{Wei_Kirby_Grilli_Subramanya_1995}. Figure~\ref{fig:Favre_waves} shows our
results, which agree well with the fully nonlinear potential flow solutions. For
larger values, such as $\varepsilon=0.3$, some limitations become apparent due
to the weakly dispersive character of the model.

\begin{figure}[H]
  \centering
  \includegraphics[width=1.0\textwidth]{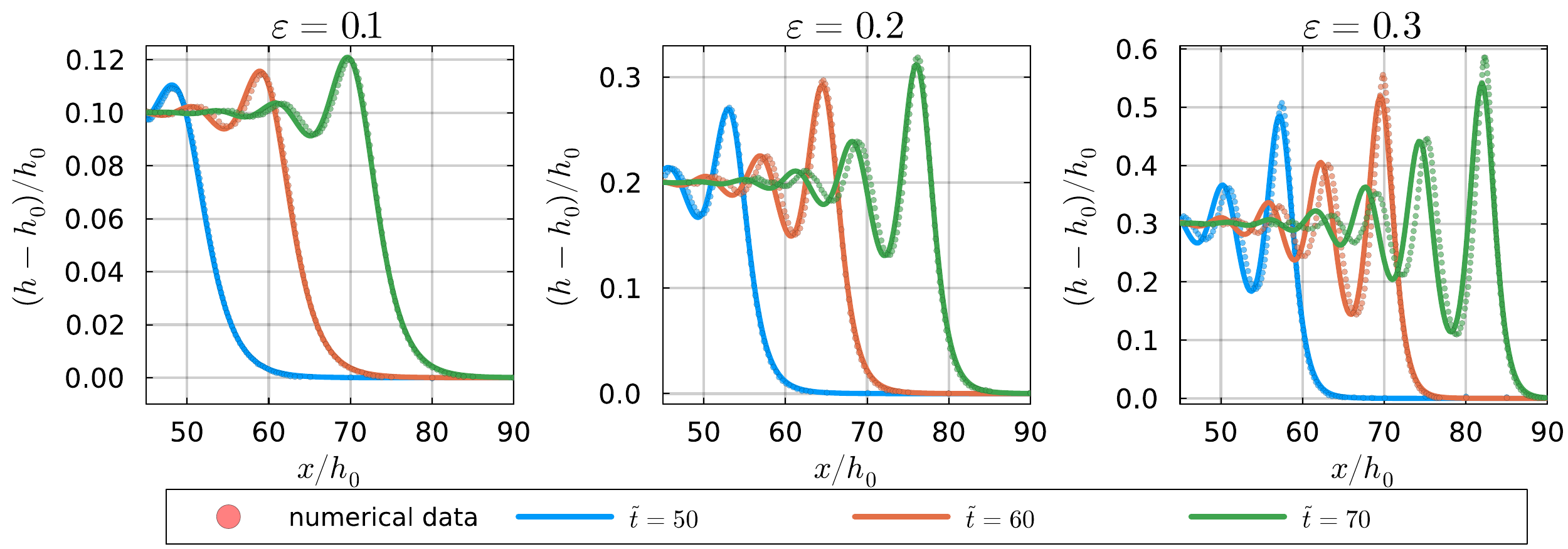}
  \caption{Free-surface elevation at different dimensionless times
    $\tilde{t}:=t \sqrt{g / h_0}$ for three nonlinearity values
    $\varepsilon \in \{0.1,0.2,0.3\}$. The numerical solutions (solid lines)
    are compared with fully nonlinear potential flow solutions from
    \cite{Wei_Kirby_Grilli_Subramanya_1995} (data points). The computational
    domain is $[-50, 50]$ with $\Delta x = 0.05$. Good agreement is observed
    for smaller nonlinearity values, with some deviations for
    $\varepsilon=0.3$ related to the weakly dispersive character of the model.}
  \label{fig:Favre_waves}
\end{figure}

\subsubsection{Comparison with Experimental Data}

To further validate our approach, we compare the maximal wave amplitude as a
function of the Froude number with experimental measurements.

We compare the maximal amplitude of the Favre waves after the first wave
has traveled a distance of \SI{63.5}{m}, matching the experimental setup of
Favre~\cite{Favre1935} and Treske~\cite{Treske1994}. The computational
domain in the $x$-direction is chosen as $[-150, 150]$ with periodic boundary
conditions and a grid spacing of $\Delta x = 0.075$. The results are shown in
Figure~\ref{fig:Froude_numbers}. The simulations are run for
$\text{Fr} \leq 1.25$, giving
good agreement with the experimental data. For larger Froude numbers, wave breaking occurs,
which cannot be modeled by the SGN equations alone~\cite{Kazolea_Ricchiuto_2018}.

\begin{figure}[H]
  \centering
  \includegraphics[width=0.6\textwidth]{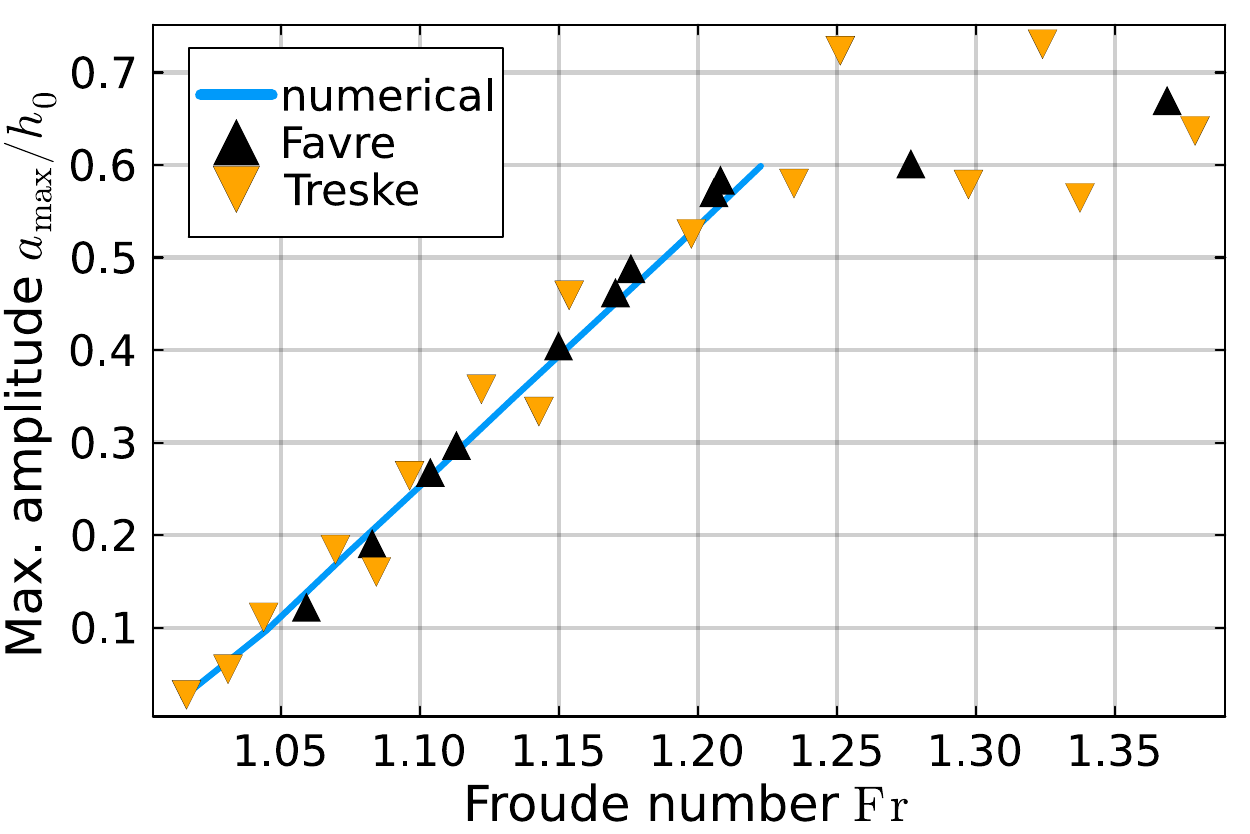}
  \caption{Maximal amplitude of the Favre waves after traveling a fixed distance versus
    Froude number. The numerical results (solid line) are compared with
    experimental data from Favre~\cite{Favre1935} and Treske~\cite{Treske1994}.
    Good agreement is observed for $\text{Fr} \leq 1.25$, with numerical
    simulations breaking down for larger Froude numbers due to wave breaking
    not captured by the SGN equations. The computational domain is $[-150, 150]$ with $\Delta x = 0.075$.}
  \label{fig:Froude_numbers}
\end{figure}

\subsection{Reflection of Solitary Waves from a Vertical Wall}
\label{sec:reflecting_wall}

To validate the implementation of reflecting boundary conditions, we compare
our numerical results with those reported in~\cite[Section~4.2]{Mitsotakis}.

The computational domain in the $x$-direction is $[-100, 0]$ with $\Delta x =
  0.1$. As before, the setup is uniformly repeated in the $y$-direction. The initial
condition is a solitary wave of amplitude $A$ centered at $x = -50$ propagating
rightward toward the vertical wall at $x = 0$. We consider two amplitudes: $A = 0.075$ and $A = 0.65$.

Following~\cite{Mitsotakis}, we present results in dimensionless time units,
where the dimensionless time $t^*$ relates to physical time $t$ (in seconds)
through $t = t^* \sqrt{h_{\infty}/g}$ with $h_{\infty} = \SI{1}{m}$ being the constant
water depth away from the wave and $g = \SI{9.81}{m/s^2}$ still denoting the gravitational acceleration.

Figures~\ref{fig:reflecting_wave_A_0.075} and~\ref{fig:reflecting_wave_A_0.65}
show excellent agreement between our numerical solution and the reference data
from~\cite{Mitsotakis}. Minor deviations only occur in
Figure~\ref{fig:reflecting_wave_A_0.65} at dimensionless times $t^* = 38$ and
$t^* = 42$. We note that the original paper~\cite{Mitsotakis} reported one
snapshot at $t^* = 41$ rather than $t^* = 42$; after consultation with the
authors, this was confirmed to be a typographical error in the original
publication. This validation demonstrates the accuracy of both the hyperbolic
approximation and our implementation of reflecting boundary conditions.

\begin{figure}[htb]
  \centering
  \includegraphics[width=0.9\textwidth]{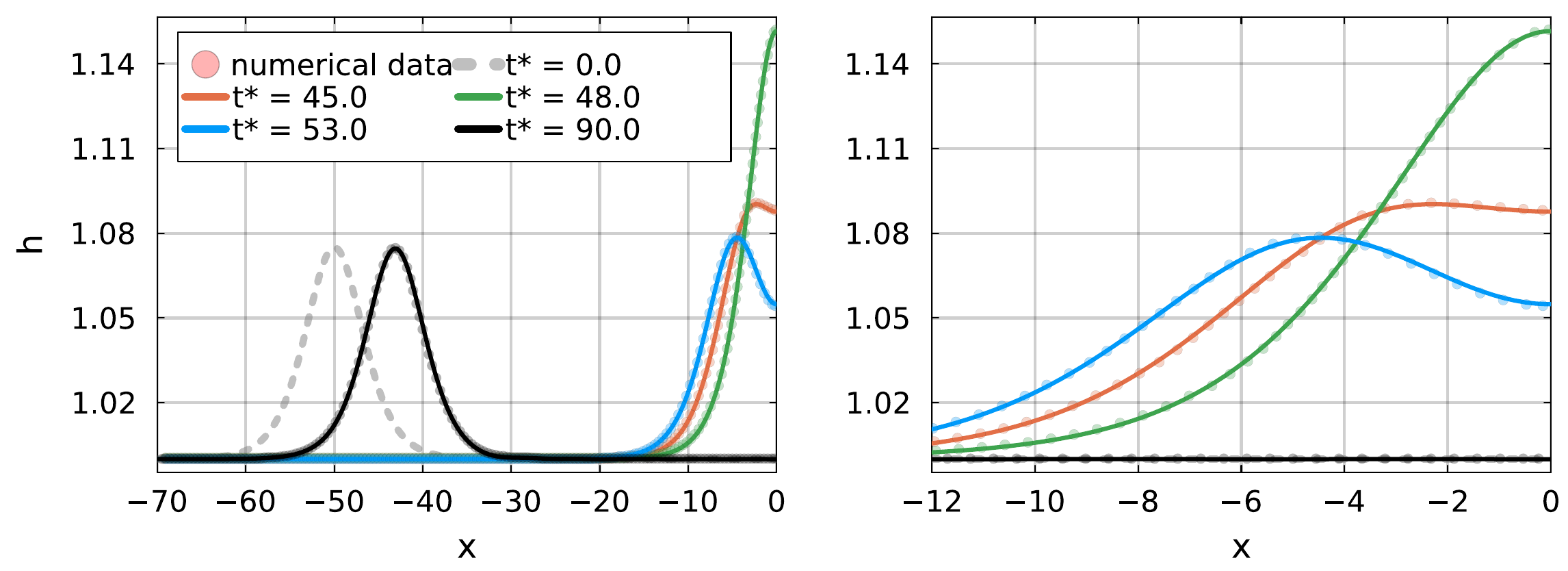}
  \caption{Reflection of a solitary wave with amplitude $A = 0.075$ from a vertical
    wall at $x = 0$. The dots represent numerical data from
    \cite{Mitsotakis}, and solid lines show our numerical
    solution at dimensionless times $t^* = 24, 45, 48, 53, 90$.
    Left: full domain. Right: zoom near the wall. Domain: $[-100, 0]$
    with $\Delta x = 0.1$ and $h_{\infty} = 1$.}
  \label{fig:reflecting_wave_A_0.075}
\end{figure}

\begin{figure}[htb]
  \centering
  \includegraphics[width=0.9\textwidth]{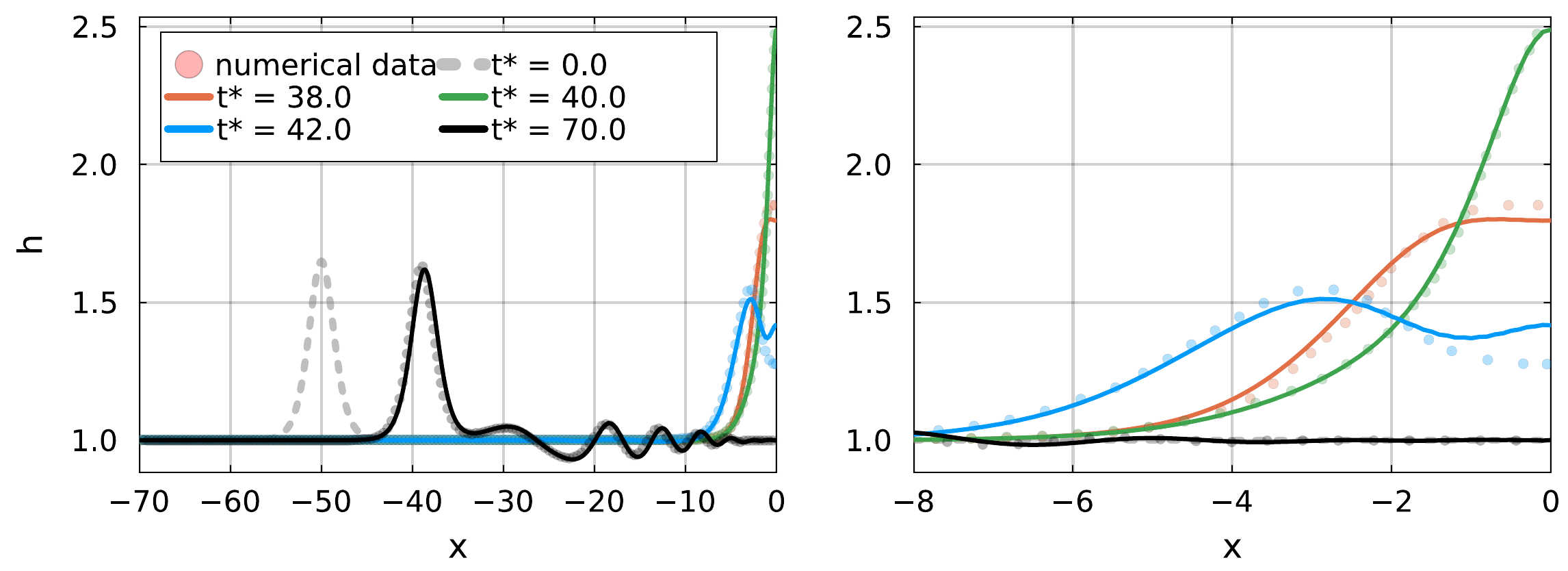}
  \caption{Reflection of a solitary wave with amplitude $A = 0.65$ from a vertical
    wall at $x = 0$, representing strongly nonlinear reflection. The dots
    represent numerical data from~\cite{Mitsotakis}, and solid
    lines show our numerical solution at dimensionless times
    $t^* = 0, 28, 38, 42, 70$. Left: full domain. Right: zoom near the wall.
    Domain: $[-100, 0]$ with $\Delta x = 0.1$ and $h_{\infty} = 1$.}
  \label{fig:reflecting_wave_A_0.65}
\end{figure}

\subsection{Solitary Wave over a Gaussian Obstacle}
\label{sec:gaussian_obstacle}

In this section, we perform a fully two-dimensional simulation and compare our
results with numerical data from~\cite[Section~4.1.7]{Busto2021}. This test
exercises both spatial dimensions and validates the implementation for
wave-bathymetry interactions.

The computational domain is $[-5, 35] \times [-10, 10]$ with periodic boundary
conditions on all boundaries and uniform grid spacing $\Delta x = \Delta y =
  0.025$. A solitary wave front with amplitude $A = 0.0365$ starts at $x = -3$,
propagating over a constant water depth of $h_{\infty} = 0.2$ toward a Gaussian
obstacle. The bathymetry is given by
\begin{equation}
  b(x,y) = 0.1 \exp\left(-\frac{x^2 + y^2}{2}\right).
\end{equation}

Figure~\ref{fig:gaussian_setup} illustrates the time evolution of the water
surface elevation as the solitary wave approaches and interacts with the
bathymetric feature. The wave undergoes complex three-dimensional deformation
as it propagates over the Gaussian bump, generating a dispersive wave train
behind the leading wave front.

\begin{figure}[htb]
  \centering
  \includegraphics[width=1.0\textwidth]{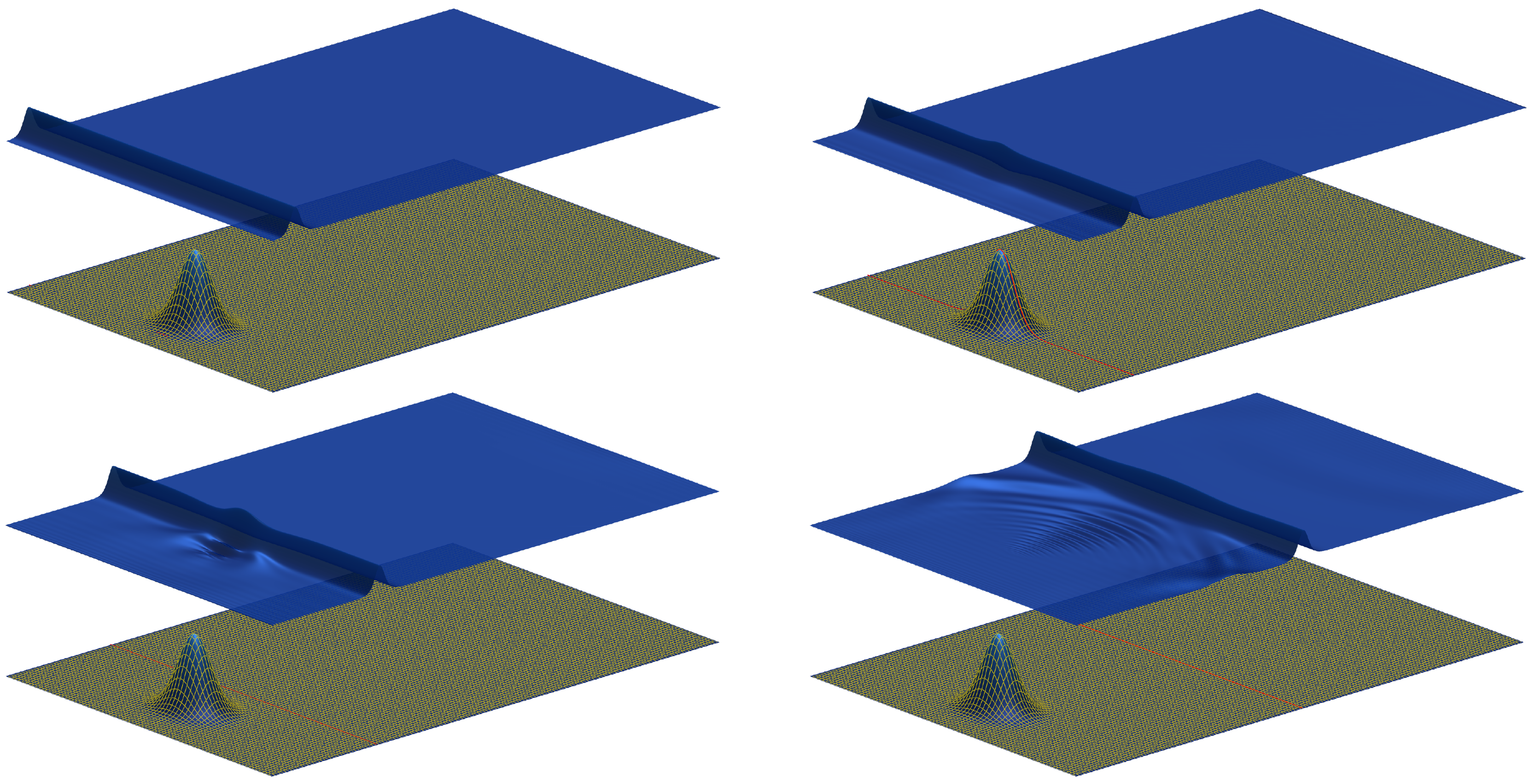}
  \caption{Time evolution of a solitary wave propagating over a Gaussian obstacle
    at $t = 0, 2, 5, 12$ s. The bottom surface shows the bathymetry $b$
    with overlaid computational grid (golden), while the upper surface
    displays the water surface elevation $h + b$ (blue). The red contour
    line on the water surface highlights the wave front location. The
    computational grid is coarsened by a factor of 10 for visualization
    clarity. The wave deforms three-dimensionally as it passes over the
    obstacle, generating a dispersive tail visible in the final snapshot. Created using Makie.jl~\cite{DanischKrumbiegel2021}.}
  \label{fig:gaussian_setup}
\end{figure}

Following~\cite{Busto2021}, we examine the cross-section of the water height
along $y = 0$ at time $t = \SI{12}{s}$. Figure~\ref{fig:busto_gaussian} presents this
comparison. The apparent dip in water height near $x = 0$ is simply due to the
presence of the Gaussian bump (recall that $h$ denotes the water height above the bathymetry $b$).

The numerical solutions show excellent agreement, particularly in the position
of the leading solitary wave and the amplitude and structure of the dispersive
tail trailing behind it. The minor deviation visible near $x \approx 28$ arises
from the periodic boundary conditions; this artifact disappears when using a
larger domain or employing reflecting boundary conditions. This agreement
validates the hyperbolic approximation of the Serre--Green--Naghdi equations and
our numerical implementation for fully two-dimensional wave propagation over
variable bathymetry.

\begin{figure}[htb]
  \centering
  \includegraphics[width=0.6\textwidth]{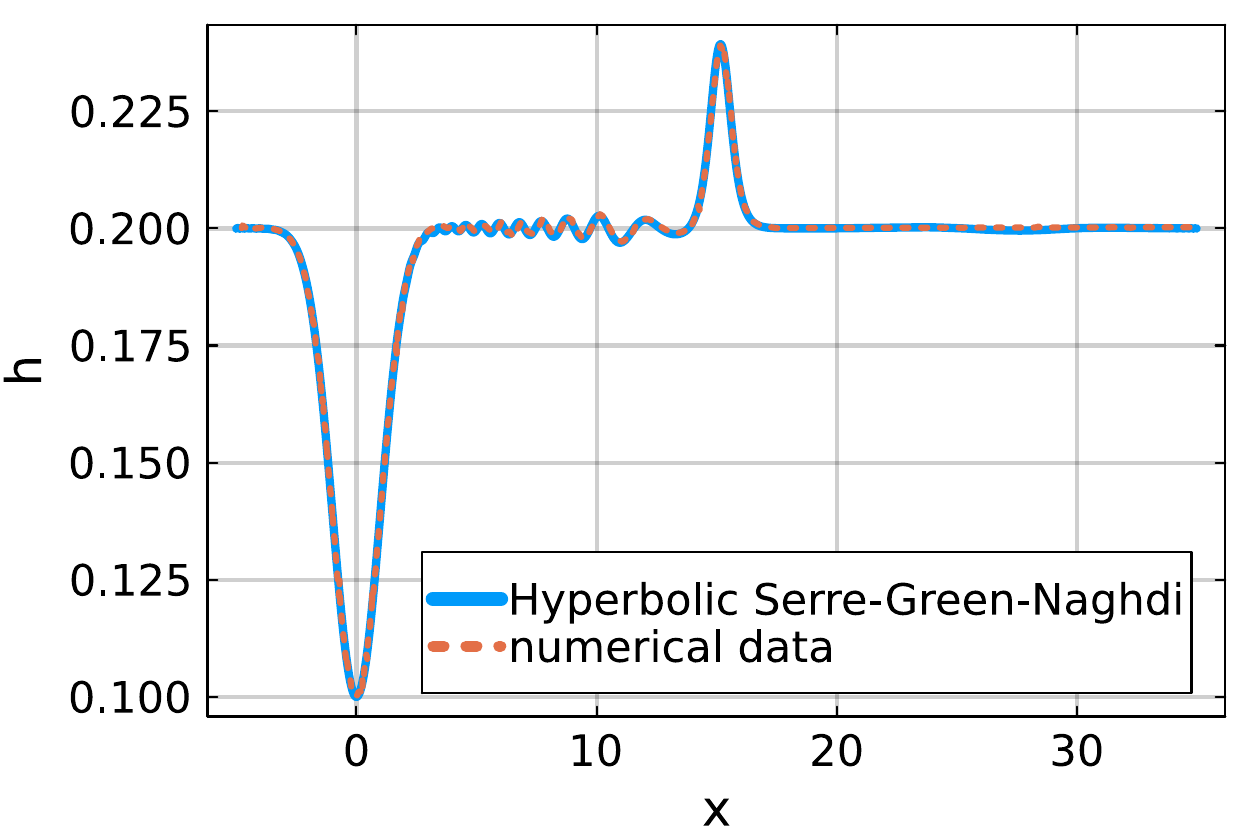}
  \caption{Solitary wave propagating over a Gaussian obstacle. Cross-section of
    water height $h$ along $y = 0$ at time $t = 12$ s. Our numerical
    solution (solid blue line) is compared with data from
    \cite{Busto2021} (orange dashed line). The computational domain
    is $[-5, 35] \times [-10, 10]$ with $\Delta x = \Delta y = 0.025$.
    The dip near $x = 0$ corresponds to the location of the Gaussian
    bathymetry feature. Excellent agreement is observed for both the
    leading wave and the dispersive tail.}
  \label{fig:busto_gaussian}
\end{figure}

\subsection{Propagation of Periodic Waves over a Semi-Circular Shoal}
\label{sec:semi_shoal}

Next, we reproduce the 1971 laboratory experiments of Whalin~\cite{Whalin1971}.
The experiments study the refraction and diffraction
of periodic waves propagating over a semi-circular shoal in a wave tank of
\SI{25.6}{m} length and \SI{6.096}{m} width $W$. The still water depth decreases
from $h_0 = \SI{0.4572}{m}$ at the wave maker to \SI{0.1524}{m} at the end of
the tank. This is a standard benchmark for two-dimensional dispersive wave
models~\cite{RICCHIUTO2014306, guermond2022well, MADSEN1992183,BEJI1996691,WALKLEY2002865,SORENSEN2004181,ESKILSSON2006947,TONELLI2009609,KAZOLEA201242}.

The bottom topography is defined by
\begin{equation}
  b(x,y) = \begin{cases}
    0                                         & \text{if } x < 10.67 - G(y), \\[0.3em]
    \dfrac{x - (10.67 - G(y))}{25}           & \text{if } 10.67 - G(y) \leq x < 18.29 - G(y), \\[0.3em]
    0.30480                                   & \text{if } x \geq 18.29 - G(y),
  \end{cases}
  \label{eq:semi_shoal_bathy}
\end{equation}
where $G(y) = \sqrt{y\,(6.096 - y)}$; the setup is depicted in Figure~\ref{fig:semi_shoal_T2_tend_60s}.

Three test cases with different wave periods and amplitudes are considered:
\begin{enumerate}
  \item[(a)] $T = \SI{1}{s}$, $A = \SI{0.0195}{m}$,
  \item[(b)] $T = \SI{2}{s}$, $A = \SI{0.0075}{m}$,
  \item[(c)] $T = \SI{3}{s}$, $A = \SI{0.0068}{m}$.
\end{enumerate}

The computational domain is $[-10, 36] \times [0, 6.096]$, which is extended beyond the
physical tank to accommodate wave generation and absorption zones. The grid
spacing is uniform with $\Delta x \approx \Delta y \approx 0.023$. The initial
condition is the still-water rest state with zero velocity everywhere.
Reflecting boundary conditions are imposed so that in the $y$-direction they mimic the
lateral walls of the wave tank.

Periodic waves are generated at $x = \SI{-2}{m}$ using source terms following
the approach described in~\cite{RICCHIUTO2014306}. Specifically, a term
$-\partial_t h_{\text{WG}}$ is added to the right-hand side of the equation for $h$, where
\begin{equation}
  h_{\text{WG}}(x,t) = f_{\text{WG}}(x)\, A \sin\!\left(\frac{2\pi}{T}\, t\right).
\end{equation}
Here, $f_{\text{WG}}$ is a Gaussian centered at the generation position that
depends, among other things, on the wave period $T$ and on tuning parameters
that control the spatial extent and amplitude of the source; see Section~5.3
in~\cite{RICCHIUTO2014306} for more details. Using a flat-bathymetry test case, we adjusted the tuning parameters for each period until we achieved the desired wave amplitudes.

Since the source
generates waves traveling in both directions, relaxation zones of \SI{5}{m}
length are placed at both ends of the domain in the $x$-direction to absorb
outgoing waves, so that the choice of boundary conditions in the $x$-direction
is immaterial. Within these relaxation zones, source terms of the form
$-\sigma(\boldsymbol{q} - \boldsymbol{q}_0)$ are added to the
right-hand side of each equation, where $\boldsymbol{q}_0$ denotes
the initial rest state. The damping coefficient $\sigma$ increases
quadratically from zero at the inner edge of the relaxation zone to a maximum
value of 5 at the domain boundary, ensuring a smooth transition.

\begin{figure}[htb]
  \centering
  \includegraphics[width=0.8\textwidth]{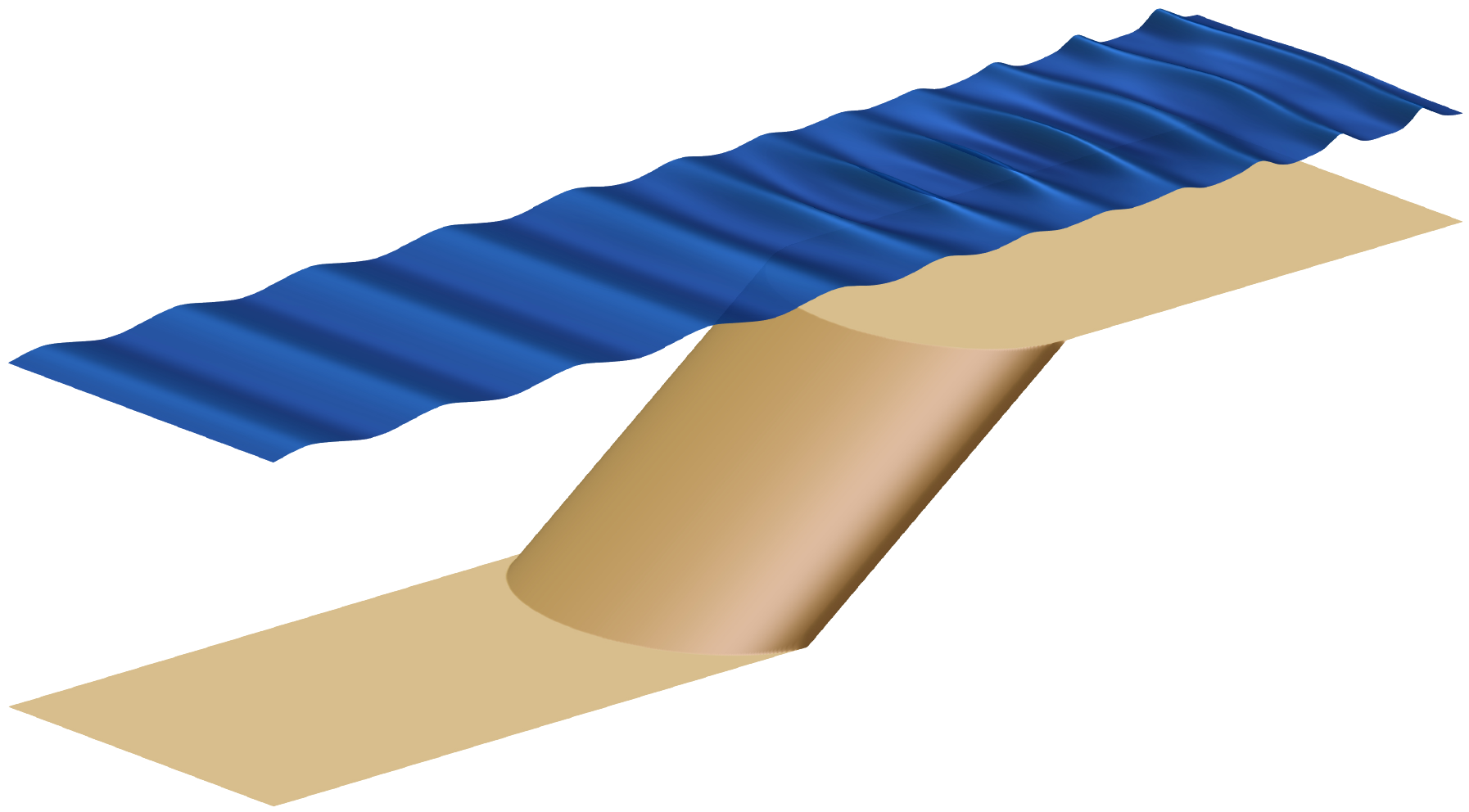}
  \caption{Three-dimensional view of the water surface elevation (blue) and bathymetry (brown) for the semi-circular shoal test case~(b) ($T = \SI{2}{s}$, $A = \SI{0.0075}{m}$) at $t = \SI{60}{s}$, showing the refraction of the initially planar wave fronts as they propagate over the shoal. Only the region $x \in [-5, 30]$ is shown. Created using Makie.jl~\cite{DanischKrumbiegel2021}.}
  \label{fig:semi_shoal_T2_tend_60s}
\end{figure}

For all cases, the simulation is first run for many wave periods to allow
the solution to reach a steady periodic state. After this transient phase, the
free-surface elevation along the tank centerline $y = W/2$ is sampled over
15 periods. A discrete Fourier transform of the time series at each
spatial point along the centerline is then performed to extract the amplitudes
of the first, second, and third harmonics. This harmonic analysis is performed to
allow comparison with the experimental data of Whalin~\cite{Whalin1971}; the results are shown in Figures~\ref{fig:semi_shoal_case_1}--\ref{fig:semi_shoal_case_3}.

For case~(a), the experimental data~\cite{Whalin1971} are only available for the first two harmonics. The numerical solution captures the general shape of the first harmonic well, though its amplitude lags slightly behind the experimental values over the shoal.

\begin{figure}[htb]
  \centering
  \includegraphics[width=0.6\textwidth]{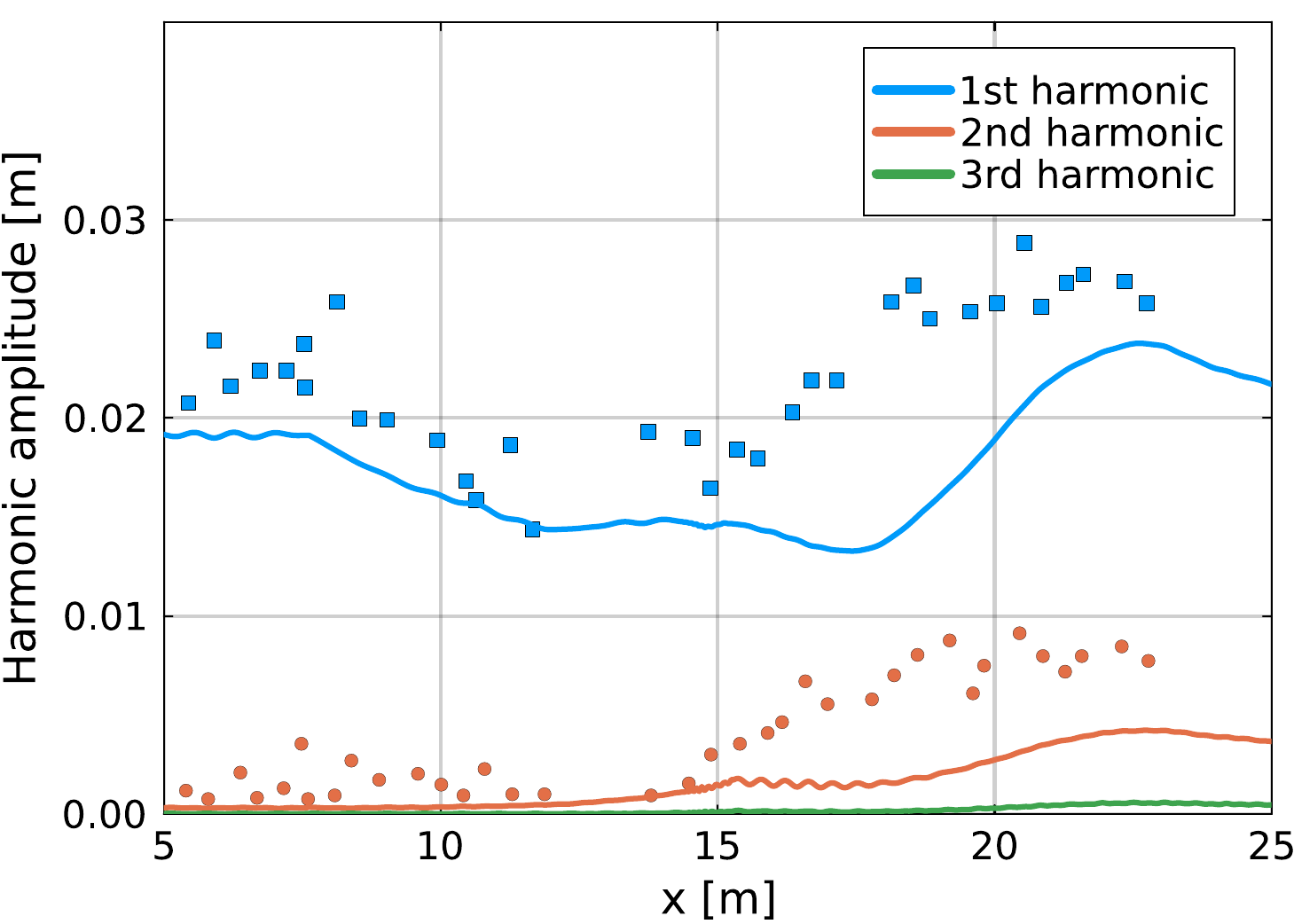}
  \caption{Propagation of periodic waves over a semi-circular shoal, case~(a):
    $T = \SI{1}{s}$, $A = \SI{0.0195}{m}$. Comparison of the first two
    harmonic amplitudes along the tank centerline between the numerical
    solution (solid lines) and experimental data
    from~\cite{Whalin1971} (symbols).}
  \label{fig:semi_shoal_case_1}
\end{figure}

In the intermediate period case~(b), the numerical results are in very good agreement with the experimental data for all three harmonics, in both amplitude and shape.

\begin{figure}[htb]
  \centering
  \includegraphics[width=0.6\textwidth]{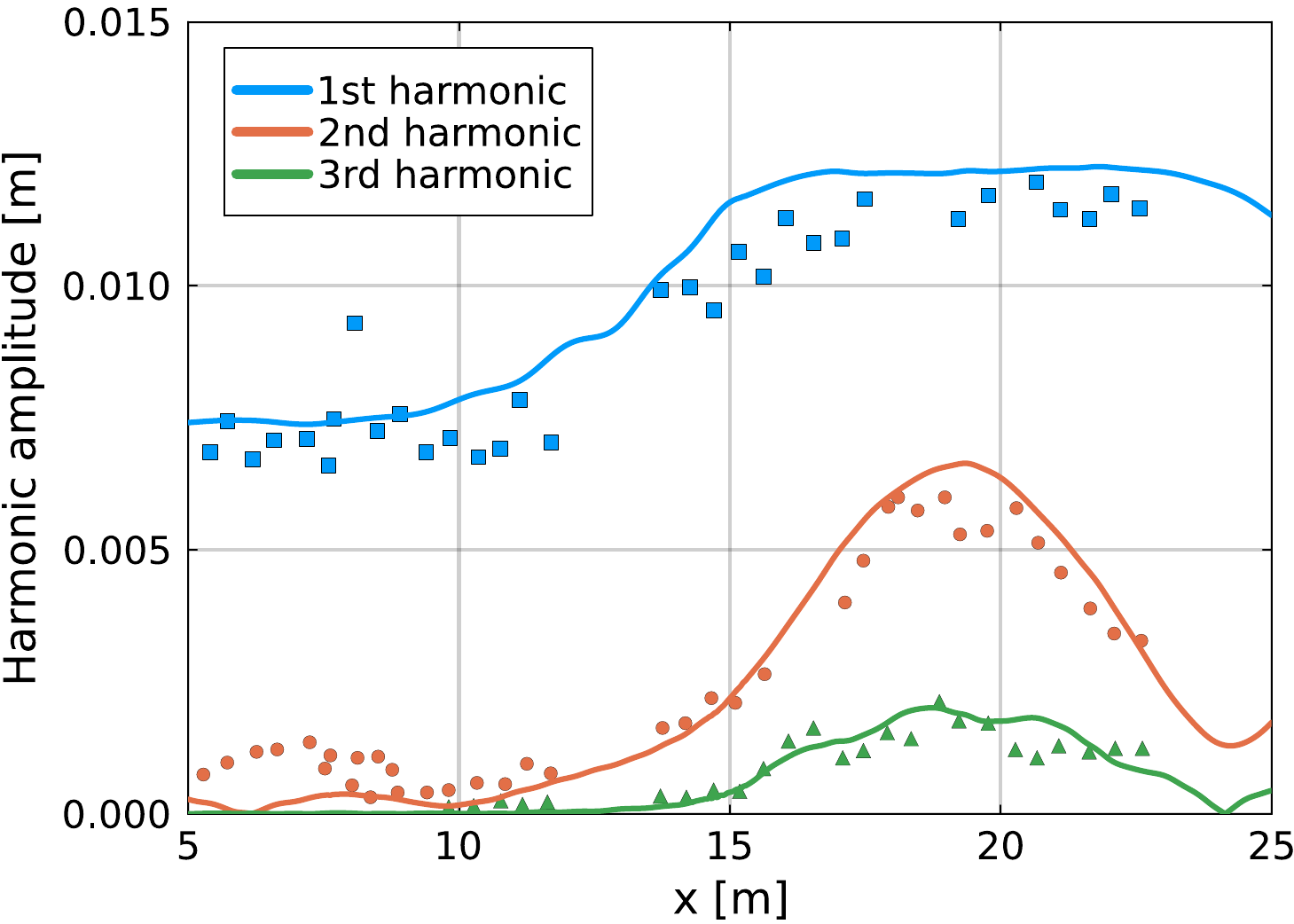}
  \caption{Propagation of periodic waves over a semi-circular shoal, case~(b):
    $T = \SI{2}{s}$, $A = \SI{0.0075}{m}$. Comparison of the first three
    harmonic amplitudes along the tank centerline between the numerical
    solution (solid lines) and experimental data
    from~\cite{Whalin1971} (symbols).}
  \label{fig:semi_shoal_case_2}
\end{figure}

The numerical solution for the longest period case~(c) captures the overall shape of all three harmonics. The first harmonic amplitude is slightly overestimated over the shoal, while the second and third harmonics are somewhat underestimated. This behavior is, however, consistent with what has been reported by other authors for this test case using different dispersive wave models~\cite{RICCHIUTO2014306,MADSEN1992183,BEJI1996691,TONELLI2009609,KAZOLEA201242}.

\begin{figure}[htb]
  \centering
  \includegraphics[width=0.6\textwidth]{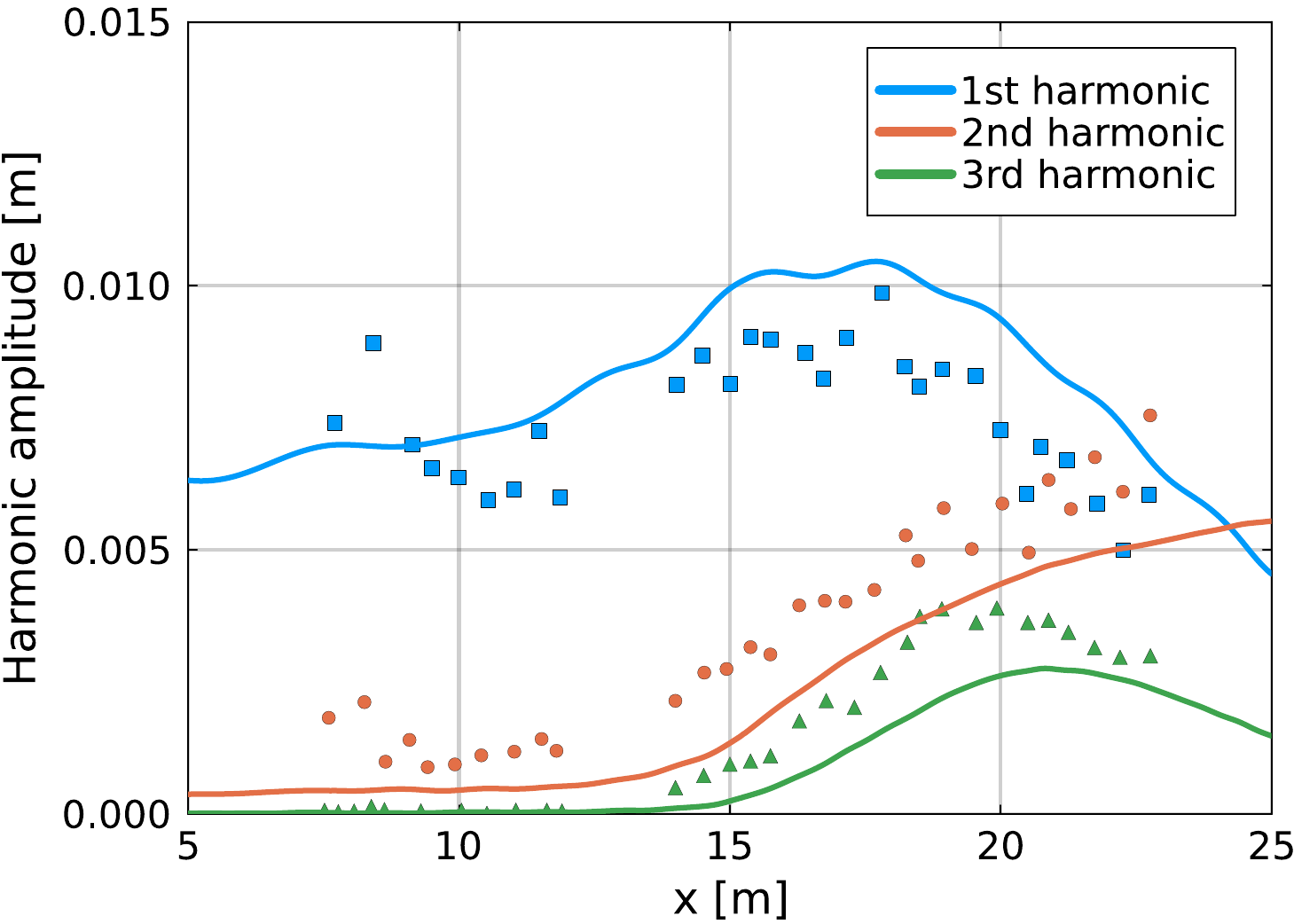}
  \caption{Propagation of periodic waves over a semi-circular shoal, case~(c):
    $T = \SI{3}{s}$, $A = \SI{0.0068}{m}$. Comparison of the first three
    harmonic amplitudes along the tank centerline between the numerical
    solution (solid lines) and experimental data
    from~\cite{Whalin1971} (symbols).}
  \label{fig:semi_shoal_case_3}
\end{figure}

Overall, these results are in general agreement with both previous literature and the experimental data, showing
the ability of the proposed method to reproduce the nonlinear refraction and diffraction effects observed in the experiments.
 
\subsection{Dam Break Problems}
\label{sec:dam_break}

In this section, we consider two-dimensional dam break problems on a flat bottom.
We compare our results against the numerical data of Tkachenko et al.~\cite{tkachenko2023extended}, who solve the same
system in a finite volume framework.

Two configurations are studied: a \emph{cylindrical} dam break and a \emph{square}
dam break. To avoid high-frequency oscillations in our solutions that
would arise from a discontinuous initial condition and to enable a meaningful
comparison with~\cite{tkachenko2023extended}, we replace the discontinuous initial condition of the water height by a smoothed version.

For the cylindrical dam break we use
\begin{equation}
  \begin{aligned}
    h(x,y,0) &= 1 + 0.4 \left(1 - \tanh\left(\frac{r - 20}{\alpha}\right)\right),
    \quad r = \sqrt{x^2 + y^2}, \\
    u(x,y,0) &= v(x,y,0) = 0,
  \end{aligned}
\end{equation}
and for the square dam break
\begin{equation}
  \begin{aligned}
    h(x, y, 0) &= 1 + 0.8\, f(x)\, f(y), \quad f(x) = \tfrac{1}{2}\left(1 - \tanh\left(\frac{|x| - 40}{\alpha}\right)\right) \\
    u(x, y, 0) &= v(x, y, 0) = 0,
  \end{aligned}
\end{equation}
with smoothing parameter $\alpha = 6.5$. In both cases, the initial water height is approximately $1.8$ in the central region and $1.0$ in the outer region, with a smooth transition between these values.
The computational domain is $[-300, 300] \times [-300, 300]$ with grid spacing
$\Delta x = \Delta y = 0.22$. Reflecting boundary conditions are used, though the choice does not matter as the wave does not reach the outer boundaries of the domain during the
simulation, which is performed until $t = 40$.
The smoothing parameter $\alpha$ governs
a trade-off between two regions. A larger $\alpha$ improves the amplitude
agreement for $x \in [130, 160]$ at the cost of accuracy for
$x \in [60, 100]$, while a smaller $\alpha$ has the
opposite effect. The value $\alpha = 6.5$ was chosen as a compromise that
yields reasonable amplitude agreement in both regimes.

The smoothed initial conditions and the water
surface elevations at $t = 40$ are shown in a heatmap in
Figures~\ref{fig:dam_break_cylinder_heat}
and~\ref{fig:dam_break_square_heat} for the cylindrical and square cases,
respectively. For comparison with the reference data, we extract the
cross-section along $y = 0$.

\begin{figure}[htb]
  \centering
  \includegraphics[width=0.8\textwidth]{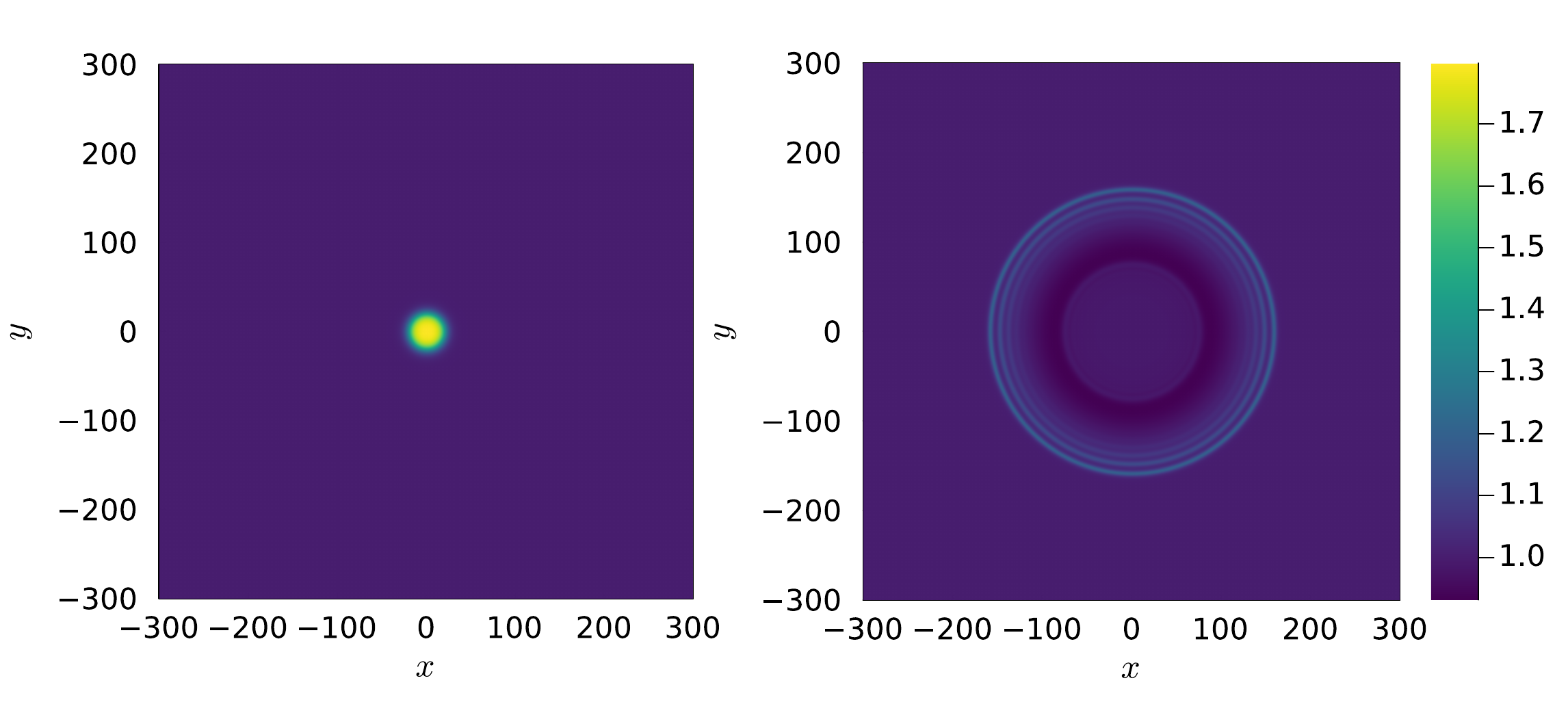}
  \caption{Cylindrical dam break on the domain $[-300, 300] \times [-300, 300]$ with grid
    spacing $\Delta x = \Delta y = 0.22$. Left: initial condition.
    Right: $h$ at $t = 40$, showing the outward-propagating wave.}
  \label{fig:dam_break_cylinder_heat}
\end{figure}

\begin{figure}[htb]
  \centering
  \includegraphics[width=0.6\textwidth]{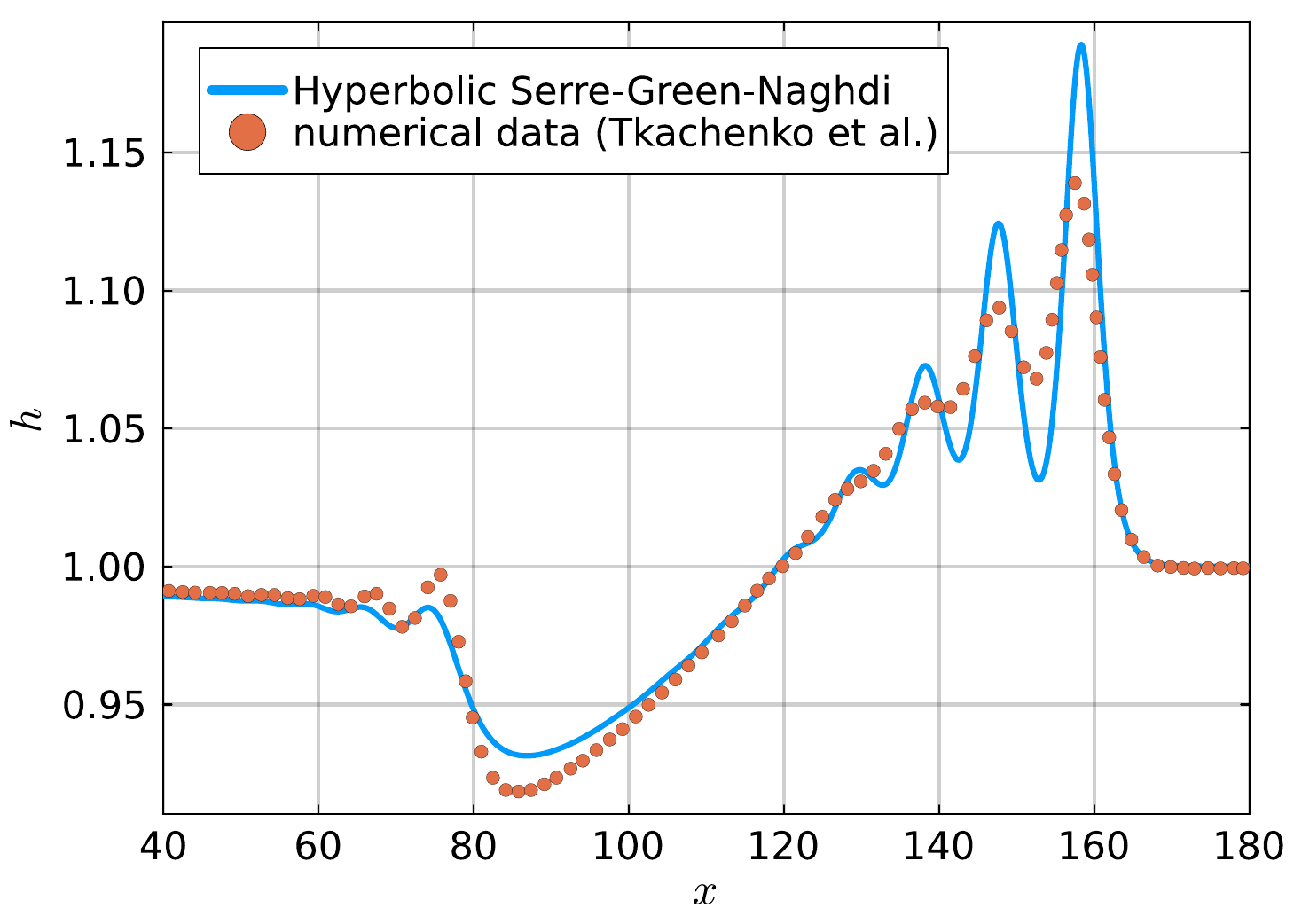}
  \caption{Cross-section of the water surface elevation $h$ along $y = 0$
    at $t = 40$ for the cylindrical dam break. Our numerical solution (solid
    line) is compared with the reference data
    from~\cite{tkachenko2023extended} (dots).}
  \label{fig:dam_break_cylindric}
\end{figure}

Figure~\ref{fig:dam_break_cylindric} shows this cross-section at $t = 40$
for the cylindrical dam break. Our scheme captures the global structure of
the dispersive wave train, both the phase of the leading wave and the overall
elevation of the trailing oscillations, in good agreement
with~\cite{tkachenko2023extended}.

The same observations hold for the square dam break shown in
Figure~\ref{fig:dam_break_square}. The global wave structure and the phase
of both the leading and trailing waves are well reproduced, while there are
amplitude deviations from the reference data of~\cite{tkachenko2023extended}, due to the smoothing of the initial condition.

\begin{figure}[htb]
  \centering
  \includegraphics[width=0.8\textwidth]{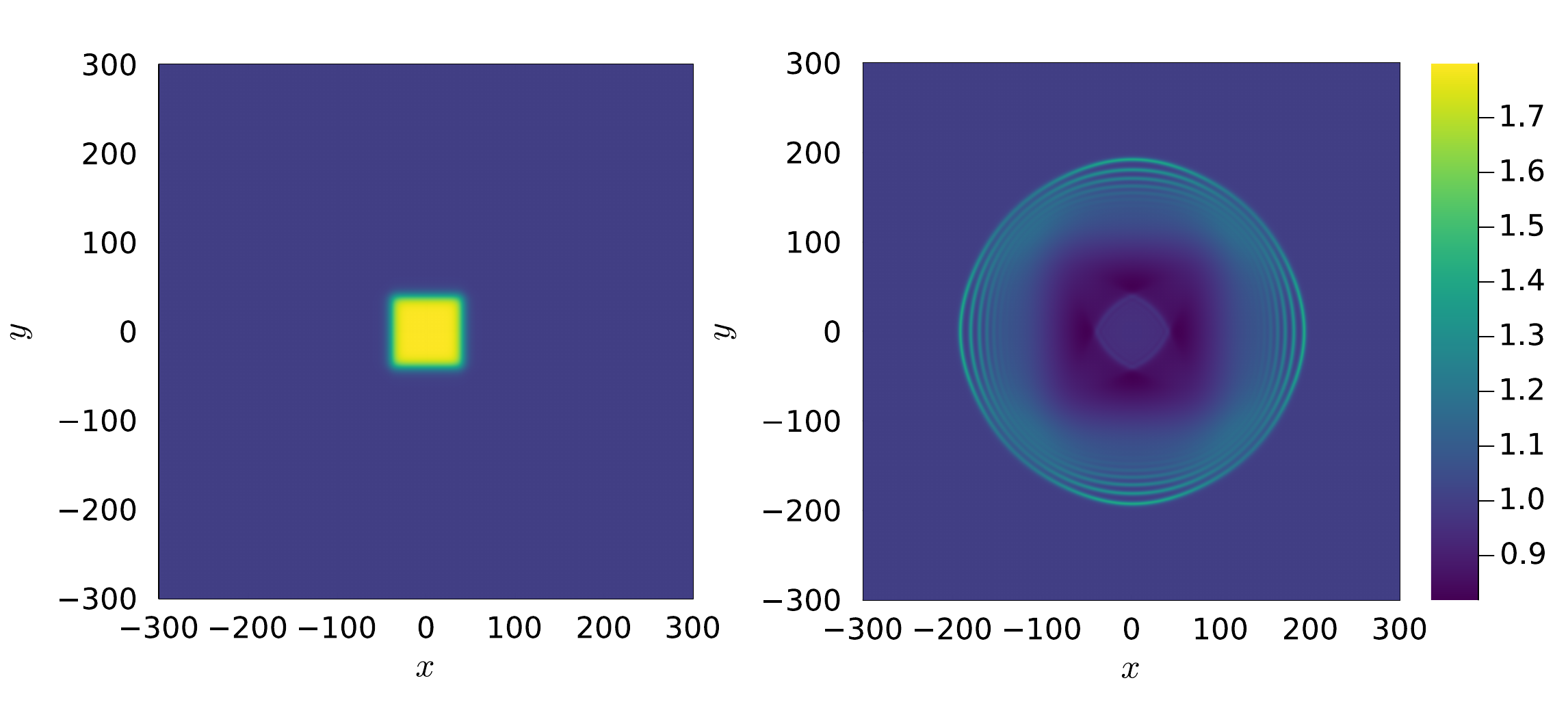}
  \caption{Square dam break on the domain $[-300, 300] \times [-300, 300]$ with grid
    spacing $\Delta x = \Delta y = 0.22$. Left: initial condition.
    Right: $h$ at $t = 40$, showing the outward-propagating wave.}
  \label{fig:dam_break_square_heat}
\end{figure}

\begin{figure}[htb]
  \centering
  \includegraphics[width=0.6\textwidth]{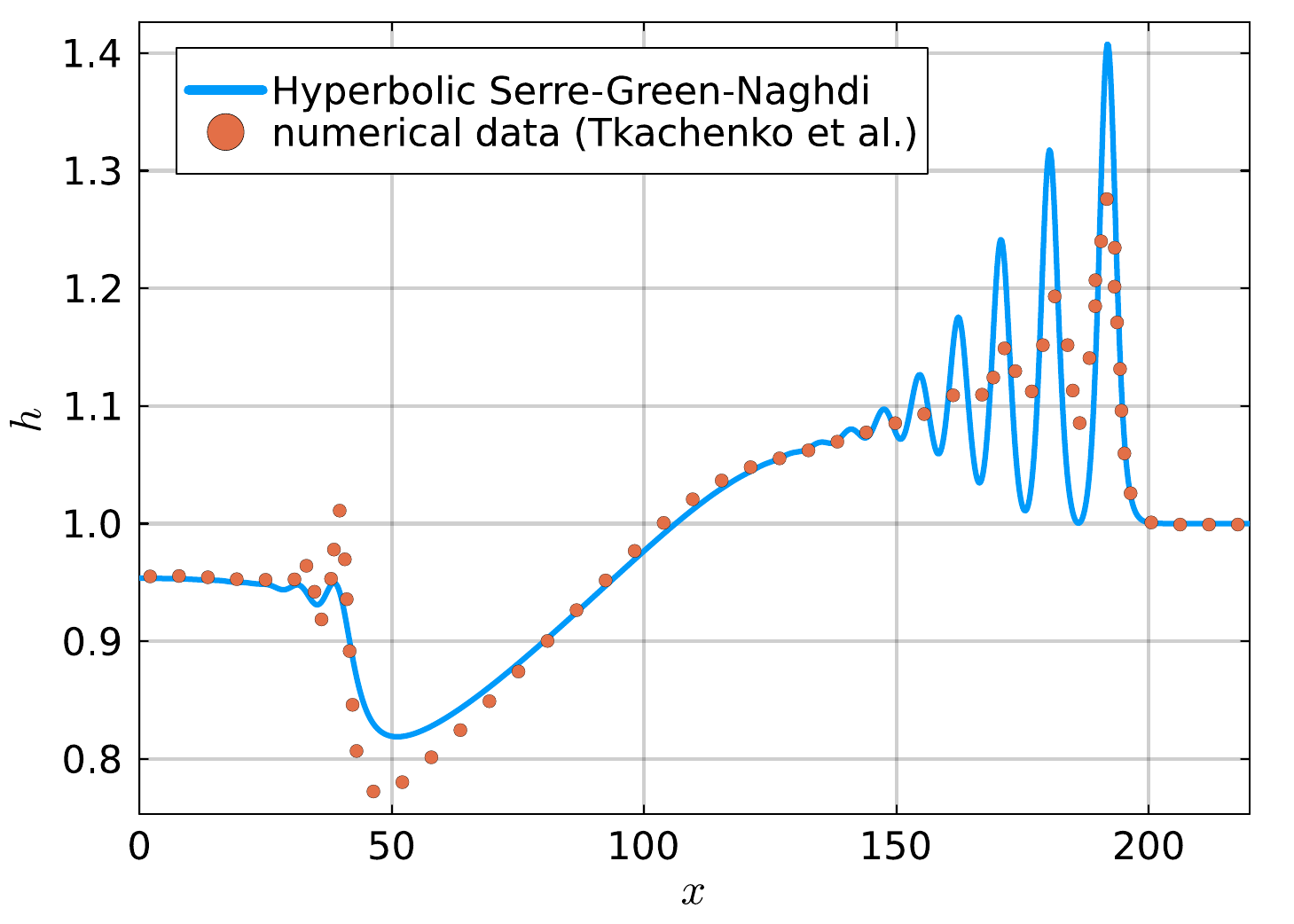}
  \caption{Cross-section of the water surface elevation $h$ along $y = 0$
    at $t = 40$ for the square dam break. Our numerical solution (solid line)
    is compared with the reference data
    from~\cite{tkachenko2023extended} (dots).}
  \label{fig:dam_break_square}
\end{figure}

\subsection{Long-Term Cylindrical Dam Break}
\label{sec:long_term_dam_break}

In this section, we consider the same cylindrical dam break setup as in
Section~\ref{sec:dam_break}, but extend the simulation to $t = 300$ to
investigate the long-term behavior of the dispersive shock wave. To prevent
the outward-propagating waves from reaching the domain boundaries, we enlarge
the computational domain to $[-1100, 1100] \times [-1100, 1100]$. Again, reflecting
boundary conditions are used, though the choice is still immaterial. Using the same grid
spacing of $\Delta x = \Delta y = 0.22$ yields a grid of
$\num{10000} \times \num{10000} = \num{100000000}$ points. A simulation of this scale and
duration takes advantage of both the computational throughput of our GPU-accelerated
implementation and the energy conservation properties of our
semi-discretization: without the former, the computation would be infeasible
on a single workstation, and without the latter, numerical dissipation could
progressively destroy the fine dispersive structures over long
integration times. This extends the one-dimensional soliton fission
experiments of~\cite[Section~11.7]{ranocha2025structure} to the fully two-dimensional
setting with cylindrical geometry.

The radial profile of the water height along $y = 0$ is shown at nine
output times between $t = 30$ and $t = 300$ in
Figure~\ref{fig:dam_break_long_over_time}. The initial undular bore
progressively resolves into a train of solitary waves ordered by decreasing
amplitude, with the tallest and fastest pulse at the leading edge. This
process is known as soliton fission~\cite{ranocha2025structure}. The peak
amplitudes decrease over time due to cylindrical spreading as the wave
energy distributes over an ever-growing circumference.

\begin{figure}[htb]
  \centering
  \includegraphics[width=0.6\textwidth]{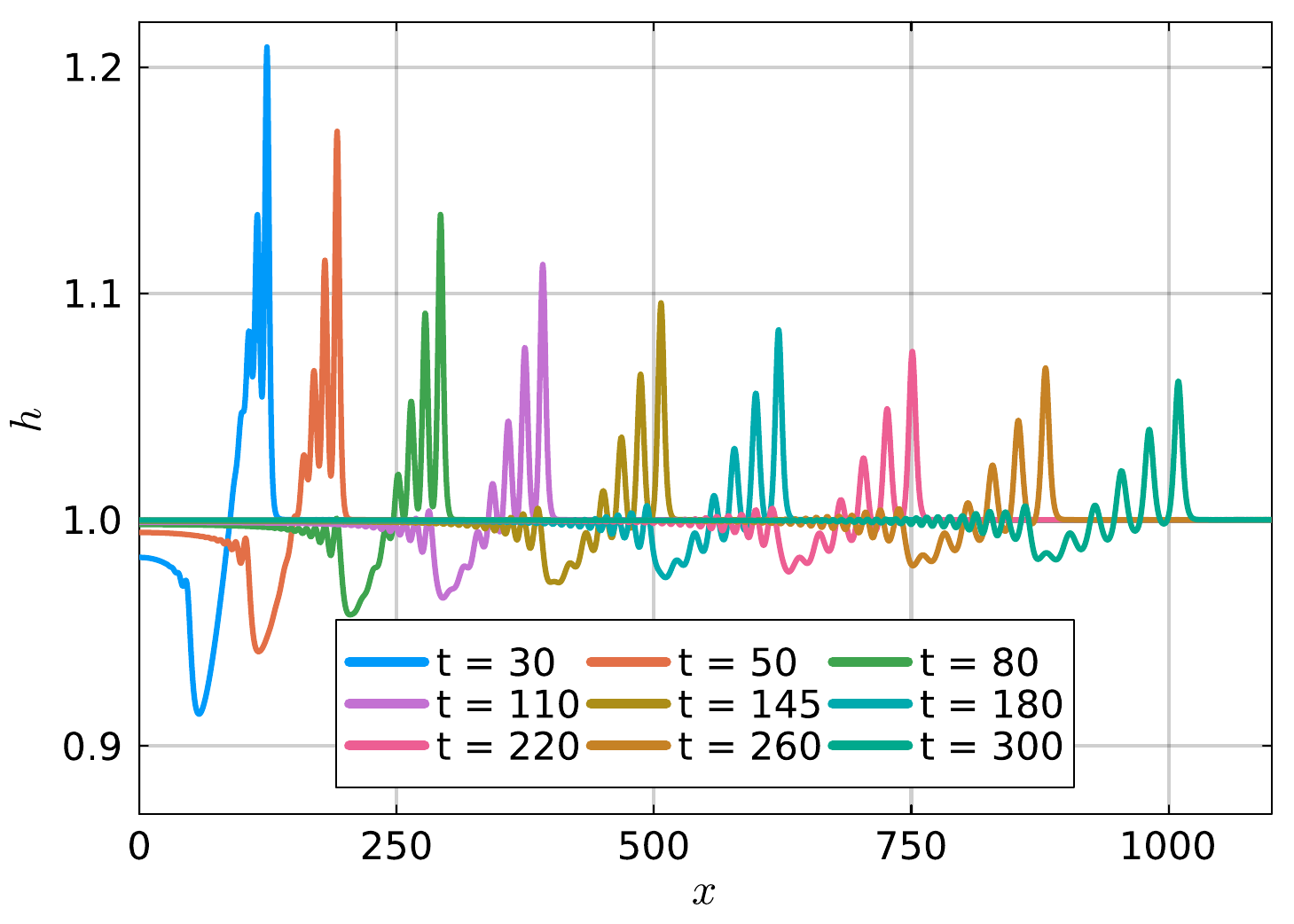}
  \caption{Cross-section of the water surface elevation $h$ along $y = 0$
    at times $t = 30, 50, 80, 110, 145, 180, 220, 260, 300$. The
    dispersive shock wave progressively resolves into a train of solitary
    waves whose amplitudes decay due to cylindrical spreading. The
    computational domain is $[-1100, 1100] \times [-1100, 1100]$ with grid
    spacing $\Delta x = \Delta y = 0.22$.}
  \label{fig:dam_break_long_over_time}
\end{figure}

To verify that the Cartesian grid does not introduce significant asymmetry
over such long integration times,
Figure~\ref{fig:dam_break_long_cross_sections} compares cross-sections at
$t = 300$ taken along $y = 0$ (the \SI{0}{\degree} direction) and along the
diagonal (the \SI{45}{\degree} direction), both plotted as a function of the
radial coordinate $r = \sqrt{x^2 + y^2}$. The two profiles are nearly
indistinguishable, confirming that the solution preserves its radial symmetry
well despite evolving on a square grid.
Note that this agreement deteriorates on coarser grids, where the reduced
resolution amplifies grid-induced directional errors.

\begin{figure}[htb]
  \centering
  \includegraphics[width=0.6\textwidth]{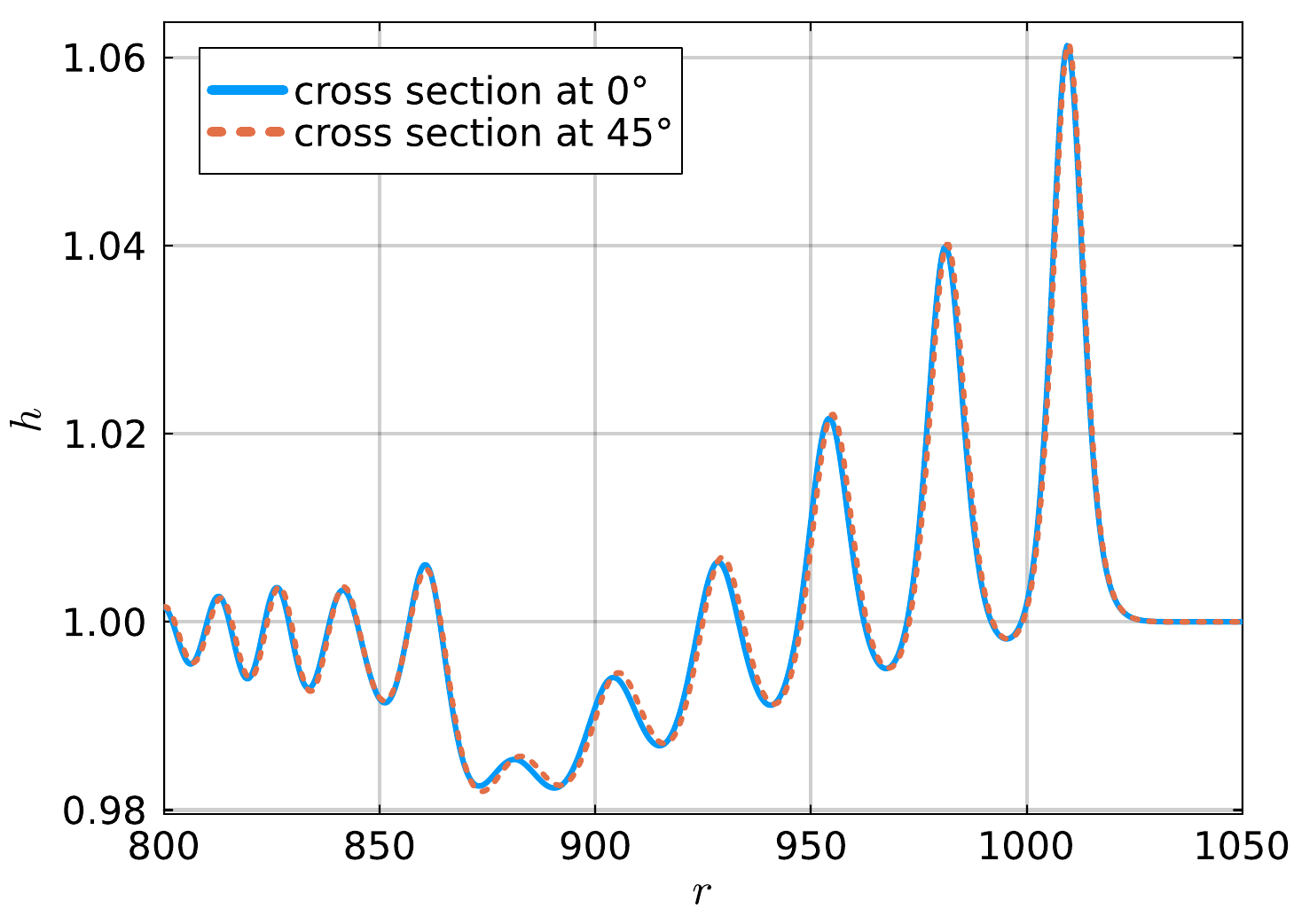}
  \caption{Comparison of cross-sections at $t = 300$ along $y = 0$
    (\SI{0}{\degree}) and along the diagonal (\SI{45}{\degree}), plotted as
    a function of the radial coordinate $r$. The two
    profiles are nearly indistinguishable, confirming that the radial
    symmetry is well preserved by the Cartesian grid.}
  \label{fig:dam_break_long_cross_sections}
\end{figure}

The amplitude of the leading peak decays as the wave propagates outward,
see Figure~\ref{fig:dam_break_long_amplitude_decay}. The overall decay is
consistent with cylindrical spreading, where the wave energy distributes
over a circumference proportional to $r$, suggesting a $1/\sqrt{r}$ scaling.
However, the data points systematically lie below the fitted $1/\sqrt{r}$
curve (anchored at the start time), indicating that the leading peak decays
slightly faster than pure geometric spreading would predict.
This may simply be a consequence of ongoing interactions between different harmonic
components: the leading pulse is still transferring energy to the secondary solitary waves and the
dispersive radiation tail behind it, before the soliton fission has fully taken place.
The existence of different scaling laws before the soliton fission, with faster decay before the fission has
finally taken place, has been investigated for simpler models, such as the
cylindrical KdV equations; see, e.g.,~\cite{Weidman_Zakhem_1988}.

\begin{figure}[htb]
  \centering
  \includegraphics[width=0.6\textwidth]{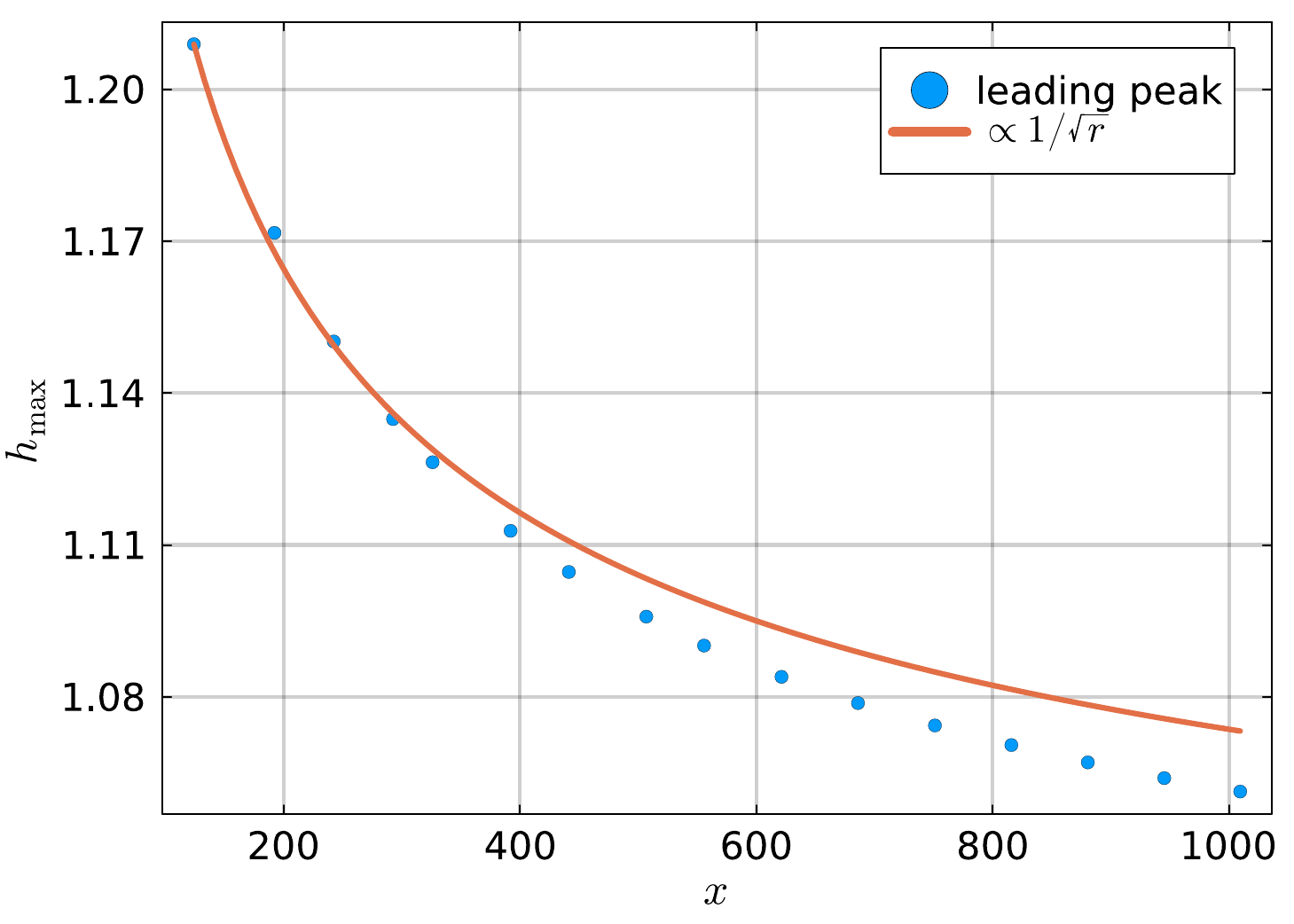}
  \caption{Amplitude of the leading peak $h_{\max}$ as a function of its
  radial position $r$, with data points from $t = 30$ to $t = 300$. The
  solid line shows the $1/\sqrt{r}$ decay law anchored at the starting time.
  The data points lie below this curve, indicating that the leading pulse
  decays faster than geometric spreading alone due to ongoing energy
  redistribution into the trailing wave train.}
  \label{fig:dam_break_long_amplitude_decay}
\end{figure}

Finally, we fit the analytical solitary wave profile from
equation~\eqref{eq:solitary_wave_analytical} to the leading pulse at four
times using LsqFit.jl~\cite{Myles_White_LsqFit_jl}, as shown in Figure~\ref{fig:dam_break_long_soliton_fit}. At $t = 30$,
the leading pulse has not yet separated cleanly from the undular bore, and
the fit residual is correspondingly large, about a tenth of the
peak amplitude. By $t = 80$, the separation has improved significantly, though the
one-sided positive residual indicates that the separation from the trailing waves is still incomplete.
The fit already captures the pulse shape well, with residuals
small relative to the peak amplitude. At later times ($t = 220$ and
$t = 300$), the leading pulse is well separated, with only a slight tail to its left. The residuals are approximately 100 times
smaller than the peak amplitude. Note that due to cylindrical spreading, the
peak amplitude decreases continuously over time, which limits how cleanly
the leading soliton can separate from the trailing wave train: running for
longer times improves the separation but simultaneously reduces the signal
to be fitted. These results are consistent with the one-dimensional soliton
fission experiments reported
in~\cite[Section~11.7]{ranocha2025structure}, where similar fit quality was
observed for the leading solitary waves emerging from a dispersive shock.
Energy-preserving methods are best suited to capture
this behavior on relatively coarse meshes. For such long-time simulations,
dissipative methods would require prohibitive computational effort for the same result.

\begin{figure}[htb]
  \centering
  \includegraphics[width=0.9\textwidth]{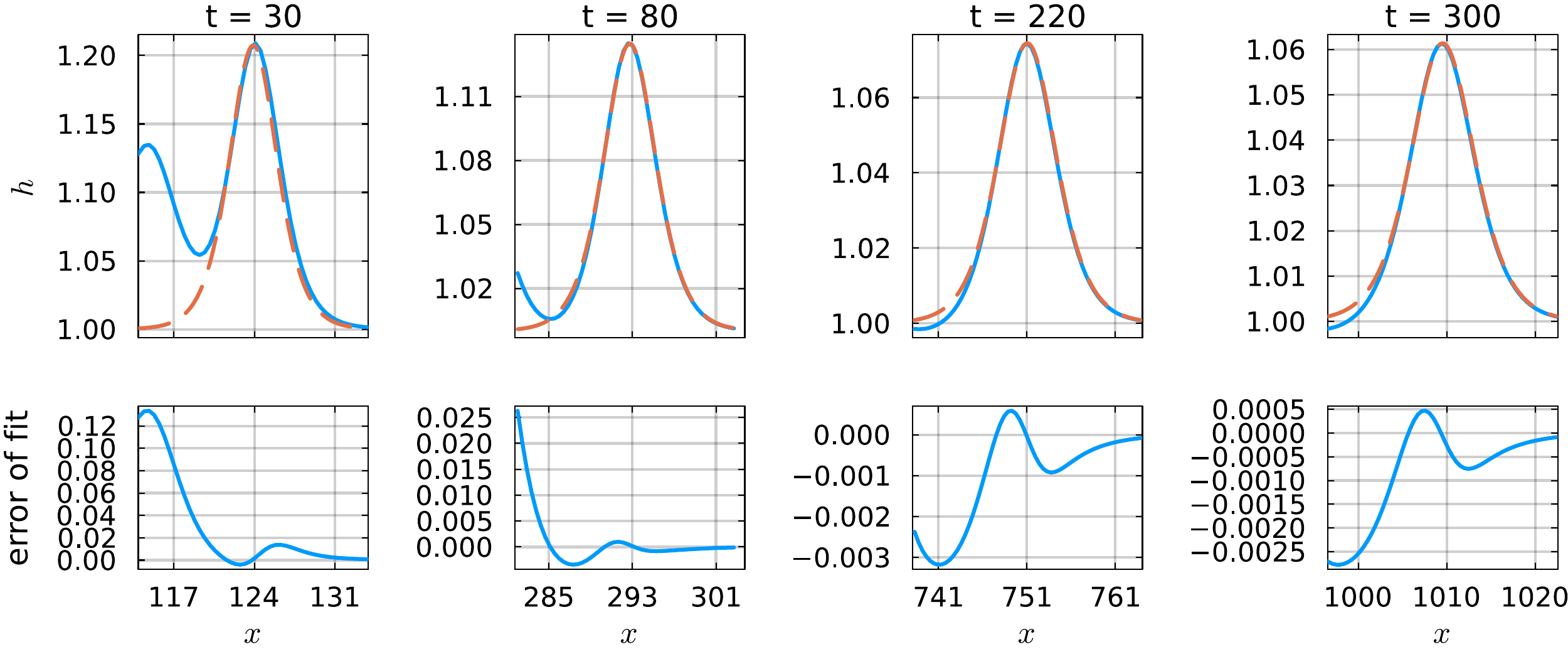}
  \caption{Soliton fits to the leading pulse at times
    $t = 30, 80, 220, 300$. Top: water height $h$ (solid blue line) and
    fitted analytical solitary wave profile (dashed orange line). Bottom:
    fit residual. At early times, the leading pulse has not yet separated
    from the undular bore, resulting in larger fit errors. From $t = 80$
    onward, the residuals are small relative to the peak amplitude,
    indicating convergence toward a soliton profile.}
  \label{fig:dam_break_long_soliton_fit}
\end{figure}

\subsection{Performance Comparison}
\label{sec:performance}

To assess the computational efficiency of our implementation, we present both
a scaling study with varying grid resolution and a summary of execution times
for all numerical experiments presented in this work. The four hardware
platforms tested are:
\begin{itemize}
  \item CPU (x86): Intel Core i7-1185G7 (4 cores)
  \item CPU (ARM): Apple M4 (4 performance cores)
  \item AMD GPU: Instinct MI210
  \item NVIDIA GPU: H200
\end{itemize}
We emphasize that these are single-run benchmarks on each platform, intended
to provide rough estimates of computational efficiency rather than a
comprehensive performance study.

\subsubsection{Scaling Behavior}
\label{sec:scaling}

We benchmark the total runtime of the wave over Gaussian obstacle test case
from Section~\ref{sec:gaussian_obstacle} to final time $t = 12$ with varying
grid resolutions $\mathcal{N}_x = 2^n$ and
$\mathcal{N}_y = \mathcal{N}_x/2$.

Figure~\ref{fig:benchmarking_plot} presents the wall-clock time as a function
of grid resolution. The GPU implementations demonstrate substantial
performance advantages over the CPUs, particularly at higher resolutions when
the overhead becomes negligible. The NVIDIA H200 shows the best performance
across all problem sizes tested, followed by the AMD MI210, with the CPU
implementation being significantly slower for larger grids, as expected.

Extrapolating the Intel CPU time for $\mathcal{N}_x = 8192$ yields an estimated
runtime exceeding one day, whereas the H200 completes the same computation in
less than 15 minutes. These results demonstrate that GPU acceleration is
essential for large problem sizes or long-term simulations.

An interesting observation is that the runtime scales approximately as
$\mathcal{N}_x^{2.7}$ (or equivalently as
$\mathcal{N}_{\text{total}}^{1.35}$ where $\mathcal{N}_{\text{total}}$ is
the total number of grid points). This super-linear scaling is primarily
attributable to the adaptive time stepping: smaller spatial grid spacing
$\Delta x$ imposes more restrictive
Courant--Friedrichs--Lewy (CFL) conditions, requiring additional time steps
to reach the final time.

\begin{figure}[htb]
  \centering
  \includegraphics[width=0.6\textwidth]{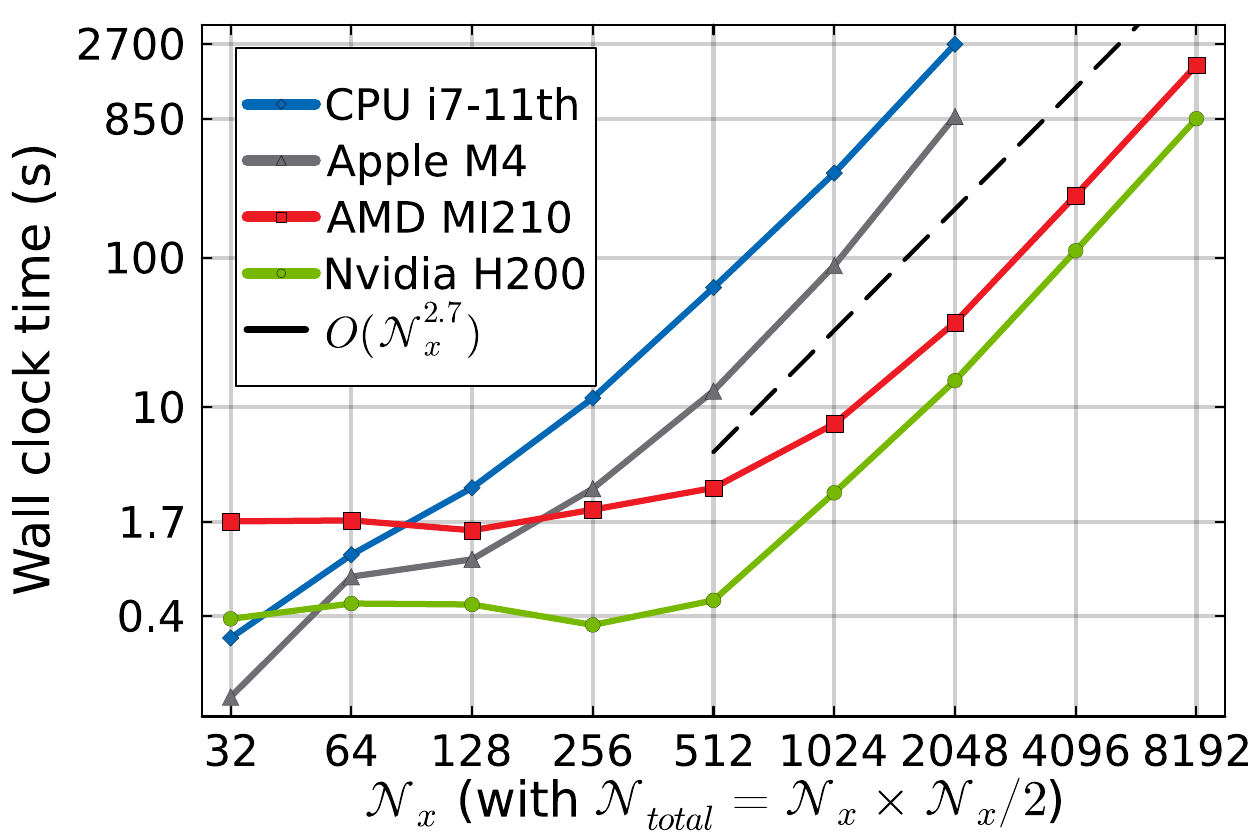}
  \caption{Performance comparison across hardware platforms for the wave over
    Gaussian obstacle test case. Runtime (wall-clock time) versus grid
    resolution $\mathcal{N}_x$ for CPUs (Intel Core i7-1185G7 and Apple M4), AMD GPU
    (Instinct MI210), and NVIDIA GPU (H200). The problem size scales as
    $\mathcal{N}_x^2/2$. Note the log-log scale. These results are from
    single runs on each platform, so no error bars are shown.}
  \label{fig:benchmarking_plot}
\end{figure}

To account for the influence of adaptive time stepping on the observed
scaling, we normalize the runtime by the number of right-hand-side evaluations
performed during the simulation.
Figure~\ref{fig:benchmarking_plot_normalized} shows the average time per
right-hand-side evaluation as a function of the problem size
$\mathcal{N}_{\text{total}} = \mathcal{N}_x \times \mathcal{N}_y$. As
expected, the scaling becomes approximately linear. Note that the time per
right-hand side (RHS) evaluation was calculated by dividing the total
wall-clock time by the number of RHS evaluations reported by the ODE solver,
which ignores any overhead such as initialization or saving of the solution.
This approach is justified since for large problem sizes, this overhead
becomes negligible compared to the total runtime.

\begin{figure}[htb]
  \centering
  \includegraphics[width=0.6\textwidth]{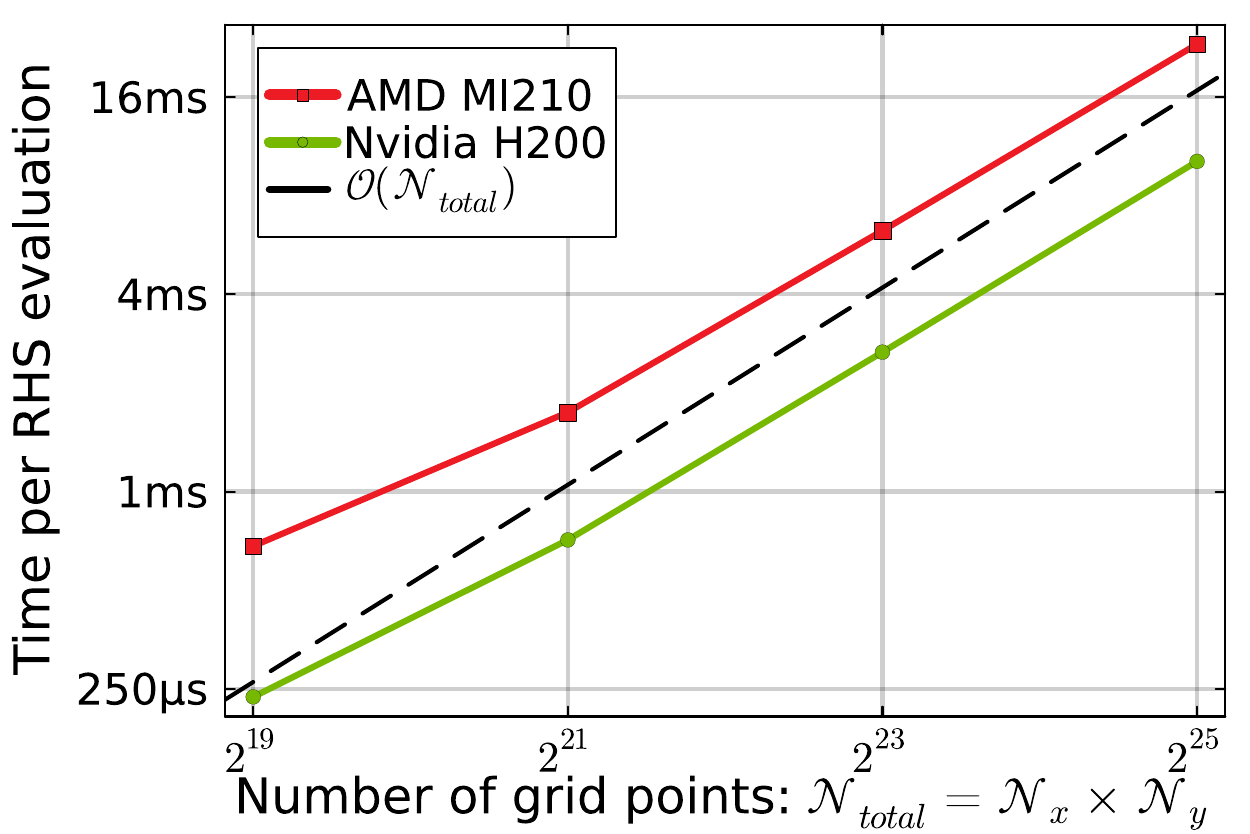}
  \caption{Average time per right-hand-side evaluation versus problem size for
    the wave over Gaussian obstacle test case on GPU platforms. The time per
    RHS evaluation scales approximately linearly with problem size, as
    indicated by the $\mathcal{O}(\mathcal{N}_{\text{total}})$ reference
    line, demonstrating efficient scaling behavior of our implementation on
    both AMD GPU (Instinct MI210) and NVIDIA GPU (H200). These results are
    from single runs on each platform and provide rough order-of-magnitude
    estimates. The problem size varies from $1024 \times 512 = 2^{19}$ to
    $8192 \times 4096 = 2^{25}$.}
  \label{fig:benchmarking_plot_normalized}
\end{figure}

Note that in all benchmarks, as in all other simulations presented in this
work, we used 64-bit floating-point numbers (\texttt{Float64} in Julia). This
choice was made because we encountered unexpected step size issues with
adaptive time stepping when using 32-bit precision in OrdinaryDiffEq.jl,
specifically related to our implementation. From experience, switching to
32-bit precision on an H200 GPU typically yields an additional speedup of
about a factor of~2.

\subsubsection{Execution Times for All Experiments}
\label{sec:execution_times}

Table~\ref{tab:execution_times} summarizes the approximate wall-clock times
for all numerical experiments presented in this work. The reported times
include all postprocessing, analysis, and plotting.
In particular, the energy conservation test
(Section~\ref{sec:energy_conservation}) involves substantial CPU-side analysis,
which accounts for a significant portion of its total runtime.

The timings illustrate the practical benefit of GPU acceleration: the majority
of experiments complete in under a minute on the NVIDIA H200, while several of
the larger two-dimensional simulations are infeasible on a single CPU core
within a reasonable time frame. The AMD MI210 consistently outperforms the CPUs,
though it does not match the H200 for most experiments. A notable exception is
the manufactured solution convergence study
(Section~\ref{sec:manufactured_solution}), where the MI210 is faster than the
H200; the cause of this inversion was not further investigated. Similarly, the
Favre Froude analysis is slower on the MI210 than on the CPUs, likely due to
scalar GPU-to-host transfers used to monitor the wave position during the
simulation, whose per-access latency is significantly higher on the ROCm
backend than on CUDA.

The long-term cylindrical dam break
(Section~\ref{sec:long_term_dam_break}), which evolves
$\num{100000000}$ grid points to $t = 300$, completes in approximately
38~minutes on the H200, the only platform with sufficient memory to
accommodate this problem. A simulation of this scale and duration
would not be feasible on a single workstation without a high-end GPU.

Beyond enabling large-scale simulations, the reduced turnaround times are
equally valuable during code development, where rapid iteration and debugging
of experiments such as the manufactured solution convergence study would be
impractical without GPU-accelerated execution.

\begin{table}[htb]
  \centering
  \caption{Approximate execution times (in seconds) for all numerical
    experiments, including analysis and plotting.
    Entries marked ``OOM'' indicate an out-of-memory failure.}
  \label{tab:execution_times}
  \sisetup{round-mode=places, round-precision=0}
  \begin{tabular}{l S[table-format=6.1] S[table-format=6.1] S[table-format=6.1] S[table-format=4.1]}
    \toprule
    {Experiment} & {CPU i7 (x86)} & {CPU M4 (ARM)} & {AMD MI210} & {NVIDIA H200} \\
    \midrule
    Hyperbolic Soliton (Fig.~\ref{fig:convergence_solitary_wave})         & 7.6      & 2.7      & 5.6    & 14.8   \\
    1D Solitary Wave (Fig.~\ref{fig:convergence_1d_x})                    & 63.4     & 28.0     & 114.3  & 44.7   \\
    Manufactured solution (Fig.~\ref{fig:man_solution_convergence})       & 10854.4  & 2962.3   & 170.7  & 237.0  \\
    Energy conservation (Fig.~\ref{fig:partial_energy_con})               & {OOM}    & {OOM}    & 691.9  & 384.1  \\
    Dingemans (Fig.~\ref{fig:dingemans_results})                          & 1474.9   & 288.7    & 221.7  & 19.9   \\
    Head-on collision (Fig.~\ref{fig:colliding_waves})                    & 12.0     & 3.6      & 4.6    & 1.6    \\
    Riemann problem (Fig.~\ref{fig:riemann_problem})                      & 15.3     & 4.8      & 8.6    & 3.2    \\
    Favre waves (Fig.~\ref{fig:Favre_waves})                              & 11.8     & 4.6      & 13.0   & 2.8    \\
    Favre waves (Fig.~\ref{fig:Froude_numbers})                           & 140.2    & 40.8     & 330.4  & 9.4    \\
    Reflecting wave, $A = 0.075$ (Fig.~\ref{fig:reflecting_wave_A_0.075}) & 5.1      & 2.5      & 8.3    & 1.8    \\
    Reflecting wave, $A = 0.65$ (Fig.~\ref{fig:reflecting_wave_A_0.65})   & 1.4      & 1.1      & 7.3    & 0.6    \\
    Gaussian obstacle (Fig.~\ref{fig:busto_gaussian})                     & 1174.9   & 268.7    & 30.0   & 12.7   \\
    Semi-circular shoal, case (a) (Fig.~\ref{fig:semi_shoal_case_1})      & 5444.4   & 1218.1   & 162.1  & 53.9   \\
    Semi-circular shoal, case (b) (Fig.~\ref{fig:semi_shoal_case_2})      & 5007.1   & 1010.3   & 118.5  & 46.3   \\
    Semi-circular shoal, case (c) (Fig.~\ref{fig:semi_shoal_case_3})      & 7012.2   & 1466.8   & 167.7  & 65.6   \\
    Cylindrical dam break (Fig.~\ref{fig:dam_break_cylindric})            & 3641.4   & 849.6    & 89.1   & 34.9   \\
    Square dam break (Fig.~\ref{fig:dam_break_square})                    & 3556.3   & 882.6    & 85.5   & 40.6   \\
    Long-term dam break (Fig.~\ref{fig:dam_break_long_over_time}--\ref{fig:dam_break_long_soliton_fit}) 
                                                                          & {OOM}    & {OOM}    & {OOM}  & 2270.0 \\
    \bottomrule
  \end{tabular}
\end{table}

\section{Summary and Conclusions}
\label{sect:conclusions}

In this work, we developed structure-preserving numerical schemes for the
two-dimensional hyperbolic approximation of the SGN equations with variable
bathymetry. Using the framework of summation-by-parts operators and split
forms, we derived semi-discretizations that conserve both total mass and total
energy at the discrete level for both periodic and reflecting boundary
conditions (Section~\ref{sect:energy-conserving}).
The implementation in Julia enabled efficient simulations on both CPU
and GPU platforms and is available in our reproducibility
repository~\cite{repo}.

We tested our numerical methods using a comprehensive set of cases. Convergence
studies using solitary wave solutions of the SGN equations
(Section~\ref{sec:convergence_1d_soliton}) and manufactured solutions
(Section~\ref{sec:manufactured_solution}) confirmed second-order spatial
accuracy for all variables. We numerically verified that the spatial
semi-discretization conserves energy to machine precision, as predicted by the
theoretical analysis. Comparisons with experimental data and
numerical reference solutions from the literature demonstrated good agreement
across a wide range of benchmarks, including wave propagation over a submerged
bar (Section~\ref{sec:dingemans}), head-on collision of solitary waves
(Section~\ref{sec:colliding_waves}), Riemann problems
(Section~\ref{sec:riemann_problem}), Favre waves
(Section~\ref{sec:Favre_waves}), reflection from a vertical wall
(Section~\ref{sec:reflecting_wall}), solitary wave over a Gaussian obstacle
(Section~\ref{sec:gaussian_obstacle}), periodic waves over a semi-circular
shoal (Section~\ref{sec:semi_shoal}), and two-dimensional dam break problems
(Sections~\ref{sec:dam_break} and~\ref{sec:long_term_dam_break}). Performance benchmarks
(Section~\ref{sec:performance}) confirmed that GPU acceleration is essential for
large-scale simulations, with substantial speedups on modern accelerators.

The present study also revealed the inherent limitations of the SGN framework.
In the Favre wave simulations (Section~\ref{sec:Favre_waves}) with large Froude
numbers ($\text{Fr} \gtrsim 1.25$), the numerical solution crashed due to wave
steepening that led to breaking, a physical regime not captured by the SGN
equations without wave-breaking modeling.

Several avenues exist for extending this work. Systematic investigations of
long-time soliton propagation errors using relaxation Runge--Kutta (RRK) schemes
would be valuable. Additionally, split-form discretizations that conserve the
total momentum for flat bathymetry and periodic boundary conditions could be investigated.

\appendix

\section*{Acknowledgments}

CW, VM, and HR were supported by the Deutsche Forschungsgemeinschaft
(DFG, German Research Foundation, project numbers 513301895 and 528753982
as well as within the DFG priority program SPP~2410 with project number 526031774)
and the Daimler und Benz Stiftung (Daimler and Benz foundation,
project number 32-10/22).
MR is a member of the Cardamom team, Inria at University of Bordeaux.

\section*{Data Availability}

The data that support the findings of this study are openly available in our reproducibility repository at
\href{https://github.com/cwittens/2025_Hyp_SGN_2D}{github.com/cwittens/2025\_Hyp\_SGN\_2D} and archived at \href{https://doi.org/10.5281/zenodo.18175148}{doi.org/10.5281/zenodo.18175148}.

\printbibliography

\end{document}